\def\beq{\begin{equation}}
\def\eeq{\end{equation}}
\def\ba{\begin{array}}
\def\ea{\end{array}}
\def\S{\mathbb S}
\def\R{\mathbb R}
\def\nn{\nonumber}
\def\la{\langle}
\def\ra{\rangle}
\def \ds{\displaystyle}
\def \vs{\vspace*{0.1cm}}
\def\a{\alpha}
\def\bee{\begin{eqnarray}}
\def\beee{\begin{eqnarray*}}
\def\eee{\end{eqnarray}}
\def\eeee{\end{eqnarray*}}
\def\nn{\nonumber}
\def\ba{\begin{array}}
\def\ea{\end{array}}
\def\slashii#1{\setbox0=\hbox{$#1$}             
   \dimen0=\wd0                                 
   \setbox1=\hbox{\sl/} \dimen1=\wd1            
   \ifdim\dimen0>\dimen1                        
      \rlap{\hbox to \dimen0{\hfil\sl/\hfil}}   
      #1                                        
   \else                                        
      \rlap{\hbox to \dimen1{\hfil$#1$\hfil}}   
      \hbox{\sl/}                               
   \fi}                                         %
\def\slashiii#1{\setbox0=\hbox{$#1$}#1\hskip-\wd0\hbox to\wd0{\hss\sl/\/\hss}}
\newcommand{\A}{{\mathcal A}}
\newcommand{\C}{{\mathbf C}}
\newtheorem{thm}{Theorem}[section]
\newtheorem{lm}[thm]{Lemma}
\newtheorem{prop}[thm]{Proposition}
\theoremstyle{definition}
\newtheorem{rem}[thm]{Remark}
\newtheorem{df}[thm]{Definition}
\theoremstyle{remark}
\begin{document}
\pagestyle{plain}
\date{\today}

\title{Vanishing Pohozaev constant and removability of singularities}

\thanks{}
\author{J\"urgen Jost, Chunqin Zhou, Miaomiao Zhu}

\address{Max Planck Institute for Mathematics in the Sciences, Inselstr. 22, D-04013 Leipzig, Germany}
\email{jjost@mis.mpg.de}

\address{Department of Mathematics and MOE-LSC, Shanghai Jiaotong University, 200240, Shanghai, China}
\email{cqzhou@sjtu.edu.cn}

\address{School of Mathematical Sciences,  Shanghai Jiao Tong University,  Dongchuan Road 800,  Shanghai, 200240, China}
\email{mizhu@sjtu.edu.cn}

\begin{abstract}{Conformal invariance of two-dimensional variational problems is a condition known to enable a blow-up analysis of solutions and to deduce the removability of singularities. In this paper, we identify another condition that is not only sufficient, but also necessary for such a removability of singularities. This is the validity of the Pohozaev identity.
In situations where such an identity fails to hold, we introduce a new quantity, called the {\it  Pohozaev constant}, which on one hand measures the extent to which
the Pohozaev identity fails and on the other hand provides a characterization of the singular behavior of a solution at an isolated singularity.
We apply this to the blow-up analysis for super-Liouville type equations on Riemann surfaces with conical singularities, because in the presence of such singularities,
 conformal invariance no longer holds and a local singularity is in general non-removable unless the Pohozaev constant is vanishing. }
\end{abstract}

\maketitle

{\bf Keywords:} Super-Liouville equation, Pohozaev identity, Pohozaev constant, Conical singularity, Blow-up.

{\bf 2010 Mathematics Subject Classification:} 35J60, 35A20, 35B44.

\

\section{Introduction}\label{intro}
Many variational problems of profound interest in geometry and physics are borderline cases of the Palais-Smale condition, and standard theory does not apply to deduce the existence and to control the behavior of solutions. One needs additional ingredients and tools. For two-dimensional problems, like harmonic maps from Riemann surfaces (or in physics, the nonlinear sigma model), minimal and prescribed mean curvature surfaces in Riemannian manifolds, pseudoholomorphic curves, Liouville type problems as occurring for instance in prescribing the Gauss curvature of a surface, Ginzburg-Landau and Toda type problems, and as inspired by quantum field theory and super string theory, Dirac-harmonic maps and super-Liouville equations, etc., it turned out that conformal invariance is a key property that enables a successful analysis. The fundamental technical aspect of all such problems is the existence of bubbles, that is, the concentration of solutions at isolated points. Since the
 fundamental work of Sacks-Uhlenbeck \cite{SU} and Wente \cite{W}, we know that even when such a bubble splits off, the remaining solution is smooth, that is, can be extended through the point where the bubble singularity had been developing. This is called blow-up analysis, and it depends on a precise characterization of the bubble type solutions. In other words, conformal invariance  is a sufficient condition for such a blow-up analysis. In technical terms, conformal invariance produces a holomorphic quadratic differential. For harmonic map type problems, it is well known that finiteness of the energy functional in question implies that that differential is in $L^1$.
This then yields important estimates. For (super-)Liouville equations,
the energy functional and the holomorphic quadratic differential are defined in a different way.
Finiteness of the energy is not sufficient to get the $L^1$ bound of that differential and
hence this is an extra assumption leading to the removability of local singularities (Prop 2.6, \cite{JWZZ1}).

 It turns out, however, that some important problems in the class mentioned no longer satisfy conformal invariance. An example that we shall investigate in this paper are (super-)Liouville equations  on surfaces with conical singularities. Another example, which we shall treat in a subsequent paper, is the super-Toda system. Also, some inhomogeneous lower order terms in a problem can destroy conformal invariance.

Thus, in order to both understand the scope of the blow-up analysis in general and to handle some concrete two-dimensional geometric variational problems, we have searched for a condition that is not only sufficient, but also necessary for the blow-up analysis. The condition that we have identified is the Pohozaev identity. This condition is already known to play a crucial role in geometric analysis (see for instance \cite{St}),
but what is new here is that we can show that this identity by itself suffices for the blow-up analysis.
In fact, there are situations where this identity fails to hold. In order to handle these more complicated cases,
we introduce a new quantity that is associated to a solution, called the {\it  Pohozaev constant}. By definition, this quantity measures the extent to which
the Pohozaev identity fails. In other words, that identity holds iff the Pohozaev constant vanishes. On the other hand, it turns out
 that this quantity also provides a characterization of the singular behavior of a solution at an isolated singularity.
As already mentioned, we demonstrate the scope of this strategy at a rather difficult and subtle example, the (super-)Liouville equation  on surfaces with conical singularities. We hope that the general scheme will become clear from our treatment of this particular example.

Thus, in order to get more concrete, we now introduce that example.
The classical Liouville functional  for a real-valued function $u$ on a smooth Riemann surface $M$ with conformal metric $g$ is
\begin{equation*}
 E\left( u \right) =\int_{M}\{\frac 12 \left|
\nabla u\right| ^2+K_gu -e^{2u}\}dv,
\end{equation*}
where  $K_g$ is the Gaussian curvature of $M$.
 The Euler-Lagrange equation for $E(u)$ is the Liouville equation
\begin{equation*}
-\Delta_g u = \ds\vs 2e^{2u}  -K_g.
\end{equation*}
Liouville \cite{L} studied this equation in the plane, that is, for $K_g=0$.
The Liouville equation comes up in many problems of complex analysis
and differential geometry of Riemann surfaces, for instance
the prescribing curvature problem. The interplay between the
geometric and analytic aspects makes the Liouville equation
mathematically very interesting.

It also occurs naturally in string theory as
discovered by Polyakov \cite{Po2}, from the gauge anomaly in
quantizing the string action. There then also is a natural
supersymmetric version of the Liouville functional and equation,
coupling the bosonic scalar field to a fermionic spinor field. It
turns out, however, that we also obtain a very interesting
mathematical structure if we consider ordinary instead of
fermionic (Grassmann valued) spinor fields. Therefore, in \cite{JWZ}, we have introduced the {\bf super-Liouville
  functional}, a conformally
invariant functional that couples a real-valued function and  a spinor
$\psi$ on a closed smooth  Riemannian surface $M$ with conformal metric $g$ and a
spin structure,
\begin{equation*}
E\left( u,\psi \right) =\int_{M}\{\frac 12 \left| \nabla u\right|
^2+K_gu+\left\langle (\slashiii{D}+e^u)\psi ,\psi \right\rangle
-e^{2u}\}dv.
\end{equation*}
The Euler-Lagrange system for $E(u,\psi )$ is
\begin{equation*}
\left\{
\begin{array}{rcl}
-\Delta_g u &=& \ds\vs 2e^{2u}-e^u\left\langle \psi ,\psi
\right\rangle  -K_g\qquad
\\
\slashiii{D}_g\psi &=&\ds  -e^u\psi
\end{array} \text {in } M.
\right.
\end{equation*}
The analysis  of classical Liouville type equations was developed in  \cite{BM, LS, Li, BCLT, JLW}, and the corresponding analysis  for super-Liouville equations  in
\cite{JWZ, JWZZ1, JWZZ2, JZZ, JZZ1}. In particular,   the complete blow-up theory for sequences of solutions was established,
including the energy identity for the spinor part, the blow-up value at blow-up points and the profile for a sequence of solutions at the blow-up points. For results by physicists about super-Liouville equations, we refer to \cite{Pr, ARS, FH}.

\

In this paper, as an application and a test of our general scheme, we shall study super-Liouville equations on surfaces with conical singularities and establish the geometric and analytic properties for this system.
For this purpose, let us first recall the definition of surfaces with conical singularities, following \cite{T1}. A conformal
metric $g$ on a Riemannian surface $M$ without
boundary has a conical singularity of order $\alpha$ (a real number
with $\alpha>-1$) at a point $p\in M$ if in some neighborhood of $p$
$$g=e^{2u}|z-z(p)|^{2\alpha}|dz|^2$$
where $z$ is a coordinate of $M$ defined in this neighborhood
and $u$ is smooth away from $p$ and continuous at $p$. The point
$p$ is then said to be a conical singularity of {\it angle}
$\theta=2\pi(1+\alpha)$. For example, a (somewhat idealized) American football has two singularities of equal angle, while
a teardrop has only one singularity. Both these examples correspond
to the case $-1<\alpha <0$; in case $\alpha >0$, the angle is larger
than $2\pi$, leading to a different geometric picture. Such
singularities also appear in orbifolds and branched coverings. They
can also describe the ends of complete Riemann surfaces with
finite total curvature. If $(M, g)$ has conical
singularities of order $\alpha_1, \alpha_2, \cdots, \alpha_m$ at
$q_1, q_2, \cdots, q_m$, then $g$ is said to represent the
divisor $\A= \Sigma^m_{j=1}\alpha_j q_j$. Importantly, the presence of such conical singularities destroys conformal invariance, because the conical points are different from the regular ones.

\

Let $(M,\A, g)$ be a compact Riemann surface (without boundary) with conical singularities of divisor $\A $ and with a spin structure.
Associated to $g$, one can define the gradient $\nabla$ and the Laplacian operator $\Delta $ in the usual way.
We consider the {\bf super-Liouville functional} on $M$, a conformally
invariant functional that couples a real-valued function $u$ and a spinor
$\psi$ on $M$
\begin{equation*}
E\left( u,\psi \right) =\int_{M}\{\frac 12 \left| \nabla u\right|
^2+K_gu+\left\langle (\slashiii{D}+e^u)\psi ,\psi \right\rangle
-e^{2u}\}dv_g.
\end{equation*}
The Euler-Lagrange system for $E(u,\psi )$ is
\begin{equation}
\left\{
\begin{array}{rcl}
-\Delta_g u &=& \ds\vs 2e^{2u}-e^u\left\langle \psi ,\psi
\right\rangle  -K_g\qquad
\\
\slashiii{D}_g\psi &=&\ds  -e^u\psi
\end{array} \text {in } M\backslash \{q_1,q_2,\cdots, q_m\}.
\right.   \label{eq-2}
\end{equation}
When $\psi$ vanishes, we obtain the classical Liouville equation, or the prescribing curvature equation on M with conical singularites
(see \cite{T1, CL1}). In \cite{bt, BT1, B, Ta, BCLT, BaMo}, the blow-up theory of the following Liouville type equations with singular data was systematically studied:
$$
-\Delta_g u=\lambda \frac{Ke^u}{\int_M K e^{u}dg}-4\pi(\Sigma_{j=1}^{m}\alpha_j\delta_{q_j}- f),
$$ where $(M,g)$ is a smooth surface and the singular data appear in equation. In this paper, we aim to provide an analytic foundation for the system (\ref{eq-2}).

The local super-Liouville type system (which is deduced in the Section \ref{local}) we shall study  is the following:
\begin{equation}\label{Eq-LL}
\left\{
\begin{array}{rcl}
-\Delta u(x) &=& 2V^2(x)|x|^{2\alpha}e^{2u(x)}-V(x)|x|^{\alpha}e^{u(x)}|\Psi|^2  \quad\\
\slashiii{D}\Psi &=& -V(x)|x|^{\alpha}e^{u(x)}\Psi
\end{array} \text { in } B_{r}(0),
\right.
\end{equation}
Here $\alpha \geq 0$, $V(x)$ is a $C^{1,\beta}$ function satisfying $0< a\leq V(x)\leq b$ and $B_r=B_r(0)$ is a disc in $\R^2$.
We also assume that $(u,\Psi)$ satisfy the following energy condition:
\begin{equation}\label{Co-LL}
\int_{B_r(0)}|x|^{2\alpha}e^{2u}+|\Psi|^4dx<+\infty.
\end{equation}

Our first result is the following Brezis-Merle type concentration compactness:

\begin{thm}\label{mainthm}
Let $(u_n,\Psi _n)$ be a sequence of solutions satisfying
\begin{equation} \label{Eq-Sn}
\left\{
\begin{array}{rcl}
-\Delta u_n(x) &=& 2V^2(x)|x|^{2\alpha_n}e^{2u_n(x)}-V(x)|x|^{\alpha_n}e^{u_n(x)}|\Psi_n|^2  \quad\\
\slashiii{D}\Psi_n &=& -V(x)|x|^{\alpha_n}e^{u_n(x)}\Psi_n \end{array} \text {in } B_{r},
\right.
\end{equation}
with the energy condition
\begin{equation}\label{Eq-Cn}
\int_{B_r}|x|^{2\alpha_n}e^{2u_n}dx<C,\text{ and }\int_{B_r}\left| \Psi _n\right|
^4dx<C.
\end{equation}
for some constant $C>0$. Assume that
\begin{itemize}
\item[i)]  $\alpha_n \in \R ^+, \alpha_n \rightarrow \alpha$ with $\alpha\geq 0$,
\item[ii)] $V \in C^{1,\beta}(B_r), 0< a\leq V(x)\leq b < + \infty$.
\end{itemize}

Define
\[\begin{array}{rcl}
\Sigma _1&=&\left\{ x\in B_r,\text{ there is a sequence
}y_n\rightarrow x\text{ such that }u_n(y_n)\rightarrow +\infty
\right\}\\
\Sigma _2&=&\left\{ x\in B_r,\text{ there is a sequence
}y_n\rightarrow x\text{ such that }\left| \Psi _n(y_n)\right|
\rightarrow +\infty \right\}
.\end{array}
\]
Then, we have $\Sigma_2\subset\Sigma_1$. Moreover,
$(u_n,\Psi _n)$ admits a subsequence,
 still denoted by $(
u_n,\Psi _n),$ satisfying

\begin{enumerate}
\item[a)] $\Psi _n$ is bounded in $%
L_{loc}^{\infty} (B_r\backslash \Sigma _2)$ .

\item[b)]  For $u_n$, one of the following alternatives holds:
\begin{enumerate}
\item[i)]  $u_n$ is bounded in $L^{\infty}_{loc} (B_r).$

\item[ii)]  $u_n$ $\rightarrow -\infty $ uniformly on compact subsets of $B_r$.

\item[iii)]  $\Sigma _1$ is finite, nonempty and either
\begin{equation*}
u_n\text{ is bounded in }L_{loc}^\infty (B_r\backslash \Sigma _1)
\end{equation*}
or
\begin{equation*}
u_n\rightarrow -\infty \text{ uniformly on compact subsets of
}B_r\backslash \Sigma _1.
\end{equation*}
\end{enumerate}
\end{enumerate}
\end{thm}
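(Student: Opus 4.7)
The plan is to adapt the classical Brezis--Merle concentration-compactness argument \cite{BM} to this coupled singular system, handling the spinor by a bootstrap that feeds off the estimates for $u_n$. I would organise the proof in three steps: (i) a weighted exponential estimate for $u_n$ on small balls, which determines $\Sigma_1$ as the set of points where the local weighted mass $\int_{B_\delta}|x|^{2\alpha_n}e^{2u_n}dx$ does not become small as $\delta\to 0$; (ii) a Dirac-elliptic bootstrap showing $\Sigma_2\subset\Sigma_1$ and giving part (a); (iii) a Harnack-type dichotomy to obtain the trichotomy in (b).

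First, both nonlinear terms in the $u_n$-equation are uniformly bounded in $L^1(B_r)$: the first by assumption \eqref{Eq-Cn}, and the second by Cauchy--Schwarz,
\[
\int_{B_r}V|x|^{\alpha_n}e^{u_n}|\Psi_n|^2\,dx\le b\Bigl(\int_{B_r}|x|^{2\alpha_n}e^{2u_n}\,dx\Bigr)^{1/2}\Bigl(\int_{B_r}|\Psi_n|^4\,dx\Bigr)^{1/2}\le C.
\]
Fix $x_0\in B_r$ and a small ball $B_\delta(x_0)\subset B_r$, and split $u_n=u_n^{(1)}+u_n^{(2)}$ with $-\Delta u_n^{(1)}=-\Delta u_n$ in $B_\delta$, $u_n^{(1)}=0$ on $\partial B_\delta$, and $u_n^{(2)}$ harmonic. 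The Brezis--Merle inequality applied to $u_n^{(1)}$ gives, for some $p>1$,
\[
\int_{B_\delta}e^{p|u_n^{(1)}|}\,dx\le C,
\]
provided $\|\Delta u_n\|_{L^1(B_\delta)}$ lies suitably below the critical threshold $4\pi$. Defining $\Sigma_1$ to be the set of $x_0\in B_r$ such that for every $\delta>0$, $\limsup_n\int_{B_\delta(x_0)}|x|^{2\alpha_n}e^{2u_n}\,dx\ge\varepsilon_0$ for a fixed small $\varepsilon_0$, the finiteness of $\Sigma_1$ is immediate from the uniform energy bound, and at each $x_0\notin\Sigma_1$ the above argument delivers uniform $L^s_{loc}$ bounds on $|x|^{2\alpha_n}e^{2u_n}$ for some $s>1$, once the harmonic part $u_n^{(2)}$ is controlled through its mean value.

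For (ii), at $x_0\notin\Sigma_1$ the estimates above imply that $V|x|^{\alpha_n}e^{u_n}$ is uniformly bounded in some $L^q_{loc}$ with $q>2$. Applied to the Dirac equation $\slashiii{D}\Psi_n=-V|x|^{\alpha_n}e^{u_n}\Psi_n$, together with the $L^4$ bound on $\Psi_n$, standard $L^p$ regularity for the Dirac operator on $\mathbb R^2$ gives $\Psi_n$ bounded in $W^{1,s}_{loc}$ for some $s>2$, and hence in $L^\infty_{loc}$ by Sobolev embedding. This proves $\Sigma_2\subset\Sigma_1$, and part (a) then follows from the closedness of $\Sigma_2$ together with this local boundedness.

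Finally, for the trichotomy (b), inserting the $L^s$ bound on the right-hand side of the $u_n$-equation away from $\Sigma_1$ into elliptic estimates yields that $u_n-c_n$ is uniformly bounded in $L^\infty_{loc}(B_r\setminus\Sigma_1)$ for an appropriate normalising sequence $c_n$; a Harnack / mean-value comparison on overlapping balls forces the alternative ``$c_n$ bounded'' versus ``$c_n\to-\infty$'' to be the same on each connected component of $B_r\setminus\Sigma_1$, and since $B_r\setminus\Sigma_1$ is connected (being $B_r$ minus finitely many points) we obtain exactly the stated cases. The main technical obstacle is the weighted Brezis--Merle step at the conical point $x=0$: away from $x=0$ the weight $|x|^{2\alpha_n}$ is uniformly bounded above and below and the classical scheme applies verbatim, but at the singularity one must identify the correct scaling-invariant quantity, namely $\int|x|^{2\alpha_n}e^{2u_n}$, as the one controlling the Brezis--Merle threshold, and verify that the constants can be taken uniform in $n$ as $\alpha_n\to\alpha$.
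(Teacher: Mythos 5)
Your plan—the Brezis--Merle decomposition $u_n=u_n^{(1)}+u_n^{(2)}$ with a small-energy regularity criterion defining $\Sigma_1$, followed by a Dirac $L^p$ bootstrap for $\Sigma_2\subset\Sigma_1$ and part (a), and a Harnack/dichotomy argument for part (b)—is exactly the strategy of the paper, which reduces Theorem~\ref{mainthm} to the small-energy Lemma~\ref{lmuni} and the scheme of Theorem~5.1 in \cite{JWZ}. You correctly flag the conical point $x=0$ as where the argument needs a new idea, but you mislocate the difficulty and do not close the gap.

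The Brezis--Merle threshold itself is not the issue: the inequality applies to $\|\Delta u_n\|_{L^1(B_\delta)}$, and you have already shown at the outset that both source terms are in $L^1$ uniformly in $n$, so $\int_{B_\delta}e^{p|u_n^{(1)}|}\,dx\le C$ holds once $\int_{B_\delta}|x|^{2\alpha_n}e^{2u_n}$ is small, with no change to the critical constant. The genuine obstacle is in the sentence you pass over with ``once the harmonic part $u_n^{(2)}$ is controlled through its mean value.'' To use the mean-value inequality $\|u_n^{(2),+}\|_{L^\infty(B_{\delta/2})}\le C\|u_n^{(2),+}\|_{L^1(B_\delta)}$ you need, via $u_n^{(2),+}\le u_n^{+}+|u_n^{(1)}|$, an $n$-uniform $L^1$ bound on $u_n^{+}$. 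In the smooth case this is free from $2u_n^{+}\le e^{2u_n}$; with the weight, the energy bound controls only $\int|x|^{2\alpha_n}e^{2u_n}$, which gives no direct control on $\int u_n^{+}$ near $0$. This is precisely what the paper's Lemma~\ref{lmuni} (and Proposition~\ref{prop-a}) supplies, by the H\"older trick borrowed from \cite{bt}: choose $t>0$ with $\int_{B_r}|x|^{-2t\alpha_n}\,dx\le C$ uniformly in $n$ (possible since $\alpha_n\to\alpha\ge 0$), set $s=\frac{t}{t+1}\in(0,1)$, and estimate
\[
2s\int_{B_r}u_n^{+}\,dx\ \le\ \int_{B_r}e^{2su_n}\,dx\ \le\ \Bigl(\int_{B_r}|x|^{2\alpha_n}e^{2u_n}\,dx\Bigr)^{s}\Bigl(\int_{B_r}|x|^{-2t\alpha_n}\,dx\Bigr)^{1-s}\ \le\ C.
\]
You should also note that, with the small-energy criterion in hand, one must verify that the set of concentration points (your definition of $\Sigma_1$) agrees with the set of blow-up points of $u_n$ in the statement; that identification is exactly what the small-energy lemma provides. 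With the $L^1$ bound on $u_n^{+}$ supplied, the rest of your outline—the resulting $L^q_{loc}$ bound on $V|x|^{\alpha_n}e^{u_n}$ for some $q>2$ away from $\Sigma_1$, the $W^{1,s}\hookrightarrow L^\infty$ bootstrap for $\Psi_n$, and the trichotomy via harmonicity and connectedness of $B_r\setminus\Sigma_1$—goes through and matches the paper's argument.
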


\

The proof of this concentration result does not yet need the Pohozaev identity. But we shall then proceed to the subtler aspects of the blow-up analysis, and for that, the Pohozaev identity will play a crucial role. We shall first show that global singularities can be removed, that is, an entire solution on the plane can be conformally extended to the sphere. In the subsequent analysis, we shall show  that in the blow-up process, no energy will be lost, neither in the Liouville part $u_n$ nor in the spinor part $\Psi_n$.
The technically longest part of our scheme (see Section \ref{ener}) consists in exploring the blow-up behavior of (\ref{Eq-Sn}) and (\ref{Eq-Cn}) at each blow-up point, to show that the energy identity holds for the spinor parts $\Psi_n$.

\begin{thm}\label{engy-indt}
Notations and assumptions as in Theorem \ref{mainthm}. Then there are finitely many bubbling solutions of (\ref{Eq-LL}) and (\ref{Co-LL}) on $\R^2$
 with $\alpha\geq 0$ and  $V\equiv const $: $(u^{i,k},\Psi^{i,k})$, $i=1,2,\cdots , l;
k=1,2,\cdots, L_i$, all of which can be conformally extended to $S^2$, such that, after selection of a subsequence, $\Psi_n$
converges in $C_{loc}^{2}$ to some $\Psi$ on $B_r(0)\backslash \Sigma_1$
and the following energy identity holds: \begin{equation*}
\lim_{n\rightarrow
\infty}\int_{B_r(0)}|\Psi_n|^4dv=\int_{B_r(0)}|\Psi|^4dv+\sum_{i=1}^{l}
\sum_{k=1}^{L_i}\int_{S^2}|\Psi^{i,k}|^4dv. \end{equation*}
\end{thm}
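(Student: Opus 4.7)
The strategy follows the standard bubbling / neck-analysis scheme, with the Pohozaev identity from the preceding sections playing the essential role at the removability step. By Theorem \ref{mainthm} I reduce to case b)(iii), in which $\Sigma_1$ is finite and nonempty and $\Psi_n$ is bounded in $L^\infty_{loc}(B_r\setminus\Sigma_1)$. Elliptic regularity for the Dirac equation $\slashiii{D}\Psi_n = -V(x)|x|^{\alpha_n}e^{u_n}\Psi_n$, whose right-hand side is locally uniformly bounded on $B_r(0)\setminus\Sigma_1$, yields $C^2_{loc}$ convergence $\Psi_n\to\Psi$ there. Since the claimed identity is local, it suffices to localize at a single $p\in\Sigma_1$, account for the lost spinor $L^4$-energy by bubbles, and iterate if necessary.

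To extract the first bubble at $p$, pick approximate maxima $x_n\to p$ (adjusted for the conical weight when $p=0$) of the associated conformally natural exponent, diverging to $+\infty$, and choose a scale $\lambda_n\to 0$ so that the rescaled pair
\[
\tilde u_n(y):=u_n(x_n+\lambda_n y)+\log\lambda_n,\qquad \tilde\Psi_n(y):=\lambda_n^{1/2}\Psi_n(x_n+\lambda_n y),
\]
is normalized at $y=0$; these scalings preserve $\int|x|^{2\alpha_n}e^{2u_n}\,dx$ and $\int|\Psi_n|^4\,dx$. A routine case analysis on whether $p=0$ or $p\ne 0$, and on whether $x_n/\lambda_n$ stays bounded or diverges, produces in the limit a nontrivial entire solution $(u^{i,1},\Psi^{i,1})$ of (\ref{Eq-LL}) on $\R^2$ with constant $V\equiv V(p)$ and a value of $\alpha\ge 0$ determined by the above case. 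Finiteness of its total energy is inherited from (\ref{Eq-Cn}).

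Next I conformally extend the bubble to $S^2$. Its point at infinity is an a priori singularity; since the bubble is globally defined on $\R^2$ with finite $\int(|x|^{2\alpha}e^{2u}+|\Psi|^4)\,dx$, the Pohozaev identity evaluated on $B_R$ with $R\to\infty$ forces its Pohozaev constant at infinity to vanish (the boundary contribution dies under the finite-energy tail), and the removability theorem of the paper then supplies a smooth conformal extension to $S^2$.

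The hardest step is the neck analysis: writing $A_n(R,\delta):=\{R\lambda_n\le|x-x_n|\le\delta\}$, I must show
\[
\lim_{\delta\to 0}\lim_{R\to\infty}\lim_{n\to\infty}\int_{A_n(R,\delta)}|\Psi_n|^4\,dx=0.
\]
The plan is a dyadic decomposition into annuli $\Omega_j:=\{2^{-j-1}\delta\le|x-x_n|\le 2^{-j}\delta\}$ of unit conformal size. Choosing $R$ large and $\delta$ small eliminates further concentration, so $\|\Psi_n\|_{L^4(2\Omega_j)}$ is uniformly small. Rescaling each $\Omega_j$ to unit scale, applying $L^p$ estimates for the Dirac operator, and exploiting the smallness of $\|V|x|^{\alpha_n}e^{u_n}\|_{L^2}$ in the neck (from the large negativity of $u_n$ there) yields a contraction $\|\Psi_n\|_{L^4(\Omega_j)}^4\le\tau\,\|\Psi_n\|_{L^4(2\Omega_j)}^4$ with $\tau<1$ independent of $j$; iterating over dyadic scales produces exponential decay in $j$ and hence the vanishing of the neck energy. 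If secondary blow-up occurs at $p$, producing further bubbles $(u^{i,k},\Psi^{i,k})$ with $k\ge 2$, I iterate the entire procedure on the rescaled sequence after subtracting the first bubble; finiteness of the total energy forces the bubble tree to terminate after finitely many steps, and summing all contributions yields the stated identity.
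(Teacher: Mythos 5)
Your overall blueprint — reduce to a single isolated blow-up point, rescale to produce a bubble, extend conformally to $S^2$ via removability, and kill the $L^4$-neck energy by a dyadic iteration — is the same skeleton the paper uses. But the step you label "the hardest" is exactly where the argument has a genuine gap, and it is precisely there that the Pohozaev constant must be used again, not just at the conformal-extension step.

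You assert two things that are not available at this stage. First, "choosing $R$ large and $\delta$ small eliminates further concentration, so $\|\Psi_n\|_{L^4(2\Omega_j)}$ is uniformly small"; second, that $\|V|x|^{\alpha_n}e^{u_n}\|_{L^2(\Omega_j)}$ is uniformly small across the dyadic scales because "$u_n$ is largely negative in the neck." The second assertion is essentially circular: that $u_n\to-\infty$ on compact subsets of $B_r\setminus\Sigma_1$ is the content of Theorem~\ref{mainthm1}, which the paper proves \emph{after} and \emph{using} the energy identity; before Theorem~\ref{engy-indt} one only has the dichotomy of Theorem~\ref{mainthm}~b)(iii), and even uniform negativity away from $\Sigma_1$ would not control the neck, since the inner neck radius $R\lambda_n\to 0$ is not in any fixed compact subset of $B_r\setminus\Sigma_1$. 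The first assertion — no intermediate concentration of either $\int|x|^{2\alpha_n}e^{2u_n}$ or $\int|\Psi_n|^4$ at scales between $R\lambda_n$ and $\delta$ — is precisely the paper's Claim~I.1/II.1, and it has a real proof: one argues by contradiction, assumes a dyadic annulus at some intermediate scale $r_n$ carries energy $\geq\epsilon_0$, rescales by $r_n$, and then in the non-degenerate case obtains a limit $(v,\varphi)$ on $\R^2\setminus\{0\}$; the passage from $C(v_n,\varphi_n)=0$ (Pohozaev identity for smooth solutions) to $C(v,\varphi)=(1+\alpha)\beta$, combined with $C(v,\varphi)=\beta^2/4\pi$ and the constraint $\beta\le 2\pi(1+\alpha)$, forces $\beta=0$ and removes the singularity at $0$ (and $\infty$), producing a second bubble and contradicting the one-bubble hypothesis. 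Without this Pohozaev step there is no reason the requisite smallness on every dyadic annulus should hold, and your contraction inequality with uniform $\tau<1$ never gets off the ground. A smaller issue: your rescaling $\tilde u_n(y)=u_n(x_n+\lambda_n y)+\log\lambda_n$ uses the wrong normalization at the conical point; for $\alpha>0$ the correct additive term is $(\alpha_n+1)\log t_n$ with $t_n=\max\{\lambda_n,|x_n|\}$ and $\lambda_n=e^{-u_n(x_n)/(\alpha_n+1)}$, which is what makes $\int|x|^{2\alpha_n}e^{2u_n}$ scale-invariant.
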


\

The essential step in the  proof of Theorem \ref{engy-indt}  is the removability of a local singularity for solutions of (\ref{Eq-LL}) and (\ref{Co-LL}) defined on a punctured disc (see Section \ref{pohozaev}).

In order to see the scope of our result, we point out that,
in general, a local singularity of $(u,\Psi)$ is not removable. For
example, when $\alpha=0$, if we set
\bee\label{example}
u(x)=\log\frac{(2+2\beta)|x|^{\beta}}{1+2|x|^{2+2\beta}},
\eee
then $u$ is a solution of
$$
-\Delta u=2e^{2u},\quad \text{ in } \R^2\backslash\{0\}
$$ where $\beta >-1$. Therefore $(u,0)$ is a solution of (\ref{Eq-LL}) with $\alpha=0$ and with finite energy in
$\R^2\backslash\{0\}$. It is clear that $x=0$ is a local singularity
which is not removable when $\beta\not = 0$.

So, one needs to find some sufficient condition to remove the local singularity. In \cite{JWZZ1},  the authors considered the following simpler case of $\alpha=0$ and $V(x) \equiv1$:
 \begin{equation*}
\left\{
\begin{array}{rcl}
-\Delta u &=& \ds\vs 2 e^{2u}-e^u\left\langle \psi ,\psi
\right\rangle,\\
\slashiii{D}\psi &=&\ds  - e^u\psi.
\end{array}\qquad \text { in } B_{r_0}\setminus \{0\}
\right.
\end{equation*}
In this case, they defined the following quadratic differential
\[
T(z)dz^2=\{(\partial_z u)^2-\partial^{2}_{z}u+\frac 14\langle
\psi,dz\cdot\partial_{\bar z}\psi\rangle +\frac 14\langle
d\bar{z}\cdot\partial_z\psi,\psi\rangle  \}dz^2,
\]
and showed that it is  holomorphic in $B_{r_0}\backslash\{0\}$. Then one observes that $\int_{B_r(0)}|T(z)|dz=+\infty$ for
$(u,0)$ in the above example \eqref{example}. So, in \cite{JWZZ1}, the authors proposed the assumption that $\int_{B_r(0)}|T(z)|dz\leq C$ and showed that this is a sufficient condition
 for the removability of a local singularity. However, in the more general case considered in this paper, namely, when $\alpha>0$ or the coefficient function $V(x)$  is nonconstant,
then we do not have such a holomorphic quadratic differential and the argument in \cite{JWZZ1} does not work. Therefore we need
to develop a new method.

To describe our new method, as it applies to the super-Liouville system, let $(u, \Psi)$ be a solution of (\ref{Eq-LL}) and (\ref{Co-LL}) defined on a punctured disc.
 We define a quantity $C(u, \Psi) \in \R$,
called {\it the Pohozaev constant} associated to  $(u, \Psi)$ (see Definition \ref{poho-const}). We shall show that there is a constant $\gamma < 2\pi (1+\a)$ such that
$$u(x)=- \frac{\gamma}{2\pi}{\rm log} |x| + h, \quad {\rm near }\ 0,$$
where $h$ is bounded near $0$. Moreover, we show that $C(u, \Psi) $ and $\gamma$ satisfy the following relation:
$$C(u, \Psi)=\frac{\gamma^2}{4\pi}.$$
In particular,  we can prove that the local singularity for $(u, \Psi)$ is removable if and only if the associated Pohozaev constant $C(u, \Psi)=0$, which is equivalent to the fact
that the Pohozaev type identity for $(u, \Psi)$ holds  (see Theorem \ref{thm-sigu-move1}).

Looking back to the example \eqref{example} illustrated above, it is easy to see that the Pohozaev constant $C(u,0)=\pi \beta^2 \neq 0$ when $\beta \neq 0$.

\

Moreover, applying our new method to the removability of a local singularity, we shall see in Section \ref{blow} that the energy identity for the spinor will enable us to derive

\begin{thm}\label{mainthm1}
Notations and assumptions as in Theorem \ref{mainthm}. Assume that the blow-up set $\Sigma_1\neq \emptyset$. Then
$$u_n\rightarrow -\infty  \quad \text{ uniformly on compact subsets of }  B_r(0)\setminus \Sigma_1.$$
Furthermore,
$$2V(x)|x|^{2\alpha_n}e^{2u_n}-V(x)|x|^{\alpha_n}e^{u_n}|\Psi_n|^2\rightharpoonup\sum_{x_i\in \Sigma_1}\beta_i\delta_i
$$
in the sense of distributions, and $\beta_i\geq 4\pi$ for $x_i\in \Sigma_1\cap B_r(0)\setminus \{0\}$ and  $\beta_i\geq 4\pi(1+\alpha)$ for $x_i\in \Sigma_1\cap  \{0\}$.
\end{thm}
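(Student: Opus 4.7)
Since $\Sigma_1\neq\emptyset$, Theorem~\ref{mainthm} places us in case b)iii), leaving the two alternatives (A) $u_n$ bounded in $L^\infty_{loc}(B_r(0)\setminus\Sigma_1)$, or (B) $u_n\to-\infty$ uniformly on compact subsets of $B_r(0)\setminus\Sigma_1$. The plan is: first establish the sharp bubble lower bound $\beta_i\geq 4\pi(1+\alpha_{x_i})$, where $\alpha_{x_i}:=\alpha$ if $x_i=0$ and $\alpha_{x_i}:=0$ otherwise; then use it to exclude (A); and finally deduce the weak$^\ast$ convergence.

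For the lower bound, at each $x_i\in\Sigma_1$ I would pick $x_n\to x_i$ with $u_n(x_n)\to+\infty$ (possible since $x_i\in\Sigma_1$) and perform the standard rescaling
\[
\tilde u_n(y)=u_n(x_n+\varepsilon_n y)+(1+\alpha_{x_i})\log\varepsilon_n,\qquad \tilde\Psi_n(y)=\varepsilon_n^{1/2}\Psi_n(x_n+\varepsilon_n y),
\]
with $\varepsilon_n\to 0$ chosen so that $\tilde u_n(0)=0$. The rescaled pair satisfies a system whose coefficients converge to the constant-coefficient form of (\ref{Eq-LL}) with $V\equiv V(x_i)$ and singular weight $|y|^{2\alpha_{x_i}}$, so Theorem~\ref{engy-indt} produces a non-trivial bubble $(u^i,\Psi^i)$ of finite energy that extends conformally to $S^2$. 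An integration of the $u$-equation on $\R^2$, using the asymptotic $u^i(y)\sim -2(1+\alpha_{x_i})\log|y|$ at infinity forced by the conformal extension (which, when $\alpha_{x_i}>0$, places a second conical singularity of the same order at the image of infinity under stereographic projection, so Troyanov's Gauss--Bonnet contributes $2\alpha_{x_i}$ rather than $\alpha_{x_i}$), yields
\[
\int_{\R^2}\bigl(2V(x_i)^2|y|^{2\alpha_{x_i}}e^{2u^i}-V(x_i)|y|^{\alpha_{x_i}}e^{u^i}|\Psi^i|^2\bigr)\,dy=4\pi(1+\alpha_{x_i}).
\]
Combined with the no-energy-loss statement of Theorem~\ref{engy-indt} together with the vanishing-Pohozaev removability of Section~\ref{pohozaev} to rule out leakage of Liouville energy into neck regions, this gives $\beta_i\geq 4\pi(1+\alpha_{x_i})$.

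With this lower bound in hand, I would rule out (A). If $u_n$ were bounded in $L^\infty_{loc}(B_r(0)\setminus\Sigma_1)$, elliptic regularity combined with part (a) of Theorem~\ref{mainthm} yields $u_n\to u_\infty$ and $\Psi_n\to\Psi_\infty$ in $C^2_{loc}$ on that set, and the concentration at $\Sigma_1$ forces
\[
-\Delta u_\infty=2V^2|x|^{2\alpha}e^{2u_\infty}-V|x|^\alpha e^{u_\infty}|\Psi_\infty|^2+\sum_{x_i\in\Sigma_1}\beta_i\delta_{x_i}\qquad\text{in }\D'(B_r(0)).
\]
Standard potential theory then gives $u_\infty(x)=-\tfrac{\beta_i}{2\pi}\log|x-x_i|+O(1)$ near each $x_i$, whence $|x|^{2\alpha_{x_i}}e^{2u_\infty(x)}\sim |x-x_i|^{2\alpha_{x_i}-\beta_i/\pi}$; local integrability at $x_i$, which Fatou applied to (\ref{Eq-Cn}) demands, requires $\beta_i<2\pi(1+\alpha_{x_i})$, contradicting the bubble bound. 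Hence (B) holds, and both $e^{2u_n}$ and $e^{u_n}|\Psi_n|^2$ tend to zero uniformly on compact subsets of $B_r(0)\setminus\Sigma_1$ (the latter via the $L^\infty_{loc}$ bound on $\Psi_n$ from Theorem~\ref{mainthm}(a)); combined with the uniform $L^1$ bound (\ref{Eq-Cn}) this yields the announced distributional convergence to $\sum_i \beta_i\delta_{x_i}$.

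The main obstacle is the bubble mass computation: a naive one-cone Gauss--Bonnet gives only $2\pi(2+\alpha)$, and it is essential to recognize that the conformal extension of a bubble at the conical point introduces a second cone of the same order at infinity, so that the correct integral equals $4\pi(1+\alpha)$. Equally delicate is the control of intermediate neck scales when several bubbles form at a single $x_i$, which is precisely what the Pohozaev-constant analysis of Section~\ref{pohozaev} (together with the energy identity of Theorem~\ref{engy-indt}) is designed to handle.
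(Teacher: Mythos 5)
Your strategy differs genuinely from the paper's. The paper assumes case (A) (that $u_n$ stays locally bounded away from $\Sigma_1$), then exploits the vanishing Pohozaev constant for the smooth solutions $(u_n,\Psi_n)$: passing to the limit gives the relation $C(u,\Psi)=(1+\alpha)\beta$ with $\beta$ the concentrated mass at $p$, which combined with $C(u,\Psi)=\beta^2/(4\pi)$ (from Prop.\ \ref{sigu-move1}) and $\beta\le 2\pi(1+\alpha)$ (integrability) forces $\beta=0$, so the limit $(u,\Psi)$ is smooth with arbitrarily small local energy near $p$. The contradiction is then obtained by integrating $-\Delta u_n$ over a fixed small ball and using $C^1$ convergence on the boundary to compare with the bubble mass (\eqref{almost2pi}). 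Your route instead seeks a bubble lower bound $\beta_i\geq 4\pi(1+\alpha_{x_i})$ first and compares it to integrability $\beta_i<2\pi(1+\alpha_{x_i})$, which requires establishing the distributional limit PDE with delta sources, a step the paper deliberately avoids.

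There is a genuine gap in your bubble lower bound at $x_i=0$. The rescaling $\tilde u_n(y)=u_n(x_n+\varepsilon_n y)+(1+\alpha)\log\varepsilon_n$ only reproduces the weight $|y|^{2\alpha}$ in the limit if $|x_n|/\varepsilon_n$ stays bounded (the paper's Case I). But the max point $x_n$ can satisfy $|x_n|/\varepsilon_n\to\infty$ (the paper's Case II, handled explicitly in Sections \ref{ener} and \ref{blow}); then $|x_n+\varepsilon_n y|^{2\alpha_n}$ does not rescale to $|y|^{2\alpha}$ but to a normalized constant, and the resulting bubble solves the \emph{non-singular} super-Liouville system with mass $4\pi$, not $4\pi(1+\alpha)$. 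For $\alpha>1$ the inequality $4\pi<2\pi(1+\alpha)$ means your bound $\beta_0\geq 4\pi$ is consistent with the integrability constraint $\beta_0<2\pi(1+\alpha)$, so case (A) is not excluded and the argument fails. The Pohozaev-constant computation is exactly what bypasses this, since it yields $\beta(\beta-4\pi(1+\alpha))=0$ independently of which type of bubble forms. Two lesser gaps: the asymptotic $u^i\sim -2(1+\alpha)\log|y|$ is not forced by the conformal extension — logically the conformal extension is \emph{derived from} $d=4\pi(1+\alpha)$, which comes from the Pohozaev identity (Prop.\ \ref{asy}), so your Gauss--Bonnet count runs the argument in reverse; and passing to $-\Delta u_\infty=f_\infty+\sum\beta_i\delta_{x_i}$ in $\mathcal{D}'(B_r)$ needs a uniform $W^{1,q}_{loc}$ or $L^1_{loc}$ bound on $u_n$, which you do not supply.
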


To investigate further the blow-up behavior of a sequence of solutions of (\ref{Eq-Sn}) and (\ref{Eq-Cn}), let us
define the blow-up value at a blow-up point $p \in \Sigma_1$ as follows:
\bee \label{mp}
m(p)=\lim_{\rho\rightarrow 0}\lim_{n\rightarrow \infty}\int_{B_\rho(p)}(2V^2(x)|x|^{2\alpha_n}e^{2u_n}-V(x)|x|^{\alpha_n}e^{u_n}|\Psi_n|^2)dx.
\eee

In Section \ref{value}, we shall then obtain

\begin{thm}\label{BV}
Notations and assumptions as in Theorem \ref{mainthm}. Assume that the blow-up set $\Sigma_1\neq \emptyset$. Let $p\in \Sigma_1$ and assume that $p$ is the only blow-up point in $\bar{B}_{\rho_0}(p)$ for some small $\rho_0>0$. If
\begin{equation}\label{bc}
 \max_{\partial B_{\rho_0}(p)}u_n-\min_{\partial B_{\rho_0}(p)}u_n\leq C,\\
\end{equation}
then the blow-up value $m(p)=4\pi$ when $p\neq 0$ and $m(p)=4\pi(1+\alpha)$ when $p=0$.
\end{thm}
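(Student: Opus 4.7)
The strategy combines three ingredients from the earlier sections: the Pohozaev-based characterization of isolated singularities (Theorem \ref{thm-sigu-move1}), the oscillation bound \eqref{bc}, and the blow-up picture of Theorem \ref{mainthm1}. Since the lower bounds $m(p)\geq 4\pi$ (resp.\ $\geq 4\pi(1+\alpha)$ at $p=0$) are already in Theorem \ref{mainthm1}, the task is to establish the matching upper bounds.

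First, I would exploit the oscillation bound \eqref{bc} together with Theorems \ref{engy-indt} and \ref{mainthm1} to derive asymptotic radial symmetry of $u_n$ near $p$. The source term
\begin{equation*}
f_n(x) := 2V^2(x)|x|^{2\alpha_n}e^{2u_n(x)}-V(x)|x|^{\alpha_n}e^{u_n(x)}|\Psi_n(x)|^2
\end{equation*}
converges weakly as a measure to $m(p)\,\delta_p$ on $B_{\rho_0}(p)$, since $u_n\to-\infty$ on compact subsets of $B_{\rho_0}(p)\setminus\{p\}$ and $\Psi_n\to\Psi$ in $C^2_{loc}$ there. Representing $u_n$ via the Green function on $B_{\rho_0}(p)$ and using \eqref{bc} to bound the oscillation of the harmonic boundary contribution, we obtain a decomposition
\begin{equation*}
u_n(x) = -\tfrac{m_n(|x-p|)}{2\pi}\log|x-p| + h_n(x),\qquad m_n(r) := \int_{B_r(p)} f_n\,dy,
\end{equation*}
with $\{h_n\}$ having uniformly bounded oscillation on every circle $\partial B_r(p)\subset B_{\rho_0}(p)$ and $m_n(r)\to m(p)$ for each fixed $r>0$. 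A standard Harnack-type consequence is a $C^1$ bound (up to an additive constant) for $u_n$ on the annulus $B_{\rho_0}(p)\setminus B_{\rho_0/2}(p)$, which will be needed to control boundary integrals later.

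Next, I would rescale at the concentration scale to identify the bubble. Let $x_n\to p$ satisfy $u_n(x_n)=\max_{\overline{B}_{\rho_0}(p)} u_n\to+\infty$, and choose a concentration scale $\varepsilon_n\to 0$ adapted to absorb the factor $|x|^{2\alpha_n}$ (so $\varepsilon_n=e^{-u_n(x_n)}$ when $p\neq 0$, and appropriately modified when $p=0$). The rescalings $v_n(y)=u_n(x_n+\varepsilon_n y)+\log\varepsilon_n$ and $\Phi_n(y)=\sqrt{\varepsilon_n}\,\Psi_n(x_n+\varepsilon_n y)$ then converge in $C^2_{loc}$ (along a subsequence) to a bubble $(v,\Phi)$ solving a limiting system of the form \eqref{Eq-LL} on $\R^2$ with constant $V$. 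By Theorem \ref{thm-sigu-move1} applied at infinity via an inversion, this bubble extends conformally to $S^2$, and its total mass $\int_{\R^2}(2V^2|y|^{2\alpha}e^{2v}-V|y|^\alpha e^v|\Phi|^2)\,dy$ equals $4\pi$ when $p\neq 0$ and $4\pi(1+\alpha)$ when $p=0$.

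The main obstacle---and the heart of the proof---is the neck analysis: showing that no mass lies in the annular region $B_{\rho_0}(p)\setminus B_{R\varepsilon_n}(x_n)$ as $R\to+\infty$ after $n\to\infty$. For this, I would apply the Pohozaev identity to $(u_n,\Psi_n)$ on this annulus. The boundary integrals on $\partial B_{\rho_0}(p)$ are controlled by \eqref{bc} combined with the $C^1$ bound from Step 1, while those on $\partial B_{R\varepsilon_n}(x_n)$ are controlled by the $C^2_{loc}$ convergence to the bubble. Any residual mass $m(p)-m_{\mathrm{bubble}}$ trapped in the neck would force a limiting profile on a punctured disk with Pohozaev constant $C(u,\Psi)=(m(p)-m_{\mathrm{bubble}})^2/(4\pi)>0$; but the oscillation bound \eqref{bc}, combined with Theorem \ref{thm-sigu-move1}, precludes any such non-trivial logarithmic profile between the bubble scale and the macroscopic scale. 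Hence $m(p)=m_{\mathrm{bubble}}$, yielding $m(p)=4\pi$ when $p\neq 0$ and $m(p)=4\pi(1+\alpha)$ when $p=0$.
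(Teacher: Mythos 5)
Your overall strategy diverges from the paper's in an important way, and this leads to a genuine gap. The paper does \emph{not} separately identify a bubble and then rule out neck mass. Instead, it splits $u_n=\min_{\partial B_{\rho_0}}u_n+w_n+v_n$, where $w_n$ is a harmonic function with $w_n|_{\partial B_{\rho_0}}=u_n-\min_{\partial B_{\rho_0}}u_n$ (uniformly bounded by \eqref{bc} and the maximum principle), and $v_n$ solves the Dirichlet problem $-\Delta v_n=2V^2|x|^{2\alpha_n}e^{2u_n}-V|x|^{\alpha_n}e^{u_n}|\Psi_n|^2$, $v_n|_{\partial B_{\rho_0}}=0$. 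Green's representation then gives $v_n\to \frac{m(p)}{2\pi}\log\frac{1}{|x|}+R(x)$ in $C^1_{loc}(B_{\rho_0}\setminus\{0\})$ with $R\in C^1$. Plugging this into the Pohozaev identity for $(v_n,\Psi_n)$ on $B_\rho(0)$ (after rewriting the system in terms of $v_n$ with the coefficient $K_n=Ve^{\min u_n+w_n}$), and taking $n\to\infty$ followed by $\rho\to 0$, the boundary term gives $\frac{1}{4\pi}m(p)^2$, the interior integral gives $(1+\alpha)m(p)$, and the remaining terms vanish. The resulting quadratic $\frac{1}{4\pi}m(p)^2=(1+\alpha)m(p)$ forces $m(p)=4\pi(1+\alpha)$. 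This is a direct single-scale Pohozaev computation, not a bubble-plus-neck decomposition.

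The gap in your argument lies in the claim that the bubble mass at $p=0$ is automatically $4\pi(1+\alpha)$. As in Sections 6--7, the rescaling near $p=0$ splits into two cases. In Case I ($t_n/\lambda_n=O(1)$) the bubble carries the factor $|x|^{2\alpha}$ and has mass $4\pi(1+\alpha)$; but in Case II ($t_n/\lambda_n\to\infty$, bubble forms at $x_n\to 0$ with $|x_n|/\lambda_n\to\infty$) the rescaled limit solves the system \emph{without} $|x|^{2\alpha}$ and has mass exactly $4\pi$. Your "no neck mass, hence $m(p)=m_{\mathrm{bubble}}$" reasoning would then give $m(p)=4\pi$ in Case II, which is wrong when $\alpha>0$. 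Moreover, the last-paragraph claim that the oscillation bound together with Theorem \ref{thm-sigu-move1} "precludes any non-trivial logarithmic profile" is not accurate: the limit of $v_n$ \emph{does} have a logarithmic singularity of coefficient $m(p)/(2\pi)$, and the oscillation bound says nothing about intermediate scales. What actually pins down $m(p)$ is the Pohozaev identity giving the quadratic constraint $\frac{m(p)^2}{4\pi}=(1+\alpha)m(p)$; that is the step your write-up replaces with a heuristic "contradiction" that does not quite close.
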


\

For the global super-Liouville equations, if we let $(M, \A, g)$ be a compact Riemann surface with conical singularities represented by the divisor $\A=\Sigma_{j=1}^{m}\alpha_j q_j$, $\alpha_j>0$ and with a spin structure. Writing $g=e^{2\phi}g_0$, where $g_0$ is a smooth metric on $M$,  in Section \ref{global}, we can deduce from the results for the local super-Liouville equations:

\begin{thm} \label{thmsin} Let $(u_n,\psi_n)$ be a sequence of solutions of (\ref{eq-2}) with energy conditions:
$$
\int_{M}e^{2u_n}dg<C,~~~~~~\int_{M}|\psi_n|^4dg<C.
$$
Define
$$
\Sigma _1=\left\{ x\in M,\text{ there is a sequence
}y_n\rightarrow x\text{ such that }u_n(y_n)\rightarrow +\infty
\right\}.
$$
Then there exists $G\in W^{1,q}(M,g_0)\cap C_{loc}^2(M\backslash \Sigma_1)$
with $\int_{M}Gdg_0=0$ for $1<q<2$ such that
$$
u_n+\phi-\frac 1{|M|}\int_{M}(u_n+\phi)dg_0\rightarrow G
$$
in $C_{loc}^2(M\backslash \Sigma_1)$ and weakly in $W^{1,q}(M,g_0)$.
Moreover, in $\Sigma_1=\{p_1,p_2,\cdots, p_l\}$, then for $R>0$
small such that $B_R(p_k)\cap \Sigma_1 \cap \{q_1,...,q_m\}=\{p_k\}$, $k=1,2,\cdots, l$,
we have
\begin{equation} \label{greenfct}
G(x)=\left\{
\begin{array}{rcl}
- \frac 1{2\pi}m(p_k)\log d(x,p_k)+g(x),  \quad &{\text if}&  p_k \neq q_1,...,q_m  \\
- (\frac 1{2\pi}m(p_k)- \alpha_j )\log d(x,p_k)+g(x),  \quad    &{\text if}&   p_k = q_j, j=1,...,m
 \end{array}
\right.
\end{equation}
for $x\in B_R(p_k)\backslash \{p_k\}$ with $g\in C^2(B_R(p_k))$, where $d(x,p_k)$ denotes the Riemannian distance between $x$ and $p_k$ with respect to $g_0$ and
$$m(p_k)=\lim_{R\rightarrow 0}\lim_{n\rightarrow \infty}\int_{B_R(p_k)}(2e^{2(u_n+\phi)}-e^{u_n+\phi}|e^{\frac{\phi}{2}}\psi_n|^2-K_{g_0})dg_0,
$$

\end{thm}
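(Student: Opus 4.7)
The plan is to reduce the statement to the local analysis of Sections \ref{local}--\ref{value} by pulling back the system from the singular metric $g=e^{2\phi}g_0$ to the smooth background metric $g_0$. Setting $v_n:=u_n+\phi$ and $\Psi_n:=e^{\phi/2}\psi_n$, the conformal covariance of the Laplacian and the Dirac operator in dimension two converts (\ref{eq-2}) into
\[
-\Delta_{g_0}v_n = 2e^{2v_n} - e^{v_n}|\Psi_n|^2 - K_{g_0},\qquad \slashiii{D}_{g_0}\Psi_n = -e^{v_n}\Psi_n
\]
on $M\setminus\{q_1,\dots,q_m\}$; the energy bounds are preserved since $e^{2u_n}\,dg=e^{2v_n}\,dg_0$ and $|\psi_n|^4\,dg=|\Psi_n|^4\,dg_0$. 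In a conformal chart about $q_j$ one has $\phi=\alpha_j\log|z-z(q_j)|+\tilde\phi_j$ with $\tilde\phi_j\in C^{1,\beta}$, and after subtracting the logarithm from $v_n$ the equation takes exactly the form (\ref{Eq-LL}) with $\alpha=\alpha_j$ and a $C^{1,\beta}$ coefficient $V$; charts about regular points of $g$ give the same form with $\alpha=0$.

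Covering $M$ by finitely many such charts and applying Theorem \ref{mainthm} in each, one obtains the three-fold alternative pointwise, so $\Sigma_1$ is finite, say $\Sigma_1=\{p_1,\dots,p_l\}$. When $\Sigma_1\neq\emptyset$, Theorem \ref{mainthm1} forces $u_n\to-\infty$ uniformly on compact subsets of $M\setminus\Sigma_1$; combined with the $L^\infty_{loc}$ bound on $\Psi_n$ from Theorem \ref{mainthm}, this gives $f_n:=2e^{2v_n}-e^{v_n}|\Psi_n|^2\to 0$ locally uniformly off $\Sigma_1$. A Cauchy--Schwarz application to the two energy bounds shows $\|f_n\|_{L^1(M,g_0)}\le C$, so after extracting a subsequence the measures $f_n\,dg_0$ converge weak-$*$ to a measure supported on the finite set $\Sigma_1$; the definition (\ref{mp}) identifies this limit as $\sum_{k=1}^{l}m(p_k)\delta_{p_k}$.

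Let $w_n:=v_n-\frac{1}{|M|}\int_M v_n\,dg_0$. Then $-\Delta_{g_0}w_n=f_n-K_{g_0}$ with uniformly bounded $L^1$ data on the closed surface, and Stampacchia-type duality estimates give that $\{w_n\}$ is bounded in $W^{1,q}(M,g_0)$ for every $1<q<2$. Along a subsequence $w_n\rightharpoonup G$ weakly in $W^{1,q}$ with $\int_M G\,dg_0=0$. On any compact subset of $M\setminus\Sigma_1$ the right-hand side of the equation is uniformly bounded in $C^{k,\beta}$, so Schauder theory upgrades the convergence to $C^2_{loc}(M\setminus\Sigma_1)$. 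Passing to the distributional limit yields
\[
-\Delta_{g_0}G = \sum_{k=1}^{l}m(p_k)\,\delta_{p_k} - K_{g_0}
\]
away from the conical locus. The case $\Sigma_1=\emptyset$ follows by the same argument with no Dirac source.

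Finally, the local profile (\ref{greenfct}) at each $p_k$ is read off from the standard Green's function expansion for $-\Delta_{g_0}$ on $(M,g_0)$. At a regular point $p_k\notin\{q_1,\dots,q_m\}$ only the Dirac source $m(p_k)\delta_{p_k}$ contributes, giving the coefficient $-\frac{1}{2\pi}m(p_k)$ of $\log d(x,p_k)$ with a $C^2$ remainder. When $p_k=q_j$ for some $j$, the fixed logarithmic piece $\alpha_j\log|z-z(q_j)|$ that $v_n=u_n+\phi$ inherits from $\phi$ persists in $G$ and shifts the coefficient of $\log d(x,p_k)$ by $+\alpha_j$, yielding the second case of (\ref{greenfct}). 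I expect the step requiring the most attention to be precisely this overlap of a conical singularity with a blow-up point, because there one must match the leading logarithm predicted by the Green's function representation with the singular exponent produced by the local Pohozaev-constant analysis of Section \ref{pohozaev}, and verify that the two are consistent.
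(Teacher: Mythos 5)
Your proposal follows essentially the same route as the paper: conformally transfer the system from $g=e^{2\phi}g_0$ to the smooth background metric $g_0$, invoke the local concentration--compactness results (Theorems~\ref{mainthm} and \ref{mainthm1}) in charts, obtain the $W^{1,q}$ bound by $L^1$-duality, and read off the Green's function asymptotics at the blow-up points. Two points worth tightening. First, you write $-\Delta_{g_0}w_n=f_n-K_{g_0}$ ``with uniformly bounded $L^1$ data,'' but because $\phi\sim\alpha_j\log|z-z(q_j)|$ near the conical points, the correct distributional identity on all of $M$ is
\[
-\Delta_{g_0}(u_n+\phi)=2e^{2(u_n+\phi)}-e^{u_n+\phi}|e^{\phi/2}\psi_n|^2-K_{g_0}-\sum_{j=1}^m 2\pi\alpha_j\delta_{q_j},
\]
and the atomic terms $-2\pi\alpha_j\delta_{q_j}$ must be carried through both the duality estimate and the global PDE defining $G$; the paper writes $-\Delta_{g_0}G=\sum_{p\in\Sigma_1}m(p)\delta_p-K_{g_0}-\sum_j 2\pi\alpha_j\delta_{q_j}$ and obtains \eqref{greenfct} directly from this, whereas you recover the $+\alpha_j$ shift somewhat heuristically (``the logarithmic piece persists''), which is correct but less transparent. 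Second, your appeal to Theorems~\ref{mainthm} and \ref{mainthm1} in charts is phrased for $v_n=u_n+\phi$, while those theorems are stated for the weighted unknown $u_n$ itself; the paper bridges this via Remark~\ref{rem3.4}, which shows the blow-up sets of $u_n$ and $u_n+\alpha_n\log|x|$ coincide, and that step should be cited explicitly rather than taken for granted.
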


\
 It is clear from the above theorem that
\begin{equation*}
 \max_{\partial B_{\rho_0}(p)}u_n-\min_{\partial B_{\rho_0}(p)}u_n\leq C,\\
\end{equation*}if $p\in \Sigma_1$ and $p$ is the only blow-up point in $\bar{B}_{\rho_0}(p)$ for some small $\rho_0>0$.
Then we get the blow-up value $m(p)=4\pi$ when $p$ is not a conical singularity of $M$ and $m(p)=4\pi(1+\alpha)$ when $p$ is a conical singularity  of $M$ with order $\alpha$.

On the other hand, on the surface $(M,\A,g)$ with the divisor $\A=\Sigma_{j=1}^{m}\alpha_jq_j$, $\alpha_j>0$, by the Gauss-Bonnet formula,
\begin{equation*}
\frac 1{2\pi}\int_{M}K_gdg = \mathcal{X}(M, \A ).
\end{equation*}
Here $\mathcal{X}(M, \A)$ is the Euler characteristic of $(M,\A)$ defined by
$$\mathcal{X}(M, \A)= \mathcal{X}(M) + |\A|,$$
where $\mathcal{X}(M)=2-2g_M$ is the topological Euler characteristic of $M$ itself, $g_M$ is the genus of $M$ and $|\A|= \Sigma_{j=1}^{m}\alpha_j$ is the degree of $\A$. Then we deduce that
\begin{equation*}
\int_{M}2e^{2u_n}-e^{u_n}|\psi_n|^2dg=\int_{M}2e^{2(u_n+\phi)}-e^{u_n+\phi}|e^{\frac{\phi}2}\psi_n|^2dg_0
=4\pi(1-g_M)+2\pi\Sigma_{j=1}^{m}\alpha_j.
\end{equation*}Since the possible values of $\lim_{n\rightarrow \infty} \int_{M}2e^{2(u_n+\phi)}-e^{u_n+\phi}|e^{\frac{\phi}2}\psi_n|^2dg_0$ are $$4\pi k_0+\Sigma_{j=1}^{m}4\pi(1+\alpha_j)k_j$$ for some nonnegative integers $k_0$ and $k_j, j=1,...,m$. Therefore we have the following:

\begin{thm}\label{global-blowup}
Let $(M,\A,g)$ be a surface with divisor $\A=\Sigma_{j=1}^{m}\alpha_j q_j, \alpha_j>0$.  Then
\begin{itemize}
\item[(i)] if $4\pi(1-g_M)+2\pi\Sigma_{j=1}^{m}\alpha_j = 4\pi$, then the blow-up set $\Sigma_1$ contains at most one point. In particular, $\Sigma_1$ contains at most one point if $g_M=0$ and $\A=0$.
\item[(ii)] if $4\pi(1-g_M)+2\pi\Sigma_{j=1}^{m}\alpha_j < 4\pi$, then the blow-up set $\Sigma_1=\emptyset$.
\end{itemize}
\end{thm}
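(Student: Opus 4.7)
My plan is to convert the Gauss--Bonnet curvature budget into a linear Diophantine-type identity on blow-up masses, and then exploit the strict positivity $\alpha_j>0$ to rule out all but the trivial configurations. First, integrating the first equation of (\ref{eq-2}) over $M$ kills the Laplacian, and the Gauss--Bonnet formula on $(M,\A,g)$ gives $\int_M K_g\,dg=4\pi(1-g_M)+2\pi\sum_{j=1}^m\alpha_j$. Hence
\[
\int_M \bigl(2e^{2u_n} - e^{u_n}|\psi_n|^2\bigr)\,dg \;=\; 4\pi(1-g_M) + 2\pi\sum_{j=1}^m\alpha_j ,
\]
an $n$-independent quantity, as already noted in the paragraph preceding the statement.

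Next, assume $\Sigma_1\neq\emptyset$. Theorem \ref{thmsin}, together with the local concentration result Theorem \ref{mainthm1} read in coordinate charts, implies that $u_n\to-\infty$ uniformly on compact subsets of $M\setminus\Sigma_1$, and therefore that the measure $(2e^{2u_n}-e^{u_n}|\psi_n|^2)\,dg$ carries no mass in the limit outside $\Sigma_1$. Passing to the limit in the identity above yields
\[
4\pi(1-g_M)+2\pi\sum_{j=1}^m \alpha_j \;=\; \sum_{p\in\Sigma_1} m(p),
\]
with $m(p)$ the blow-up value from (\ref{mp}). To evaluate each $m(p)$ I would invoke Theorem \ref{BV}; its oscillation hypothesis (\ref{bc}) is verified using the Green's-function expansion (\ref{greenfct}), since on a small circle $\partial B_{\rho_0}(p)$ the normalized limit of $u_n+\phi$ is a logarithm centered at $p$ (constant on the circle) plus a smooth remainder, and $\phi$ itself differs from $\alpha_j\log|z-z(q_j)|$ by a continuous function at a conical $p=q_j$. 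Theorem \ref{BV} then gives $m(p)=4\pi$ when $p\notin\{q_1,\dots,q_m\}$ and $m(p)=4\pi(1+\alpha_j)$ when $p=q_j$.

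Substituting into the mass identity produces
\[
4\pi(1-g_M)+2\pi\sum_{j=1}^m\alpha_j \;=\; 4\pi k_0 + 4\pi\sum_{j=1}^m (1+\alpha_j)\,k_j,
\]
where $k_0\geq 0$ counts the non-conical blow-up points and each $k_j\in\{0,1\}$ records whether $q_j\in\Sigma_1$. The rest is arithmetic. For case (i) the left-hand side equals $4\pi$, so $k_0+\sum_j(1+\alpha_j)k_j=1$; since every factor $1+\alpha_j$ is strictly greater than $1$, any single $k_j=1$ overshoots, forcing all $k_j=0$ and then $k_0\leq 1$, i.e.\ $|\Sigma_1|\leq 1$. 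The specialization $g_M=0$, $\A=0$ falls into this case. For case (ii) the same arithmetic with strict inequality $<1$ forces $k_0$ and every $k_j$ to vanish, so $\Sigma_1=\emptyset$.

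The main obstacle I anticipate is not the arithmetic, but the verification of (\ref{bc}) at a conical blow-up point $p=q_j$, where $\phi$ itself diverges logarithmically. One must carefully separate the unbounded but radially symmetric conical part of $\phi$ from its bounded remainder, so that the oscillation bound established for the normalized sequence in Theorem \ref{thmsin} transfers back to $u_n$ itself. Once this is in place, the rest of the argument is the short integer-arithmetic step sketched above.
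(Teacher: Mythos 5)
Your proposal is correct and follows essentially the same route as the paper: Gauss--Bonnet turns the integrated equation into an $n$-independent total mass, Theorem \ref{thmsin} gives the distributional concentration on $\Sigma_1$ together with the Green's-function expansion that verifies the boundary-oscillation hypothesis (\ref{bc}), Theorem \ref{BV} then quantizes $m(p)\in\{4\pi,\,4\pi(1+\alpha_j)\}$, and the result is the arithmetic you describe. The paper compresses this to ``follows from Theorem \ref{thmsin} and the Gauss--Bonnet formula,'' with the quantization and the counting argument stated in the paragraph preceding the theorem; your identification of the verification of (\ref{bc}) at a conical blow-up point as the only nontrivial step is accurate, and your outline of how to handle it (the divergent part of the global conformal factor is radial, so it is constant on $\partial B_{\rho_0}(p)$ and does not contribute to the oscillation) is exactly what makes the paper's ``it is clear'' remark hold.
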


\begin{rem}
Our method can also be applied to deal with a sequence of solutions  $(u_n,\Psi _n)$ of the following local super-Liouville type equations
with two coefficient functions
\begin{equation} \label{Eq-Sn2}
\left\{
\begin{array}{rcl}
-\Delta u_n(x) &=& 2V_n^2(x)|x|^{2\alpha_n}e^{2u_n(x)}-W_n(x)|x|^{\alpha_n}e^{u_n(x)}|\Psi_n|^2  \quad\\
\slashiii{D}\Psi_n &=& -W_n(x)|x|^{\alpha_n}e^{u_n(x)}\Psi_n \end{array} \text {in } B_{r},
\right.
\end{equation}
and satisfying the energy condition
\begin{equation}\label{Eq-Cn2}
\int_{B_r}|x|^{2\alpha_n}e^{2u_n}dx<C,\text{ and }\int_{B_r}\left| \Psi _n\right|
^4dx<C.
\end{equation}
for some constant $C>0$, where
\begin{itemize}
\item[i)]  $\alpha_n > -1$ and $ \alpha_n \rightarrow \alpha > -1$,
\item[ii)] $V_n, W_n \in C^{0}(\overline{B_r}), 0< a\leq V_n(x), W_n(x) \leq b < + \infty$,
$||\nabla V_n||_{L^{\infty}(\overline{B_r})}+||\nabla W_n||_{L^{\infty}(\overline{B_r})} \leq C $.
\end{itemize}
By slightly modifying the proofs of some analytical properties in Section \ref{local}, Section \ref{pohozaev}, Section \ref{bubble} as well as  Theorem  \ref{mainthm}, Theorem \ref{engy-indt}, Theorem \ref{mainthm1}, Theorem \ref{BV}, Theorem \ref{thmsin},
the corresponding blow-up results hold (see more details in Section \ref{two}).
For similar results for Liouville type equations with singular data and with $-1< \alpha <0$, we refer to  \cite{BaMo}.
\end{rem}

\section*{\bf{Acknowledgements}} The research leading to these results has received funding from the
European Research Council under the European Union's Seventh
Framework Programme (FP7/2007-2013) / ERC grant agreement
No. 267087. The second author is also supported partially by NSFC of China (No. 11271253). The third author was supported in part by National Science Foundation of China (No. 11601325).

\
\

\section{Invariance of the global system and special solutions}\label{conf}

In this section, we start with the  invariance of the global super-Liouville equations under conformal diffeomorphisms that preserve the conical points. Then, we shall provide two special solutions.

\begin{prop}\label{prop-1}
The functional $E(u,\psi )$ is  invariant under conformal diffeomorphisms $\varphi :M\rightarrow M$ preserving the divisor, that is,  $\varphi^*\A=\A$ and $\varphi^*(ds^2)=\lambda ^2 ds^2$,
where $\lambda>0 $ is the conformal factor of the conformal map $\varphi,$. Set
\begin{equation*}\ba{rcl}
\ds\vs \widetilde{u} &=&\ds u\circ \varphi -\ln \lambda  \\
\ds \widetilde{\psi } &=&\ds \lambda ^{-\frac 12}\psi \circ \varphi \ea
\end{equation*}
Then $E(u,\psi )=E(\widetilde{u},\widetilde{\psi })$.
In particular, if $(u,\psi)$ is a solution of
(\ref{eq-2}), so is $(\tilde u,\tilde \psi)$.
\end{prop}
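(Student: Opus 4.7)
The plan is to verify invariance by a direct term-by-term substitution, using three ingredients: the standard conformal transformation laws in dimension two, the change-of-variables formula for the diffeomorphism $\varphi$, and integration by parts on $M\setminus\{q_1,\dots,q_m\}$.

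I would begin by recording the transformation identities induced by the hypothesis $\varphi^*g=\lambda^2 g$ in real dimension two: $\varphi^*dv_g=\lambda^2\,dv_g$, $|\nabla_g(u\circ\varphi)|_g^2=\lambda^2\,|\nabla_g u|_g^2\circ\varphi$, $\Delta_g(u\circ\varphi)=\lambda^2\,(\Delta_g u)\circ\varphi$, and the Liouville formula $K_g\circ\varphi=\lambda^{-2}\bigl(K_g-\Delta_g\ln\lambda\bigr)$; together with the conformal covariance of the Dirac operator $\slashiii{D}_g(\lambda^{-1/2}\psi\circ\varphi)=\lambda^{-3/2}(\slashiii{D}_g\psi)\circ\varphi$, under the standard identification of spinor bundles along $\varphi$ which preserves the pointwise Hermitian product.

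Next I would substitute $\widetilde{u}=u\circ\varphi-\ln\lambda$ and $\widetilde{\psi}=\lambda^{-1/2}\psi\circ\varphi$ into each of the four pieces of $E$. The nonlinear terms $e^{2u}$ and $e^u\langle\psi,\psi\rangle$, together with the Dirac pairing $\langle\slashiii{D}_g\psi,\psi\rangle$, each acquire precisely the power of $\lambda$ needed so that the change of variables $y=\varphi(x)$ converts them term-by-term to their counterparts in $E(u,\psi)$. The Dirichlet and curvature pieces expand into additional contributions involving $|\nabla\ln\lambda|^2$, $\nabla(u\circ\varphi)\cdot\nabla\ln\lambda$, $K_g\ln\lambda$, and $(u\circ\varphi)\Delta_g\ln\lambda$; integrating by parts and invoking the Liouville identity above, these surplus terms recombine and cancel identically. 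This cancellation is exactly the mechanism behind conformal invariance of the bosonic part of the action.

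The main obstacle is that the integrations by parts in the previous step are legitimate only on the punctured surface $M\setminus\{q_1,\dots,q_m\}$. I would handle this by excising small geodesic discs $B_\varepsilon(q_j)$ together with $B_\varepsilon(\varphi^{-1}(q_j))$, performing the computation on the complement, and then showing that every boundary integral over the resulting circles is $o(1)$ as $\varepsilon\to 0$. The hypothesis $\varphi^*\A=\A$ is essential here: it forces the local expansion of $\varphi$ near each $q_j$ to send it to a conical point of the same order $\alpha_j$, so that $\ln\lambda$ has at most logarithmic singularities at the $q_j$ with matching coefficients; combined with the $C^{1,\beta}$-type regularity built into the definition of $(M,\A,g)$, the integrands involving $u$, $\nabla u$, $\psi$, and $\nabla\ln\lambda$ yield boundary contributions that vanish in the limit. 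Away from the conical set the computation reduces to the one in the smooth case treated in \cite{JWZ}. Finally, the second assertion---that $(\widetilde{u},\widetilde{\psi})$ solves (\ref{eq-2}) whenever $(u,\psi)$ does---then follows either immediately from the established invariance of $E$ (critical points go to critical points) or, equivalently, by applying the same conformal covariance identities directly to the two equations of the Euler--Lagrange system.
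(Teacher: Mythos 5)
Your proposal is correct and follows the same direct computation that the paper invokes by reference to \cite{JWZ}: conformal transformation laws for the gradient, the Laplacian, the Gaussian curvature and the Dirac operator, a change of variables along $\varphi$, and an integration by parts that absorbs the cross terms via Liouville's formula. Your additional verification that the boundary contributions near the conical points vanish when $\varphi^*\A=\A$ (since the divisor-preservation keeps $\lambda$ and its derivatives under control there) is precisely the one extra point that distinguishes the singular case from the smooth case treated in \cite{JWZ}, a point the paper leaves implicit by simply citing that reference.
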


The proof of proposition \ref{prop-1} is the same as that of the case of $\A=0$ considered in \cite{JWZ}.

\

As we will see later (Section 6), however, the local super-Liouville type system (\ref{Eq-LL}) we shall study is not conformally invariant near the conical singularity.
During the blow-up process, after suitable rescaling and translation in the domain, we can obtain bubbling solutions of (\ref{Eq-LL}) and (\ref{Co-LL}) on $\R^2$
with $\alpha\geq 0$ and  $V\equiv const $, at which point we can apply the above invariance of the global system
and the singularity removability results in Section 4 and Section 5 to conclude that these bubbling solutions can be conformally extended to $S^2$.

\

Now we present some examples of solutions of the super-Liouville equations (\ref{eq-2}).
Let  $(M,ds^2)$ be the mathematical version of  an American football, i.e., $M$ is a sphere with two antipodal singularities of equal angle. From \cite{T2},
$(M,ds^2)$  is conformally equivalent to $\C\cup \infty$ with constant curvature $K=1$ and conical
singularities at $z=0$ and $z=\infty$ with the same angle $\alpha$, and with the conformal metric
$$\frac{(2+2\alpha)^2|z|^{2\alpha}dz^2}{(1+|z|^{2+2\alpha})^2}$$
for $\alpha$ being not an integer. Therefore, if we define a conformal map $\varphi:(M,ds^2)\rightarrow  \C\cup \infty$ such that
$$(\varphi^{-1})^*(ds^2)=\frac{(2+2\alpha)^2|z|^{2\alpha}dz^2}{(1+|z|^{2+2\alpha})^2},$$ then $u=\frac 12
\log \frac 12+\frac 12\log \det \left| d\varphi \right|$ are  solutions of
\begin{equation*}
-\Delta u+1-2e^{2u}=0 \qquad \text{ on }  M\backslash\{\varphi^{-1}(0),\varphi^{-1}(\infty)\}.
\end{equation*}
In particular, this  yields solutions of the form $(u,0)$ of (\ref{eq-2}).

\

There is another example of a solution of (\ref{eq-2}). Let us recall
that a {\it Killing spinor} is a spinor $\psi$  satisfying
\[
\nabla _X\psi = \lambda X\cdot \psi , \quad \hbox{
for any vector field } X
\]
for some constant $\lambda$. On the standard sphere, there are Killing spinors
with the Killing constant $\lambda= \frac 12$, see for instance \cite{baum}.
 Such a
Killing spinor
is an eigenspinor, i.e.
\[
\slashiii{D}\psi =- \psi,
\]
with constant $|\psi|^2$. Choosing a Killing spinor $\psi$ with
$|\psi|^2=1$, $(0,\psi)$ is a solution
of (\ref{eq-2}). If we let $\pi$ be the stereographic projection from $\S^2\backslash\{north pole\}$
to the Euclidean plane $\R^2$ such that the metric of $\R^2$ is  \[\frac 4{(|1+|x|^2)^2}|dx|^2,\]
then any Killing spinor has the form
\[\frac{v+x\cdot v}{\sqrt{1+|x|^2}},\]
up to a translation or a dilation (see \cite{baum}). We put $\widetilde \psi=\frac{v+x\cdot v}{\sqrt{1+|x|^2}}$.
Then $(0,\psi)=(0,(\log\det|d\varphi|)^{-\frac 12}\widetilde\psi\circ\varphi)$ is a solution of (\ref{eq-2}).

\
\

\section{The local super-Liouville system}\label{local}

In this section, we shall first derive the local version of the super-Liouville equations. Then we shall analyze the regularity of solutions under the small energy condition. Consequently, we can prove Theorem \ref{mainthm}.

It is well known that (see e.g. \cite{T1}), in a small neighborhood $U(p)$ of a given point $p\in M$, we can define an isothermal coordimate system $x=(x_1,x_2)$ centered at $p$, such that $p$ corresponds to $x=0$ and
$ds^2=e^{2\phi}|x|^{2\alpha}(dx_1^2+dx_2^2)$ in $B_{2r}(0)=\{(x_1^2+x_2^2)< 2r\}$, where $\phi$ is smooth away from $p$ and
continuous at $p$. We can choose such a neighborhood small enough so that  if $p$ is a conical singular point of $ds^2$,
then $U(p)\cap {\A}=\{p\}$ and $\alpha>0 $, while, if $p$ is a smooth point of $ds^2$, then  $U(p)\cap {\A}=\emptyset$
and $\alpha=0 $. Consequently, with respect to the isothermal coordinates, $(u,\psi)$ satisfies
\begin{equation}
\left\{
\begin{array}{rcl}
-\Delta u(x) &=&  e^{2\phi(x)}|x|^{2\alpha}(2e^{2u(x)}-e^{u(x)}|\psi|^2(x) -K_g)\quad\\
\slashiii{D}(e^{\frac {\phi(x)}2}|x|^{\frac {\alpha}2}\psi) &=& -e^{\phi(x)}|x|^{\alpha}e^{u(x)}(e^{\frac {\phi(x)}2}
|x|^{\frac {\alpha}2}\psi)
\end{array} \text {in } B_{r}(0).
\right.   \label{eq-5}
\end{equation}
Here $\Delta=\partial^2_{x_1x_1}+\partial^2_{x_2x_2}$ is the usual Laplacian. The Dirac operator
$\slashiii{D} $ is the usual one, which can be seen as the (doubled) Cauchy-Riemann operator. That is, let
$e_1=\frac{\partial}{\partial x_1}$ and
$e_2=\frac{\partial}{\partial x_2}$ be the standard orthonormal
frame on $\R^2$. A spinor field is simply a map $\Psi:\R^2\to
\Delta_2=\C^2$, and $e_1$, $e_2$ acting on spinor fields can
be identified with multiplication with matrices
\[e_1=\left(\begin{matrix}0& 1\\ -1&0 \end{matrix}\right),
\quad e_2=\left(\begin{matrix}0& i\\ i&0 \end{matrix}\right).\] If
$\Psi:=\left(\begin{matrix} \ds f
\\ \ds g\end{matrix}\right)
:\R^2\to \C^2$ is a spinor field, then the Dirac operator is
\[\slashiii{D}\Psi=\ds \left(\begin{matrix}0& 1\\ -1&0
\end{matrix}\right) \left(\begin{matrix} \ds \frac{\partial
f}{\partial x_1}\\ \ds \frac{\partial g}{\partial x_1}
\end{matrix}\right)+
\left(\begin{matrix}0& i\\ i&0 \end{matrix}\right)
\left(\begin{matrix} \ds \frac{\partial f}{\partial x_2} \\
\ds\frac{\partial g}{\partial x_2}
\end{matrix}\right)=
2\left(\begin{matrix} \ds \frac{\partial g}{\partial \bar z}
\\ -\ds\frac{\partial f}{\partial z}\end{matrix}\right),\]
where
\[\frac{\partial}{\partial z}=\frac 12 \left(\frac{\partial }{\partial x_1}
- i\frac{\partial }{\partial x_2}\right), \quad
\frac{\partial}{\partial \bar z}=\frac 12 \left(\frac{\partial
}{\partial x_1} + i\frac{\partial }{\partial x_2}\right).\]
For more details on Dirac operator and spin geometry,  we refer to \cite{LM}.

We note that the last term in the first equation of \eqref{eq-5} is $e^{2\phi}|x|^{2\alpha}K_g$, which satisfies
$$-\Delta\phi= e^{2\phi}|x|^{2\alpha}K_g. $$
Since $\phi $ is continuous,  elliptic regularity implies that $\phi\in W^{2,p}_{loc}$ for all $p<+\infty$ if $\alpha\geq 0$ and
 if the curvature $K_g$ of $M$ is regular enough. Therefore, by Sobolev embedding, $\phi\in C^{1,\delta}$ if $\alpha\geq 0$.
If we denote $V(x)=e^{\phi}$ and $W(x)=e^{2\phi}|x|^{2\alpha}K_g$, then  $0< a\leq V(x)\leq b$ and $W(x)$ is in $L^p(B_r(0))$ for
all $p>1$ if the curvature $K_g$ of $M$ is regular enough.

\
\
Therefore,  the equations (\ref{eq-5}) can be rewritten as:
\begin{equation*}
\left\{
\begin{array}{rcl}
-\Delta u(x) &=& 2V^2(x)|x|^{2\alpha}e^{2u(x)}-V(x)|x|^{\alpha}e^{u(x)}|\Psi|^2-W(x)  \quad\\
\slashiii{D}\Psi &=& -V(x)|x|^{\alpha}e^{u(x)}\Psi
\end{array} \text {in } B_{r}(0).
\right.
\end{equation*}
Here $\alpha \geq 0$, $V(x)$ and $W(x)$ satisfy the following conditions:
\begin{itemize}
 \item[i)]  $0< a\leq V(x)\leq b$;
\item[ii)] $W(x)  \in L^p(B_r(0))$, for all $p>1$.
\end{itemize}

Furthermore, let $w(x)$ satisfy
\begin{equation*}
\left\{
\begin{array}{rcll}
-\Delta w(x) &=& -W(x)  &\quad \text {in } B_{r}(0),\\
w(x) &=& 0   &\quad \text{ on } \partial B_r(0).
\end{array}
\right.
\end{equation*}
It is easy to see that $w(x)$ is $C^{1,\beta}$ in $B_{r}(0)$ for some $0<\beta<1$. Setting $v(x)=u(x)-w(x)$, then $(v,\Psi)$ satisfies

\begin{equation*}
\left\{
\begin{array}{rcl}
-\Delta v(x) &=& 2V^2(x)e^{2w(x)}|x|^{2\alpha}e^{2v(x)}-V(x)e^{w(x)}|x|^{\alpha}e^{v(x)}|\Psi|^2  \quad\\
\slashiii{D}\Psi &=& -V(x)e^{w(x)}|x|^{\alpha}e^{v(x)}\Psi
\end{array} \text {in } B_{r}(0).
\right.
\end{equation*}

\noindent Now we come to the local version of the  singular super-Liouville-type equations
\begin{equation}\label{Eq-L}
\left\{
\begin{array}{rcl}
-\Delta u(x) &=& 2V^2(x)|x|^{2\alpha}e^{2u(x)}-V(x)|x|^{\alpha}e^{u(x)}|\Psi|^2  \quad\\
\slashiii{D}\Psi &=& -V(x)|x|^{\alpha}e^{u(x)}\Psi
\end{array} \text { in } B_{r}(0),
\right.
\end{equation}
Here $V(x)$ is a $C^{1,\beta}$ function and satisfies $0< a\leq V(x)\leq b$. We also assume that $(u,\Psi)$ satisfy the energy condition:
\begin{equation}\label{Co-L}
\int_{B_r(0)}|x|^{2\alpha}e^{2u}+|\Psi|^4dx<+\infty.
\end{equation}

\
\

Next we consider the regularity of solutions under the energy condition. We put $B_r:=B_r(0)$.

First, we define weak solutions of \eqref{Eq-L} and \eqref{Co-L}. We say that $(u, \Psi)$ is a weak solution of  \eqref{Eq-L} and \eqref{Co-L}, if $u\in W^{1,2}(B_r)$
and $\Psi \in W^{1,\frac 43}(\Gamma (\Sigma B_r))$ satisfy
\begin{eqnarray*}
\int_{B_r} \nabla u\nabla\phi dx &=& \int_{B_r} (2V^2(x)|x|^{2\alpha}e^{2u}-V(x)|x|^{\alpha}e^u |\Psi|^2)\phi dx, \\
\int_{B_r} \langle \Psi,\slashiii{D} \xi \rangle dx &=&-\int_{B_r} V(x)|x|^{\alpha}e^u \langle \Psi,\xi \rangle dx,
\end{eqnarray*}
for any $\phi\in C^\infty_0(B_r)$ and any spinor $\xi \in C^\infty \cap  W_0^{1,\frac 43} (\Gamma (\Sigma B_r))$.
A weak solution is a classical solution by the following:

\begin{prop}\label{prop-a} Let $(u,\Psi)$ be a weak solution  of \eqref{Eq-L} and \eqref{Co-L}.
Then $(u,\Psi)\in C^2(B_r)\times C^2(\Gamma(\Sigma B_r))$.
\end{prop}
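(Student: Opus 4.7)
The plan is to establish regularity via a Brezis--Merle / bootstrap scheme that treats the two equations in tandem: first obtain arbitrarily high $L^p$-integrability of $e^u$ on small balls from the sub-critical $L^1$ nature of the source, then trade this against Dirac elliptic theory to raise the regularity of $\Psi$, and finally iterate with Schauder to reach $C^2$.

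First I would check that the source $f:=2V^2|x|^{2\alpha}e^{2u}-V|x|^\alpha e^u|\Psi|^2$ of the $u$-equation lies in $L^1(B_r)$. The first term is directly controlled by the energy \eqref{Co-L}. For the second, write $V|x|^\alpha e^u=V(|x|^{2\alpha}e^{2u})^{1/2}$ and apply Cauchy--Schwarz to obtain
\[
\int_{B_r}V|x|^\alpha e^u|\Psi|^2\,dx\le b\Big(\int_{B_r}|x|^{2\alpha}e^{2u}\,dx\Big)^{1/2}\Big(\int_{B_r}|\Psi|^4\,dx\Big)^{1/2}<\infty.
\]
Since $f$ is absolutely continuous with respect to Lebesgue measure, for any $\epsilon>0$ and any $x_0\in B_r$ there is $\rho>0$ with $\|f\|_{L^1(B_\rho(x_0))}<\epsilon$. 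Decomposing $u=v+h$ on $B_\rho(x_0)$ with $-\Delta v=f$, $v|_{\partial B_\rho(x_0)}=0$, and $h=u-v$ harmonic, the classical Brezis--Merle inequality gives $\int e^{p|v|}<\infty$ whenever $p\epsilon<4\pi$; combined with the interior boundedness of $h$, this forces $e^{pu}\in L^1_{\mathrm{loc}}(B_r)$ for every $p<\infty$.

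I would then run the bootstrap. Because $\alpha\ge0$, the weight $|x|^{\alpha}$ is bounded on $B_r$, so $V|x|^\alpha e^u\in L^q_{\mathrm{loc}}$ for every $q<\infty$; combined with $\Psi\in L^4_{\mathrm{loc}}$, H\"older's inequality yields $V|x|^\alpha e^u\Psi\in L^s_{\mathrm{loc}}$ for some $s>2$, and the $L^p$-theory for $\slashiii{D}$ then gives $\Psi\in W^{1,s}_{\mathrm{loc}}\hookrightarrow C^{0,\gamma}_{\mathrm{loc}}$ by the two-dimensional Sobolev embedding. In particular $|\Psi|^2$ is locally bounded, so the right-hand side of the $u$-equation belongs to $L^p_{\mathrm{loc}}$ for every $p$, whence $u\in W^{2,p}_{\mathrm{loc}}\subset C^{1,\beta}_{\mathrm{loc}}$. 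With $u$ H\"older continuous, $e^u$ as well as the weight factors $|x|^{\alpha},|x|^{2\alpha}$ (H\"older since $\alpha\ge0$) and $V$ (in $C^{1,\beta}$) are all H\"older, so the sources of both equations are H\"older. Schauder for $-\Delta$ and the analogous interior Schauder-type estimates for $\slashiii{D}$ then promote $u$ and $\Psi$ to $C^{2,\beta}$, proving the proposition.

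The main obstacle is the opening Brezis--Merle step at $x_0=0$: the weight $|x|^{2\alpha}$ vanishes there, so the energy only controls the weighted density $|x|^{2\alpha}e^{2u}$ and does not directly give $L^1$-integrability of $e^{2u}$ near the origin. What saves the argument is that the full source $f$ is in $L^1(B_r)$, hence its mass on $B_\rho(0)$ tends to zero with $\rho$, and this is precisely the quantity that enters Brezis--Merle. The hypothesis $\alpha\ge0$ is also used in the bootstrap to ensure that multiplication by $|x|^{\alpha}$ does not destroy integrability of the coupling coefficient; for $\alpha<0$ the argument would require additional weighted estimates near the origin.
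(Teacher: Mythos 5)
Your proof is correct and reaches the same conclusion, but it takes a genuinely different route from the paper. The paper decomposes $u = u_1 + u_2$ on the whole ball $B_r$ (with $u_1$ the zero-boundary potential of the full right-hand side $f_1$), invokes Brezis--Merle on $B_r$ to obtain $e^{k|u_1|}\in L^1$ for some fixed $k>1$, and then must control the harmonic part $u_2^+$ by a weighted H\"older trick borrowed from Bartolucci--Tarantello: it estimates $\int e^{2su}$ by interpolating $\int |x|^{2\alpha}e^{2u}$ against $\int |x|^{-2t\alpha}$ for a suitable $0<s<1$, because when $\alpha>0$ the energy only controls the weighted exponential density and the naive bound $2\int u^+\le\int e^{2u}$ is unavailable. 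You instead localize: since $f\in L^1(B_r)$ (a point the paper also needs), its mass on small balls is small, so Brezis--Merle delivers $\int e^{p|v|}<\infty$ for arbitrarily large $p$ locally, and the interior boundedness of the harmonic remainder $h$ then gives $e^{pu}\in L^1_{\mathrm{loc}}$ for all $p$ at once, short-circuiting the weighted trick. One step you should make explicit is why $h$ is interiorly bounded: this uses $u\in W^{1,2}(B_r)\subset L^1_{\mathrm{loc}}$ from the definition of weak solution, so that $h=u-v\in L^1_{\mathrm{loc}}$, and harmonicity upgrades this to local boundedness via the mean value property. Your localization buys a cleaner route to arbitrarily high local integrability of $e^u$ and hence a streamlined Dirac/Schauder bootstrap; the payoff of the paper's weighted H\"older device is that it does not lean on $W^{1,2}$-control of $u$, and is therefore directly reusable in Lemma~\ref{lmuni}, where one needs a \emph{uniform} bound for a sequence $(u_n)$ whose $W^{1,2}$-norms may be unbounded.
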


Note that when $\alpha=0$ this proposition is proved in \cite{JWZ} (see Proposition 4.1). When $\alpha>0$,
it is clear that we can no longer  use the inequality $2\int {u^+}<\int {e^{2u}}<\infty$  to get  the $L^1$ integral of $u^+$.
So, we need a trick, which was introduced in \cite{bt}, to prove this proposition.

\
\

\noindent{\bf Proof of Proposition \ref{prop-a}}:  By the standard elliptic method,
to prove this propositon, it is  sufficient to show that $u^+\in L^\infty(B_{\frac r4}),\quad |\Psi| \in L^{\infty}(B_{\frac r4}).$

In fact, for the regularity of $u$, let us set
\[
f_1=2V^2(x)|x|^{2\alpha}e^{2u(x)}-V(x)|x|^{\alpha}e^{u(x)}|\Psi|^2.
\]
Then we have
\[
-\Delta u=f_1.
\]
We consider the following Dirichlet problem
\begin{equation}
\left\{
\begin{array}{rcl}
-\Delta u_1 &=& f_1,\qquad \text{in  }B_r \\
 u_1&=& 0,
\qquad \text{ on  }\partial B_r.
\end{array}
\right.  \label{57}
\end{equation}
It is clear that $f_1\in L^1(B_r).$  In view of  Theorem 1 in \cite{BM} we have
\begin{equation}
e^{k\left| u_1\right| }\in L^1(B_r)  \label{66}
\end{equation}
for some $k>1$ and in particular $ u_1\in {L^p(B_r)}$ for
some $p> 1.$

Let $u_2=u-u_1$ so that $\Delta u_2=0$ on $B_r.$ The mean value
theorem for harmonic functions implies that
\[
\left\| u_2^{+}\right\| _{L^\infty (B_{\frac r2})}\leq C\left\|
u_2^{+}\right\| _{L^1(B_r)}.
\]
On the other hand, it is clear that for some $t>0$,
$$
\int_{B_r(0)}\frac{1}{|x|^{2t\alpha}}dx\leq C.
$$
Hence we can choose $s=\frac t{t+1}\in (0,1)$ when $\alpha>0$ and $s=1$ when $\alpha=0$ such that
$$
2s\int_{B_r}u^+dx\leq \int_{B_r}e^{2su}dx\leq (\int_{B_r}|x|^{2\alpha}e^{2u}dx)^s
(\int_{B_r}|x|^{-2t\alpha}dx)^{1-s}<\infty.
$$
Then by using $u_2^{+}\leq u^{+}+\left| u_1\right|$ we obtain that
$u_2^{+}\in{L^1(B_r)}$ and consequently

\begin{equation}
\left\| u_2^{+}\right\| _{L^\infty (B_{\frac r2})}<\infty.
\label{55}
\end{equation}
Next we rewrite $f_1$ as
\[
f_1=2V^2(x)|x|^{2\alpha}e^{2u_2(x)}e^{2u_1(x)}-V(x)|x|^{\alpha}e^{u_2(x)}e^{u_1(x)}|\Psi|^2
\]
From (\ref{66}) and (\ref{55}) we have $f_1\in L^{1+\varepsilon
}(B_{\frac r2})$ for some $\varepsilon
>0.$ Hence the standard elliptic estimates imply that

\[
\left\| u^{+}\right\| _{L^\infty (B_{\frac r4})}\leq C\left\|
u^{+}\right\| _{L^1(B_r)}+C\left\| f_1\right\| _{L^{1+\varepsilon
}(B_{\frac r2})}<\infty .
\]

Since $u^+ \in L^{\infty}(B_{\frac r4})$, then the right hand of
equation $\slashiii{D} \Psi =-V(x)|x|^{\alpha}e^u\Psi$ is in $L^4(\Gamma (\Sigma
B_{\frac r4}))$. Hence $\Psi \in C^0(\Gamma (\Sigma B_{\frac
r4}))$ and especially $|\Psi| \in L^{\infty}(B_{\frac r4})$.
\qed

\
\

Next we  discuss the blow-up behavior of a sequence of solutions $(u_n,\Psi_n)$ satisfying (\ref{Eq-Sn}) and (\ref{Eq-Cn}). First, we study the small energy regularity, i.e. when the energy $\int_{B_r}|x|^{2\alpha_n}e^{2u_n}dx$ is small enough,
$u_n$ will be uniformly bounded from above. Our Lemma is:
\begin{lm}\label{lmuni}
Let $0<\varepsilon _0<\pi$ be a constant.  For any
 sequence of solutions $(u_n,\Psi_n)$ to (\ref{Eq-Sn}) with
\[
\int_{B_r}|x|^{2\alpha_n}e^{2u_n}dx<\varepsilon _0,\qquad \int_{B_r}\left| \Psi
_n\right| ^4dx<C
\]
for some fixed constant $C>0$, we have that $\left\| u_n^{+}\right\|_{L^{\infty}
(B_{\frac r4})}$ is uniformly bounded.
\end{lm}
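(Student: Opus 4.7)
The plan is to adapt the three-step regularity argument of Proposition \ref{prop-a} to the sequential setting, turning each step into a uniform-in-$n$ estimate by exploiting the smallness of the energy. Decompose $u_n = u_{1,n} + u_{2,n}$, where $u_{1,n}$ solves the Dirichlet problem $-\Delta u_{1,n} = f_n$ on $B_r$ with zero boundary values and $f_n := 2V^2|x|^{2\alpha_n}e^{2u_n} - V|x|^{\alpha_n}e^{u_n}|\Psi_n|^2$. Using $V\leq b$ and H\"older's inequality on the cross term,
\begin{equation*}
\|f_n\|_{L^1(B_r)} \leq 2b^2\varepsilon_0 + b\sqrt{\varepsilon_0 C},
\end{equation*}
which, for $\varepsilon_0$ small (the hypothesis $\varepsilon_0<\pi$ encodes the admissible range), stays strictly below the Brezis--Merle threshold; consequently there exist $p>1$ and $C'$ independent of $n$ with $\int_{B_r}e^{p|u_{1,n}|}dx\leq C'$. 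In particular $u_{1,n}$ is uniformly bounded in $L^q(B_r)$ for some $q>1$.

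Next, $u_{2,n}:=u_n-u_{1,n}$ is harmonic, so the mean value property gives $\|u_{2,n}^+\|_{L^\infty(B_{r/2})}\leq C\|u_{2,n}^+\|_{L^1(B_r)}\leq C(\|u_n^+\|_{L^1(B_r)}+\|u_{1,n}\|_{L^1(B_r)})$. The second summand is controlled by the previous step. For $\|u_n^+\|_{L^1(B_r)}$, I would use the integrability trick from Proposition \ref{prop-a}: pick $t>0$ small enough so that $|x|^{-2t\alpha_n}$ is uniformly integrable on $B_r$ (possible since $\alpha_n\to\alpha\geq 0$), and set $s=t/(1+t)\in(0,1)$, with $s=1$ when $\alpha=0$. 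Then
\begin{equation*}
2s\int_{B_r}u_n^+\,dx \leq \int_{B_r}e^{2su_n}\,dx \leq \left(\int_{B_r}|x|^{2\alpha_n}e^{2u_n}\,dx\right)^{s}\left(\int_{B_r}|x|^{-2t\alpha_n}\,dx\right)^{1-s} \leq C\varepsilon_0^{s},
\end{equation*}
uniformly in $n$, which yields a uniform $L^\infty(B_{r/2})$ bound on $u_{2,n}^+$.

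Finally, rewrite
\begin{equation*}
f_n = 2V^2|x|^{2\alpha_n}e^{2u_{2,n}}e^{2u_{1,n}} - V|x|^{\alpha_n}e^{u_{2,n}}e^{u_{1,n}}|\Psi_n|^2.
\end{equation*}
On $B_{r/2}$ the factors $e^{2u_{2,n}^+}$ and $e^{u_{2,n}^+}$ are uniformly bounded, $e^{2u_{1,n}}$ lies in $L^{p/2}$ uniformly with $p/2>1$, and $|\Psi_n|^2$ is uniformly bounded in $L^2$; H\"older then shows that $f_n$ is uniformly bounded in $L^{1+\delta}(B_{r/2})$ for some $\delta>0$. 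Standard interior elliptic estimates applied to $-\Delta u_n = f_n$ give
\begin{equation*}
\|u_n^+\|_{L^\infty(B_{r/4})} \leq C\|u_n^+\|_{L^1(B_{r/2})} + C\|f_n\|_{L^{1+\delta}(B_{r/2})},
\end{equation*}
which is uniformly bounded. The delicate point is the uniform Brezis--Merle step: it requires $\|f_n\|_{L^1}$ to stay strictly below $4\pi$, which is precisely what constrains how small the admissible threshold $\varepsilon_0$ can be; by contrast, the conical weight $|x|^{2\alpha_n}$ causes no new difficulty because the auxiliary integrability of $|x|^{-2t\alpha_n}$ on $B_r$ is uniform in $n$ thanks to $\alpha_n\to\alpha$.
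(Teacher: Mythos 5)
Your overall decomposition $u_n = u_{1,n} + u_{2,n}$ and the weighted H\"older trick giving a uniform bound on $\int_{B_r}u_n^+$ are correct and match the paper's proof, which itself records only the uniform $L^1$ control of $u_n^+$ (the genuinely new point over the smooth case $\alpha_n\equiv0$) and then defers to Lemma~4.4 of \cite{JWZ} for the Brezis--Merle machinery. You spell out that machinery, but there is a genuine gap in the uniform Brezis--Merle step. You bound $\|f_n\|_{L^1(B_r)} \leq 2b^2\varepsilon_0 + b\sqrt{\varepsilon_0 C}$ and assert that the hypothesis $\varepsilon_0 < \pi$ keeps this below the Brezis--Merle threshold. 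That cannot be right: $C$ is an arbitrary fixed bound on $\int|\Psi_n|^4$, not assumed small, so $b\sqrt{\varepsilon_0 C}$ can exceed $4\pi$ for any $\varepsilon_0<\pi$. Moreover the later step ``$e^{2u_{1,n}}$ lies in $L^{p/2}$ with $p/2>1$'' requires the Brezis--Merle exponent $p$ to be $>2$, hence $\|f_n\|_{L^1}<2\pi$ rather than merely $<4\pi$. Applying Brezis--Merle to the full $f_n$ therefore does not give the claimed uniform integrability.

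The correct route is to exploit the sign structure of $f_n$. Write $f_n=f_n^{(1)}+f_n^{(2)}$ with $f_n^{(1)}=2V^2|x|^{2\alpha_n}e^{2u_n}\geq0$ and $f_n^{(2)}=-V|x|^{\alpha_n}e^{u_n}|\Psi_n|^2\leq0$, and solve two separate zero-boundary-value Dirichlet problems $-\Delta u_{1,n}^{(j)}=f_n^{(j)}$ on $B_r$. By the maximum principle $u_{1,n}^{(2)}\leq0$, so $e^{u_{1,n}^{(2)}}\leq1$ and $e^{2u_{1,n}^{(2)}}\leq1$; the possibly large $L^1$ norm $b\sqrt{\varepsilon_0 C}$ of $f_n^{(2)}$ never enters an exponential estimate. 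Brezis--Merle is applied only to $u_{1,n}^{(1)}$, whose source has $L^1$ norm at most $2b^2\varepsilon_0$; this is exactly the quantity the smallness hypothesis on $\varepsilon_0$ is designed to control, and it yields a uniform bound on $\int_{B_r}e^{k|u_{1,n}^{(1)}|}dx$ for some $k>2$. Writing $e^{2u_n}=e^{2u_{2,n}}e^{2u_{1,n}^{(1)}}e^{2u_{1,n}^{(2)}}$ (last factor $\leq1$) and similarly $e^{u_n}=e^{u_{2,n}}e^{u_{1,n}^{(1)}}e^{u_{1,n}^{(2)}}$, your H\"older computation then places $f_n$ uniformly in $L^{1+\delta}(B_{r/2})$, and the concluding elliptic estimate goes through as you wrote it.
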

\begin{proof} We are in the same situation as in Proposition \ref{prop-a}. When $\alpha_n>0$, we can no longer
use the inequality $2\int {u^+_n}<\int {e^{2u_n}}$  to get  the uniform bound of the $L^1$-integral of $u^+_n$. But notice that there exists a uniform constant $t>0$ such that for all $n$
$$
\int_{B_r}\frac{1}{|x|^{2t\alpha_n}}dx\leq C,
$$  since $\alpha_n\rightarrow \alpha$ and $\alpha\geq 0$. Consequently we obtain $s=\frac t{t+1}\in (0,1)$
$$
2s\int_{B_r}u^+_ndx\leq \int_{B_r}e^{2su_n}dx\leq (\int_{B_r}|x|^{2\alpha_n}e^{2u_n}dx)^s
(\int_{B_r}|x|^{-2t\alpha_n}dx)^{1-s}<C.
$$
Then by a similar argument as in the proof of Lemma 4.4 in \cite{JWZ} we can prove this Lemma.
\end{proof}

\

When the energy $\int_{B_r}|x|^{2\alpha_n}e^{2u_n}dx$ is large, the blow-up phenomenon may occur as in the case of a smooth domain.

\

\noindent{\bf Proof of Theorem \ref{mainthm}}: By using Lemma \ref{lmuni} and applying a similar argument as in the proof of Theorem 5.1 in \cite{JWZ}, we can easily prove this theorem.\qed

\

\begin{rem}\label{rem3.4}Let $v_n=u_n+\alpha_n\log|x|$, then $(v_n, \Psi_n)$ satisfies
 \begin{equation*}
\left\{
\begin{array}{rcl}
-\Delta v_n(x) &=& 2V^2(x)e^{2v_n(x)}-V(x)e^{v_n(x)}|\Psi_n|^2    - 2 \pi \alpha_n \delta_{p=0} \quad  \\
\slashiii{D}\Psi_n &=& -V(x)e^{v_n(x)}\Psi_n \end{array}     \   \text {in } B_{r},
\right.
\end{equation*}
with the energy condition
\begin{equation*}
\int_{B_r}e^{2v_n}dx<C,\text{ and }\int_{B_r}\left| \Psi _n\right|
^4dx<C.
\end{equation*}
Then the two blow-up sets of $u_n$ and $v_n$ are the same, by using similar arguments as in \cite{bt}.
\end{rem}

\
\

\section{The Pohozaev identity and removability of local singularities}\label{pohozaev}

This section is the heart of our paper. We shall show that a local singularity is removable if and only if the Pohozaev identity is satisfied. To express this result in compact form, we start by defining  a constant that is associated to the equations \eqref{Eq-L} with the constraint \eqref{Co-L}.

\begin{df} \label{poho-const}Let $(u,\Psi)\in C^2(B_r\backslash\{0\})\times C^2(\Gamma(\Sigma (B_r\backslash\{0\})))$ be a solution of \eqref{Eq-L} and \eqref{Co-L}. For $0<R<r$, we define the {\em Pohozaev constant} with respect to the equations \eqref{Eq-L} with the constraint \eqref{Co-L}
 \begin{eqnarray*}
C(u,\Psi)&:=& R\int_{\partial B_R(0)} |\frac {\partial u}{\partial \nu}|^2-\frac 12|\nabla u|^2d\sigma \\
&-&(1+\alpha)\int_{B_R(0)}(2V^2(x)|x|^{2\alpha}e^{2u}-V(x)|x|^{\alpha}e^u|\Psi|^2)dx \\
& & +R\int_{\partial B_R(0)}V^2(x)|x|^{2\alpha}e^{2u}d\sigma-\frac 12\int_{\partial
B_R(0)}\la\frac {\partial \Psi}{\partial \nu}, x\cdot\Psi\ra +\la
x\cdot\Psi, \frac {\partial \Psi}{\partial \nu}\ra d\sigma\\
& & -\int_{B_R(0)}(|x|^{2\alpha}e^{2u}x\cdot\nabla (V^2(x))-|x|^{\alpha}e^u|\Psi|^2x\cdot \nabla
V(x))dx
\end{eqnarray*}
where $\nu$ is the outward normal vector of $\partial B_R(0)$
\end{df}

It is clear that $C(u,\Psi)$ is independent of $R$ for $0<R<r$.

Thus, the vanishing of the Pohozaev constant $C(u,\Psi)$ is equivalent to the {\em Pohozaev identity}
\begin{eqnarray}\label{poho}
&&R\int_{\partial B_R(0)} |\frac {\partial u}{\partial \nu}|^2-\frac 12|\nabla u|^2d\sigma \nn\\
&=&(1+\alpha)\int_{B_R(0)}(2V^2(x)|x|^{2\alpha}e^{2u}-V(x)|x|^{\alpha}e^u|\Psi|^2)dx \nn\\
& & -R\int_{\partial B_R(0)}V^2(x)|x|^{2\alpha}e^{2u}d\sigma+\frac 12\int_{\partial
B_R(0)}(\la\frac {\partial \Psi}{\partial \nu}, x\cdot\Psi\ra +\la
x\cdot\Psi, \frac {\partial \Psi}{\partial \nu}\ra ) d\sigma   \nn\\
& & +\int_{B_R(0)}(|x|^{2\alpha}e^{2u}x\cdot\nabla (V^2(x))-|x|^{\alpha}e^u|\Psi|^2x\cdot \nabla
V(x))dx
\end{eqnarray}
for  a solution $(u,\Psi)\in C^2(B_r)\times C^2(\Gamma(\Sigma B_r))$  of (\ref{Eq-L}) and (\ref{Co-L}).

We can now formulate the main result of this section. This result says that a local singularity is removable iff the Pohozaev identity \eqref{poho} holds, that is, iff the Pohozaev constant vanishes.
\begin{thm}(Removability of a local singularity)\label{thm-sigu-move1}
Let $(u,\Psi)\in C^2(B_r\setminus\{0\})\times C^2(\Gamma(\Sigma (B_r\setminus \{0\})))$  be  a solution of (\ref{Eq-L}) and (\ref{Co-L}).
Then there is a constant $\gamma < 2\pi (1+\a)$ such that
\begin{eqnarray*}
u(x)=- \frac{\gamma}{2\pi}{\rm log} |x| + h, \quad {\rm near }\ 0,
\end{eqnarray*}
where $h$ is bounded near $0$. The Pohozaev constant  $C(u, \Psi) $ and $\gamma$ satisfy:
\begin{eqnarray*}
 C(u, \Psi)=\frac{\gamma^2}{4\pi}.
\end{eqnarray*}
In particular, $(u,\Psi)\in C^2(B_r)\times C^2(\Gamma(\Sigma B_r))$, i.e. the local singularity of $(u,\Psi)$ is removable,   iff $C(u,\Psi)=0$.
\end{thm}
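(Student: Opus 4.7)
The plan is to reduce the theorem to an asymptotic expansion of $u$ near the origin of the form $u(x)=-\frac{\gamma}{2\pi}\log|x|+h(x)$ with $h$ bounded, then to evaluate the Pohozaev constant on small concentric balls via this expansion, and finally to extract removability from the $\gamma=0$ case. First I would verify that $(u,\Psi)$ is a classical solution on $B_r\setminus\{0\}$ (by the bootstrap of Proposition \ref{prop-a} applied on small annuli), so that $C(u,\Psi)$ is well defined for each $R\in(0,r)$ and is independent of $R$; the latter is the usual integration by parts on an annulus $B_R\setminus B_\rho$, pairing the $u$--equation with $x\cdot\nabla u$ and using $\slashiii{D}\Psi=-V|x|^{\alpha}e^u\Psi$ to absorb the spinor contributions into the boundary. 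Set
\[
f(x):=2V^2(x)|x|^{2\alpha}e^{2u(x)}-V(x)|x|^{\alpha}e^{u(x)}|\Psi|^2,
\]
so that $-\Delta u=f$ on $B_r\setminus\{0\}$ and $f\in L^1(B_r)$ by \eqref{Co-L}.

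Next I would derive the expansion. Let $u_1$ solve $-\Delta u_1=f$ on $B_r$ with $u_1=0$ on $\partial B_r$. Since $f\in L^1(B_r)$, the Brezis--Merle estimate gives $e^{k|u_1|}\in L^1(B_r)$ for some $k>1$. Combining this with the weighted integrability $\int_{B_r}|x|^{2\alpha}e^{2u}<\infty$ through the H\"older trick of Proposition \ref{prop-a} (pairing $e^{2su}$ against $|x|^{-2t\alpha}$ for suitable small $s,t$) yields an $L^1$ bound on $u^+$, and feeding this back through the equation promotes $u_1$ to $L^\infty_{\mathrm{loc}}(B_r)$. The remainder $u_2:=u-u_1$ is harmonic on $B_r\setminus\{0\}$, and by the classical classification of isolated singularities of planar harmonic functions
\[
u_2(x)=-\frac{\gamma}{2\pi}\log|x|+h_0(x)
\]
for some $\gamma\in\R$ and some $h_0$ harmonic on $B_r$. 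Hence $u(x)=-\frac{\gamma}{2\pi}\log|x|+h(x)$ with $h:=u_1+h_0\in L^\infty_{\mathrm{loc}}(B_r)$. The integrability $\int|x|^{2\alpha}e^{2u}<\infty$ applied to this expansion forces $|x|^{2\alpha-\gamma/\pi}$ to be integrable at $0$, i.e.\ $\gamma<2\pi(1+\alpha)$; a further elliptic bootstrap using this improved bound gives $h\in C^{1,\beta}_{\mathrm{loc}}$, and the Dirac equation provides algebraic decay of $|\Psi|$ at $0$.

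With the expansion in hand, I would compute $C(u,\Psi)$ by sending $R\to 0$ in its definition (it is independent of $R$, so the limit equals its value). On $\partial B_R$, $\partial_\nu u=-\frac{\gamma}{2\pi R}+\partial_\nu h$, and a direct computation gives
\[
R\int_{\partial B_R}\!\Big(|\partial_\nu u|^2-\tfrac12|\nabla u|^2\Big)\,d\sigma=\frac{\gamma^2}{4\pi}+o(1),
\]
since the cross term reduces to $-\frac{\gamma}{2\pi R}\int_{\partial B_R}\partial_\nu h\,d\sigma=\frac{\gamma}{2\pi}\int_{B_R}f\,dx\to 0$ via $-\Delta h=f$ and the divergence theorem. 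All remaining ingredients of $C(u,\Psi)$ vanish in the limit: the volume integrals are absolutely convergent by \eqref{Co-L} and $V\in C^{1,\beta}$; the boundary contribution $R\int_{\partial B_R}V^2|x|^{2\alpha}e^{2u}\,d\sigma$ is of order $R^{2+2\alpha-\gamma/\pi}\to 0$ precisely because $\gamma<2\pi(1+\alpha)$; and the boundary spinor term decays by the algebraic control on $|\Psi|$. This yields $C(u,\Psi)=\gamma^2/(4\pi)$.

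The removability follows at once: $C(u,\Psi)=0$ iff $\gamma=0$ iff $u=h\in L^\infty_{\mathrm{loc}}(B_r)$, in which case Proposition \ref{prop-a} applies on the full ball $B_r$ to give $(u,\Psi)\in C^2(B_r)\times C^2(\Gamma(\Sigma B_r))$. Conversely, a removable singularity makes \eqref{poho} hold on $B_R$ by integration by parts on the full ball, so $C(u,\Psi)=0$. I expect the main obstacle to be the upgrade of $u_1$ from the Brezis--Merle bound $e^{k|u_1|}\in L^1$ to $u_1\in L^\infty_{\mathrm{loc}}$ in the presence of the weight $|x|^{2\alpha}$ and the quartic spinor term; the spinor contribution must be threaded carefully through the Dirac equation, in the same spirit as the proof of Proposition \ref{prop-a}.
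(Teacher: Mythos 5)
Your overall strategy parallels the paper's: decompose $u$ into a potential piece plus a harmonic piece, identify a logarithmic singularity with coefficient $\gamma$, show $\gamma<2\pi(1+\alpha)$, pass $R\to 0$ in the Pohozaev constant to get $C(u,\Psi)=\gamma^2/(4\pi)$, and conclude removability. The Dirichlet-problem decomposition $u=u_1+u_2$ you use is equivalent (up to a harmonic correction) to the paper's Newtonian-potential decomposition, and the classification of isolated singularities of planar harmonic functions is a legitimate substitute for the paper's potential-theory argument $\lim u/(-\log|x|)=\gamma/(2\pi)$. So the route is essentially the same, modulo cosmetic differences.

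However, there is a genuine gap where you deduce $\gamma<2\pi(1+\alpha)$. You argue that "feeding this back through the equation promotes $u_1$ to $L^\infty_{\mathrm{loc}}(B_r)$", so that $h=u_1+h_0$ is bounded, and then the integrability of $|x|^{2\alpha}e^{2u}\sim|x|^{2\alpha-\gamma/\pi}e^{2h}$ forces the strict inequality. This is circular: to upgrade $u_1$ from the Brezis--Merle bound $u_1\in L^p$ for all $p$ to $u_1\in L^\infty_{\mathrm{loc}}$, you need $f\in L^{1+\varepsilon}$ near $0$, which requires controlling $|x|^{2\alpha}e^{2u}$ and $|x|^{\alpha}e^u|\Psi|^2$ pointwise near $0$. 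That control, as the paper makes precise (by checking that $\max\{s_1,s_2\}<2$ with $s_1=\gamma/\pi-2\alpha$, $s_2=\gamma/(2\pi)-\alpha+1$), is itself equivalent to $\gamma<2\pi(1+\alpha)$. Without the strict inequality as an input, the only conclusion available from $f\in L^1$ and integrability is $\gamma\le 2\pi(1+\alpha)$; if $\gamma=2\pi(1+\alpha)$, the function $h$ could in principle tend to $-\infty$ and rescue integrability of $|x|^{-2}e^{2h}$, and nothing you have established rules this out. The paper closes this gap with a separate argument: it first obtains only $\gamma\le 2\pi(1+\alpha)$, then invokes the spinor decay from Lemma~\ref{asy-phi} together with the argument of Proposition~2.6 of \cite{JWZZ1} to exclude equality. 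You need some version of that step. A secondary, related omission: your asymptotic control of the boundary spinor term $\int_{\partial B_R}\langle\partial_\nu\Psi,x\cdot\Psi\rangle\,d\sigma$ rests on the decay $|\Psi|\lesssim|x|^{\delta_0-1/2}$, $|\nabla\Psi|\lesssim|x|^{\delta_0-3/2}$ from Lemma~\ref{asy-phi}, whose hypothesis $e^{2v}=O(|x|^{-2-2\alpha+\varepsilon})$ again presupposes the strict inequality, so that part of your limiting computation is also contingent on fixing the gap above.
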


In the remainder of this section, we shall prove the two directions of Theorem \ref{thm-sigu-move1}.
We shall first show that for smooth solutions, the Pohozaev identity \eqref{poho} holds.


\begin{prop}\label{prop-poho}
Let $(u,\Psi)\in C^2(B_r)\times C^2(\Gamma(\Sigma B_r))$ be  a solution of (\ref{Eq-L}) and (\ref{Co-L}). Then,
 for any $0<R<r$,  the Pohozaev type identity \eqref{poho} holds.
\end{prop}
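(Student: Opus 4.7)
The Pohozaev identity \eqref{poho} will be obtained by the classical radial multiplier method: I pair the scalar equation with $x\cdot\nabla u$ and process the spinor coupling via the Dirac equation, integrate over $B_R$, and use Green's identities. Throughout, the dimension being exactly two is crucial: it makes the bulk $|\nabla u|^2$ contribution disappear from the scalar Pohozaev identity, and it lets the Clifford contractions with the radial vector field close up cleanly.

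For the scalar equation $-\Delta u=2V^2|x|^{2\alpha}e^{2u}-V|x|^\alpha e^u|\Psi|^2$, I will multiply both sides by $x\cdot\nabla u$ and integrate over $B_R$. Integrating the Laplacian by parts produces, in dimension $n=2$, exactly $R\int_{\partial B_R}(|\partial_\nu u|^2-\tfrac12|\nabla u|^2)\,d\sigma$, the left-hand side of \eqref{poho}, the interior $(1-n/2)\int|\nabla u|^2$ term vanishing precisely because $n=2$. On the right-hand side I use $e^{2u}(x\cdot\nabla u)=\tfrac12\,x\cdot\nabla e^{2u}$ and $e^u(x\cdot\nabla u)=x\cdot\nabla e^u$ and integrate by parts against the weights $V^2(x)|x|^{2\alpha}x$ and $V(x)|x|^{\alpha}x$, respectively. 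Using $x\cdot\nabla|x|^{2\alpha}=2\alpha|x|^{2\alpha}$ and $\operatorname{div}(x)=2$, these computations produce the boundary terms $R\int_{\partial B_R}V^2|x|^{2\alpha}e^{2u}$ and $-R\int_{\partial B_R}V|x|^\alpha e^u|\Psi|^2$, the coefficient $2(1+\alpha)$ in front of the bulk Liouville term, and the $\nabla V^2$ and $\nabla V$ bulk terms appearing on the right of \eqref{poho}, together with a residual bulk contribution $\int_{B_R}V|x|^\alpha e^u\,x\cdot\nabla|\Psi|^2\,dx$ and an extra $(2+\alpha)\int_{B_R}V|x|^\alpha e^u|\Psi|^2$ that still need to be processed.

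The remaining step, which is the main technical obstacle, is to convert this residual spinor bulk term into the boundary pairing $\tfrac12\int_{\partial B_R}(\la\partial_\nu\Psi,x\cdot\Psi\ra+\la x\cdot\Psi,\partial_\nu\Psi\ra)\,d\sigma$ appearing in \eqref{poho}, and simultaneously to absorb the superfluous $\int V|x|^\alpha e^u|\Psi|^2$ into the correct coefficient $(1+\alpha)$. For this I will use the Dirac equation $\slashiii{D}\Psi=-V|x|^\alpha e^u\Psi$ together with the flat-space commutator identity
\begin{equation*}
\slashiii{D}(x\cdot\Psi)=-2\Psi-x\cdot\slashiii{D}\Psi-2\nabla_X\Psi,\qquad X=x_i\partial_i,
\end{equation*}
valid in $\mathbb{R}^2$, and the skew-Hermiticity of Clifford multiplication by real vectors, $\la x\cdot\varphi,\chi\ra+\la\varphi,x\cdot\chi\ra=0$. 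Together these yield a pointwise divergence identity whose integral over $B_R$, combined with the boundary relation $x=R\nu$ on $\partial B_R$ and $\la e_i\cdot\Psi,\xi\ra\nu_i=\la\nu\cdot\Psi,\xi\ra$, turns $\int_{B_R}V|x|^\alpha e^u\,x\cdot\nabla|\Psi|^2\,dx$ into the boundary pairing above with the correct coupling coefficient. Assembling the contributions from all the previous steps then gives \eqref{poho}. The careful bookkeeping of the Clifford signs and conventions in this final step is where one has to be most attentive; the other pieces are routine integration-by-parts manipulations.
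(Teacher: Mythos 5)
Your scalar-equation step is identical to the paper's: multiply by $x\cdot\nabla u$, integrate by parts, and use $n=2$ to kill the bulk $|\nabla u|^2$ term; this is correct. Where you genuinely diverge from the paper is in the spinor step. The paper invokes the Schr\"odinger--Lichnerowicz formula $\slashiii{D}^2=-\Delta$ to pass to the second-order equation $\Delta\Psi=\nabla(V|x|^\alpha e^u)\cdot\Psi-V^2|x|^{2\alpha}e^{2u}\Psi$, pairs it with $x\cdot\Psi$, and integrates by parts twice; you propose to stay first-order and use the commutator of $\slashiii{D}$ with the radial vector field. Your cited identity $\slashiii{D}(x\cdot\Psi)=-n\Psi-x\cdot\slashiii{D}\Psi-2\nabla_X\Psi$ (with $n=2$) is correct, as is the skew-Hermiticity, so the ingredients are all valid. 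One caution: pairing \emph{that particular} identity with $\Psi$, multiplying by $f:=V|x|^\alpha e^u$, and applying the weighted Green formula for $\slashiii{D}$ only reproduces the ordinary integration-by-parts identity for $\int_{B_R}f\,x\cdot\nabla|\Psi|^2$, since the boundary term $\langle\nu\cdot x\cdot\Psi,\Psi\rangle=-R|\Psi|^2$ carries no derivatives of $\Psi$ and the argument becomes circular. The first-order route that actually produces the desired boundary pairing $\tfrac12(\langle\partial_\nu\Psi,x\cdot\Psi\rangle+\langle x\cdot\Psi,\partial_\nu\Psi\rangle)$ is to compute the flux of the vector field $Y_i=\langle\nabla_X\Psi,e_i\cdot\Psi\rangle+\langle e_i\cdot\Psi,\nabla_X\Psi\rangle$: one finds $\operatorname{div}Y=2f|\Psi|^2+2(x\cdot\nabla f)|\Psi|^2$, using the operator commutator $[\slashiii{D},\nabla_X]=\slashiii{D}$ (equivalent to, but not literally, your $\slashiii{D}(x\cdot\Psi)$ identity) together with the Dirac equation; integrating this and combining with the scalar step gives exactly equation \eqref{poho}. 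So the high-level plan is sound and is a genuine first-order alternative to the paper's Lichnerowicz computation, but you should replace the $\slashiii{D}(x\cdot\Psi)$ manipulation by the divergence identity for $Y$ (or derive the latter from the former); otherwise the route as written stalls at a tautology.
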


The  case where  $\alpha=0$ and $V\equiv1$ has already been treated in \cite{JWZZ1}.
\
\

\begin{proof}
For $x\in \R^2$, we put  $x=x_1e_1+x_2e_2$. We
multiply all terms in  (\ref{Eq-L}) by $x\cdot \nabla u$ and
integrate over $B_R(0)$. We  obtain
\[\int_{B_R(0)}\Delta u x\cdot \nabla udx = R\int_{\partial B_R(0)}
|\frac {\partial u}{\partial \nu}|^2-\frac 12|\nabla u|^2d\sigma,\]
and
\begin{eqnarray*}
\int_{B_R(0)}2V^2(x)|x|^{2\alpha}e^{2u}x\cdot \nabla udx
&=& R\int_{\partial B_R(0)}V^2(x)|x|^{2\alpha}e^{2u}d\sigma -(2+2\alpha)\int_{B_R(0)}V^2(x)|x|^{2\alpha}e^{2u}dx\\
& &-\int_{B_R(0)}x\cdot \nabla (V^2(x))|x|^{2\alpha}e^{2u}dx,
\end{eqnarray*}
 and
\begin{eqnarray*}
 \int_{B_R(0)}V(x)|x|^{\alpha}e^u|\Psi|^2 x\cdot \nabla udx
&=& R\int_{\partial B_R(0)}V(x)|x|^\alpha e^u|\Psi|^2d\sigma-\int_{B_R(0)}|x|^{\alpha}e^ux\cdot \nabla
(V(x)|\Psi|^2)dx\\
& &-(2+\alpha)\int_{B_R(0)}V(x)|x|^\alpha e^u|\Psi|^2dx.
\end{eqnarray*}
Therefore we get
\begin{eqnarray}\label{4.2}
&&R\int_{\partial B_R(0)} |\frac {\partial u}{\partial \nu}|^2-\frac
12|\nabla u|^2d\sigma \nonumber\\
&=& (2+2\alpha)\int_{B_R(0)}V^2(x)|x|^{2\alpha}e^{2u}dx-(2+\alpha)\int_{B_R(0)}V(x)|x|^\alpha e^u|\Psi|^2dx \nonumber\\
& & -R\int_{\partial B_R(0)}V^2(x)|x|^{2\alpha}e^{2u}d\sigma+R\int_{\partial B_R(0)}V(x)|x|^\alpha e^u|\Psi|^2d\sigma \nonumber\\
& &+\int_{B_R(0)}|x|^{2\alpha}e^{2u}x\cdot \nabla (V^2(x))-|x|^\alpha e^ux\cdot \nabla (V(x)|\Psi|^2)dx.
\end{eqnarray}

On the other hand, by the Schr\"{o}dinger-Lichnerowicz formula $\slashiii{D}^2=-\Delta $ on $\R^2$, we have
\begin{equation}\label{4.4}
\Delta
\Psi=\sum^2_{\alpha=1}\nabla_{e_\alpha}(V(x)|x|^{\alpha}e^u)e_\alpha\cdot\Psi-V^2(x)|x|^{2\alpha}e^{2u}\Psi.
\end{equation}
Here $\cdot$ is the Clifford multiplication and $\{e_1,e_2\}$ is the local orthonormal basis on $\R^2$. Using the Clifford multiplication relation
\[
e_i\cdot e_j+e_j\cdot e_i=-2\delta _{ij},\text{ for }1\leq i,j\leq 2
\] and
\[
\left\langle \Psi ,\varphi \right\rangle =\left\langle e_i\cdot \Psi
,e_i\cdot \varphi \right\rangle
\]
for any spinors $\Psi ,\varphi \in \Gamma (\Sigma M)$, we  know that
\begin{equation}\label{4.3}
\la\Psi,e_i\cdot\Psi\ra +\la e_i\cdot\Psi,\Psi\ra=0
\end{equation}
for any $i=1,2$.
Then we multiply (\ref{4.4}) by $x\cdot\Psi$ and integrate over $B_R(0)$ to obtain
\begin{eqnarray*}&& \int_{B_R(0)}\la\Delta\Psi,x\cdot\Psi\ra dx\\
&=& \int_{B_R(0)}
\sum^2_{\alpha,\beta=1}\la\nabla_{e_\alpha}(V(x)|x|^{\alpha}e^u)e_\alpha\cdot\Psi,e_\beta\cdot\Psi\ra
 x_\beta -V^2(x)|x|^{2\alpha}e^{2u}\la\Psi,x\cdot \Psi\ra dx,
\end{eqnarray*}
and
\begin{eqnarray*}&&\int_{B_R(0)}\la x\cdot \Psi,\Delta\Psi\ra dx\\
&=& \int_{B_R(0)}\sum^2_{\alpha,\beta=1}\la e_{\beta}\cdot
\Psi,\nabla_{e_\alpha}(V(x)|x|^{\alpha}e^u)e_\alpha\cdot\Psi\ra
 x_\beta-V^2(x)|x|^{2\alpha}e^{2u}\la x\cdot \Psi, \Psi\ra dx.
\end{eqnarray*}

\noindent By integration by parts, we get
\begin{eqnarray*}
& & \int_{B_R(0)}\la\Delta\Psi,x\cdot\Psi\ra dx = \int_{\partial B_R(0)}\la\frac {\partial \Psi}{\partial
\nu},x\cdot\Psi\ra d\sigma-\int_{B_R(0)} V(x)|x|^\alpha e^u|\Psi|^2
dx-\int_{B_R(0)}\la\nabla \Psi,x\cdot\nabla \Psi\ra dx,
\end{eqnarray*}
and similarly we have
\begin{eqnarray*}
& & \int_{B_R(0)}\la x\cdot\Psi,\Delta \Psi\ra dx  = \int_{\partial B_R(0)}\la
x\cdot \Psi, \frac {\partial \Psi}{\partial \nu}\ra
d\sigma-\int_{B_R(0)}V(x)|x|^\alpha e^u|\Psi|^2 dx-\int_{B_R(0)}\la x\cdot\nabla\Psi,\nabla \Psi\ra dx.
\end{eqnarray*}

\noindent Furthermore we also have
\begin{eqnarray*}
&&\int_{B_R(0)}\sum^2_{\alpha,\beta=1}\la\nabla_{e_\alpha}(V(x)|x|^\alpha e^u)e_\alpha\cdot\Psi,e_\beta\cdot\Psi\ra
 x_\beta dx\\
&& +\int_{B_R(0)}\sum^2_{\alpha,\beta=1}\la e_{\beta}\cdot
\Psi,\nabla_{e_\alpha}(V(x)|x|^\alpha e^u)e_\alpha\cdot\Psi\ra
 x_\beta dx\\
 &=&2\int_{B_R(0)}\sum^2_{\alpha=1}\la\nabla_{e_\alpha}(V(x)|x|^\alpha e^u)e_\alpha\cdot\Psi,e_\alpha\cdot\Psi\ra
 x_\alpha dx\\
 &=&2\int_{B_R(0)}x\cdot\nabla(V(x)|x|^\alpha e^u)|\Psi|^2dx\\
 &=&-2\int_{B_R(0)}V(x)|x|^\alpha e^u x\cdot\nabla(|\Psi|^2)dx-4\int_{B_R(0)}V(x)|x|^\alpha e^u|\Psi|^2dx
\\& & +2R\int_{\partial
 B_R(0)}V(x)|x|^\alpha e^u|\Psi|^2dx.
\end{eqnarray*}

Therefore we obtain
\begin{eqnarray}\label{4.5}
& & R\int_{\partial
B_R(0)}V(x)|x|^\alpha e^u|\Psi|^2d\sigma-\int_{B_R(0)}V(x)|x|^\alpha e^u x\cdot\nabla(|\Psi|^2)dx \\
&= &\frac 12\int_{\partial B_R(0)}\la\frac {\partial \Psi}{\partial
\nu},x\cdot\Psi\ra d\sigma+\frac 12\int_{\partial B_R(0)}\la
x\cdot\Psi, \frac {\partial \Psi}{\partial \nu}\ra
d\sigma+\int_{B_R(0)} V(x)|x|^\alpha e^u|\Psi|^2 dx. \nonumber
\end{eqnarray}
Combining  (\ref{4.2}) and (\ref{4.5}), we obtain our Pohozaev identity \eqref{poho}.
\end{proof}

Proposition \ref{prop-poho} also shows that $C(u,\Psi)=0$ if $(u,\psi)$ is classical solution of \eqref{Eq-L} with the condition \eqref{Co-L} in $B_r$. For the converse, let us start with  a lemma.

\begin{lm}\label{asy-phi} There exists $0<\varepsilon_0 < \pi$ such that if $(v,\phi)$ is a solution of
\begin{equation*}
\left\{
\begin{array}{rcl}
-\Delta v &=& 2h^2(x)|x|^{2\alpha}e^{2v}-h(x)|x|^{\alpha}e^v\left\langle \phi ,\phi
\right\rangle,\\
\slashiii{D}\phi &=&\ds  -h(x)|x|^{\alpha}e^v\phi,
\end{array}\qquad x\in B_{r_0}\backslash\{0\},
\right.
\end{equation*}
where $h(x)$ is a $C^{1,\beta}$ function satisfying $0< a\leq h(x)\leq b$ in $B_{r_0}$ and it satisfies
$$\int_{B_{r_0}}|x|^{2\alpha}e^{2v}dx<\varepsilon_0, \quad \int_{ B_{r_0}}|\phi|^4dx<C,$$
then for any $x\in B_{\frac {r_0}{2}}$ we have
\begin{equation*}
|\phi(x)||x|^{\frac 12}+|\nabla\phi(x)||x|^{\frac 32}\leq
C(\int_{B_{2|x|}}|\phi|^4dx)^{\frac 14}.
\end{equation*}
Furthermore, if we assume that $e^{2v}=O(\frac
{1}{|x|^{2+2\alpha-\varepsilon}})$, then,  for any $x\in B_{\frac {r_0}2}$, we
have
\begin{equation*}
|\phi(x)||x|^{\frac 12}+|\nabla\phi(x)||x|^{\frac 32}\leq
C|x|^{\frac {1}{4C}}(\int_{B_{r_0}}|\phi|^4dx)^{\frac 14},
\end{equation*}
for some positive constant $C$. Here $\varepsilon$ is any
sufficiently small positive number.
\end{lm}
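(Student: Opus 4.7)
The overall strategy for both parts is a rescaling argument centered at an interior point $x_0 \in B_{r_0/2}\setminus\{0\}$. Setting $\delta:=|x_0|/2$, I define
\[
\tilde v(y):=v(x_0+\delta y)+\log\delta,\qquad \tilde\phi(y):=\delta^{1/2}\phi(x_0+\delta y),\qquad y\in B_1.
\]
Using $\Delta_y\tilde v=\delta^2\Delta_xv$ and $\slashiii{D}_y\tilde\phi=\delta^{3/2}\slashiii{D}_x\phi$, a direct computation shows that $(\tilde v,\tilde\phi)$ solves
\[
-\Delta\tilde v=2\tilde h^2 e^{2\tilde v}-\tilde h e^{\tilde v}|\tilde\phi|^2,\qquad \slashiii{D}\tilde\phi=-\tilde h e^{\tilde v}\tilde\phi\quad\text{on }B_1,
\]
with coefficient $\tilde h(y):=h(x_0+\delta y)|x_0+\delta y|^\alpha$ bounded on $B_1$ since $|x_0+\delta y|\in[|x_0|/2,3|x_0|/2]$. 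The conical weight is absorbed into $\tilde h$, so the rescaled equation is effectively the $\alpha=0$ case. The rescaled energies satisfy $\int_{B_1}\tilde h^2 e^{2\tilde v}\,dy=\int_{B_\delta(x_0)}h^2|x|^{2\alpha}e^{2v}\,dx\le b^2\varepsilon_0$ and $\int_{B_1}|\tilde\phi|^4\,dy=\int_{B_\delta(x_0)}|\phi|^4\,dx\le C$.

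\textbf{First estimate.} Choosing $\varepsilon_0$ small enough, the small-energy regularity (the $\alpha=0$ case of Lemma \ref{lmuni}, proved as in Lemma 4.4 of \cite{JWZ}) applies to $\tilde v$ and gives $\|\tilde v^+\|_{L^\infty(B_{1/2})}\le C$. The Dirac potential $\tilde h e^{\tilde v}$ is then bounded on $B_{1/2}$, so interior $W^{1,4}$-regularity for $\slashiii{D}$ combined with the two-dimensional Sobolev embedding $W^{1,4}\hookrightarrow C^0$ yields
\[
|\tilde\phi(0)|+|\nabla\tilde\phi(0)|\le C\|\tilde\phi\|_{L^4(B_{1/2})}.
\]
Undoing the scaling via $|\phi(x_0)|=\delta^{-1/2}|\tilde\phi(0)|$, $|\nabla\phi(x_0)|=\delta^{-3/2}|\nabla\tilde\phi(0)|$, and observing $B_{\delta/2}(x_0)\subset B_{2|x_0|}(0)$, produces the first inequality of the lemma.

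\textbf{Second estimate.} The extra hypothesis $e^{2v}=O(|x|^{-(2+2\alpha-\varepsilon)})$ is equivalent to $|x|^\alpha e^v=O(|x|^{-1+\varepsilon/2})$, which translates, after rescaling, into the smallness
\[
\|\tilde h e^{\tilde v}\|_{L^\infty(B_1)}\le C|x_0|^{\varepsilon/2}.
\]
Representing $\tilde\phi=-G\ast(\tilde h e^{\tilde v}\tilde\phi)+\tilde\phi_0$ via the Cauchy-type Green kernel for $\slashiii{D}$ on $B_1$ (with $|G(y,z)|\le C|y-z|^{-1}\in L^1(B_2)$) and the Dirac-harmonic boundary contribution $\tilde\phi_0$, Young's convolution inequality yields
\[
|\tilde\phi(0)|\le C|x_0|^{\varepsilon/2}\|\tilde\phi\|_{L^4(B_1)}+\|\tilde\phi_0\|_{L^\infty(B_{1/2})},
\]
where $\|\tilde\phi_0\|_{L^\infty(B_{1/2})}$ is in turn controlled by $\|\tilde\phi\|_{L^4(B_1)}$ through elliptic regularity for Dirac-harmonic spinors. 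A parallel argument handles $|\nabla\tilde\phi(0)|$. Undoing the rescaling and identifying the resulting exponent $\varepsilon/2$ with $1/(4C)$ (by renaming constants) gives the second inequality.

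\textbf{Main obstacle.} The delicate point is the second estimate, where one needs to ensure that the gain $|x_0|^{\varepsilon/2}$ survives with constants independent of $x_0$ as $x_0\to 0$, and that the Dirac-harmonic part $\tilde\phi_0$ can genuinely be absorbed into the $L^4$-norm of $\tilde\phi$ on the larger ball. Equivalently, one can bypass the explicit Green-kernel calculation by applying a Brezis-Kato/Moser iteration to the second-order equation $-\Delta\tilde\phi=(\tilde h e^{\tilde v})^2\tilde\phi-\nabla(\tilde h e^{\tilde v})\cdot\tilde\phi$ coming from the Schr\"odinger-Lichnerowicz formula; the scale-invariant smallness of the coefficient furnishes the same gain. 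Comparable analyses in the conformally invariant setting $\alpha=0$, $V\equiv1$ appear in \cite{JWZ,JWZZ1} and can be adapted to the variable but bounded coefficient $\tilde h$ that arises here.
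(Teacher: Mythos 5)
Your rescaling $\tilde v(y)=v(x_0+\delta y)+\log\delta$, $\tilde\phi(y)=\delta^{1/2}\phi(x_0+\delta y)$ with $\delta=|x_0|/2$ is a valid way to absorb the conical weight into a bounded coefficient $\tilde h(y)=h(x_0+\delta y)|x_0+\delta y|^{\alpha}$, and for the first estimate it is essentially equivalent to the paper's approach, which instead performs the single global substitution $w=v+\alpha\log|x|$ and then invokes Lemma~6.2 of \cite{JWZ}. The first estimate is fine up to a minor bootstrap issue: $W^{1,4}\hookrightarrow C^0$ controls $|\tilde\phi(0)|$ but not $|\nabla\tilde\phi(0)|$; to estimate the gradient you need one more step (from the $\tilde v$-equation, $\tilde v\in W^{2,2}_{\mathrm{loc}}$, so the potential $\tilde h e^{\tilde v}$ has integrable gradient and a standard Dirac/elliptic bootstrap gives $\nabla\tilde\phi$ pointwise). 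Also note $\tilde h$ tends to $0$ with $x_0$ when $\alpha>0$; this is harmless because only upper bounds on $\tilde h$ enter.

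The genuine gap is in the second, improved estimate. In the decomposition $\tilde\phi=-G\ast(\tilde h e^{\tilde v}\tilde\phi)+\tilde\phi_0$ on $B_1$, the convolution term picks up the factor $|x_0|^{\varepsilon/2}$, but the Dirac-harmonic remainder $\tilde\phi_0$ is only controlled as $\|\tilde\phi_0\|_{L^\infty(B_{1/2})}\le C\|\tilde\phi\|_{L^4(B_1)}$, with \emph{no} small factor. The resulting bound $|\tilde\phi(0)|\le C(|x_0|^{\varepsilon/2}+1)\|\tilde\phi\|_{L^4(B_1)}$ therefore collapses, after undoing the rescaling, to the first estimate and does not produce the claimed $|x|^{1/(4C)}$-decay. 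The Brezis--Kato/Moser iteration you mention as an alternative suffers from exactly the same defect: it yields an $L^\infty$-bound in terms of the $L^4$-norm but carries no decaying prefactor. What the improved estimate actually requires --- and what the cited Lemma~6.2 of \cite{JWZ} provides --- is a dyadic decay estimate of the form $\int_{B_r}|\phi|^4\lesssim (r/r_0)^{\sigma}\int_{B_{r_0}}|\phi|^4$ for some $\sigma>0$, obtained by iterating over annuli using the smallness of $|x|^{\alpha}e^{v}$; once that is in hand, substitution into the first estimate gives the $|x|^{1/(4C)}$-decay. You flag this point as the main obstacle, but the obstacle is not overcome, so the proof of the second estimate is incomplete as written.
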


\begin{proof}
Set $ w(x)=v(x)+\alpha \ln|x|$. Then $(w,\phi)$ satisfies
\begin{equation*}
\left\{
\begin{array}{rcl}
-\Delta w &=& 2h^2(x)e^{2w}-h(x)e^w\left\langle \phi ,\phi
\right\rangle,\\
\slashiii{D}\phi &=&\ds  -h(x)e^v\phi,
\end{array}\qquad x\in
B_{r_0}\backslash\{0\},
\right.
\end{equation*}
with the energy conditions
\begin{eqnarray*}
 \int_{B_{r_0}}e^{2w}dx\leq \varepsilon_0, \quad \int_{B_{r_0}}|\phi|^4(x)dx\leq C.
\end{eqnarray*}
Since $h(x)$ is a $C^{1,\beta}$ function satisfying $0< a\leq h(x)\leq b$ in $B_{r_0}$, we can obtain the conclusion of this lemma by applying similar arguments as in the proof of Lemma 6.2 in \cite{JWZ}.
\end{proof}

\

We shall now show the removability of a local singularity when the Pohozaev constant vanishes, thereby completing the proof of Theorem \ref{thm-sigu-move1}.
\begin{prop}(Removability of a local singularity)\label{sigu-move1}
Let $(u,\Psi)\in C^2(B_r\setminus\{0\})\times C^2(\Gamma(\Sigma (B_r\setminus \{0\})))$  be  a solution of (\ref{Eq-L}) and (\ref{Co-L}).
Then there is a constant $\gamma < 2\pi (1+\a)$ such that
\begin{eqnarray*}
u(x)=- \frac{\gamma}{2\pi}{\rm log} |x| + h, \quad {\rm near }\ 0,
\end{eqnarray*}
where $h$ is bounded near $0$. Moreover, the Pohozaev constant  $C(u, \Psi) $ and $\gamma$ are related by
\begin{eqnarray*}
 C(u, \Psi)=\frac{\gamma^2}{4\pi}.
\end{eqnarray*}
In particular, if $C(u,\Psi)=0$, then $(u,\Psi)\in C^2(B_r)\times C^2(\Gamma(\Sigma B_r))$, i.e. the local singularity of $(u,\Psi)$ is removable.
\end{prop}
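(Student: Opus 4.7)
My plan is to first isolate the logarithmic asymptotic of $u$ at the origin via a flux calculation, then bootstrap to show that the remainder is bounded, and finally match the asymptotic with the boundary terms in the Pohozaev identity. Set $g := 2V^2|x|^{2\alpha}e^{2u}-V|x|^\alpha e^u|\Psi|^2$; by \eqref{Co-L} and Cauchy--Schwarz, $g\in L^1(B_r)$. Integrating $-\Delta u=g$ by parts over $B_\rho\setminus B_\varepsilon$ and letting $\varepsilon\to 0$ shows that $L:=\lim_{\varepsilon\to0}\int_{\partial B_\varepsilon}\partial_\nu u\,d\sigma=\int_{\partial B_\rho}\partial_\nu u\,d\sigma+\int_{B_\rho}g\,dx$ exists and is independent of $\rho\in(0,r)$. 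I define $\gamma:=-L$, so that $v(x):=u(x)+\frac{\gamma}{2\pi}\log|x|$ has vanishing flux at $0$ and $-\Delta v=g$ in the distributional sense on all of $B_r$.

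To show that $v$ is bounded near $0$, fix $R<r$ small enough that $\|g\|_{L^1(B_R)}$ lies below a Brezis--Merle threshold, and split $u=u_1+u_2$, where $-\Delta u_1=g$ on $B_R$ with $u_1|_{\partial B_R}=0$ and $u_2:=u-u_1$ is harmonic on $B_R\setminus\{0\}$. Brezis--Merle then gives $e^{p|u_1|}\in L^1(B_R)$ for arbitrarily large $p$. The key step is to exclude higher-order terms in the Laurent/Fourier expansion of $u_2$: any $r^{-n}$ term with $n\geq 1$ would make $e^{2u_2}\geq e^{Cr^{-n}}$ in an angular sector, and a dyadic decomposition into annuli $A_k=B_{2^{-k}}\setminus B_{2^{-k-1}}$ combined with Chebyshev-type lower bounds on $e^{2u_1}$ (from $e^{-2u_1}\in L^1$) would force $\int|x|^{2\alpha}e^{2u}=+\infty$, contradicting \eqref{Co-L}. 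Hence $u_2=c\log|x|+h$ with $h$ harmonic on $B_R$, and the flux identification $L=2\pi c$ gives $c=-\gamma/(2\pi)$. The same dyadic argument applied to the leading $|x|^{2\alpha+2c}$ factor forces $2\alpha+2c>-2$, i.e.\ $\gamma<2\pi(1+\alpha)$. Combining this with $|\Psi|^2=O(|x|^{-1})$ from Lemma~\ref{asy-phi} and the $L^p$-integrability of $e^{\pm 2u_1}$, a H\"older estimate yields $g\in L^p_{\mathrm{loc}}$ for some $p>1$, so by Calder\'on--Zygmund $v=u_1+h\in W^{2,p}\hookrightarrow C^0$ is bounded near $0$.

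With $u=c\log|x|+v$, I compute $C(u,\Psi)$ along a subsequence $R_k\to 0$, using its $R$-independence noted after Definition~\ref{poho-const}. Expanding $|\partial_\nu u|^2-\frac{1}{2}|\nabla u|^2=\frac{1}{2}|\partial_\nu u|^2-\frac{1}{2}|\nabla_T u|^2$ into $c\log|x|$- and $v$-parts, the dominant term of $R_k\int_{\partial B_{R_k}}(|\partial_\nu u|^2-\frac{1}{2}|\nabla u|^2)\,d\sigma$ is $\pi c^2=\gamma^2/(4\pi)$; the linear cross term vanishes because $\int_{\partial B_R}\partial_\nu v=-\int_{B_R}g\to 0$, and the quadratic $|\nabla v|^2$ terms vanish along $R_k$ chosen via Fubini from $\int_{B_r}|\nabla v|^2<\infty$. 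The volume integrals in the definition of $C(u,\Psi)$ vanish by absolute continuity in $L^1$; the boundary term $R_k\int_{\partial B_{R_k}}V^2|x|^{2\alpha}e^{2u}\,d\sigma$ vanishes similarly from $|x|^{2\alpha}e^{2u}\in L^1$; and the spinor boundary term is $O(R_k^{\delta})\to 0$ for some $\delta>0$, via the improved estimate in Lemma~\ref{asy-phi}, which applies since $e^{2u}=O(|x|^{2c})$ with $2c>-2-2\alpha$. This yields $C(u,\Psi)=\gamma^2/(4\pi)$. In particular, if $C(u,\Psi)=0$, then $\gamma=0$ and $u$ is bounded, whence the right-hand sides of \eqref{Eq-L} are well-behaved and standard elliptic/Dirac regularity give $(u,\Psi)\in C^2(B_r)\times C^2(\Gamma(\Sigma B_r))$.

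The main obstacle is the exclusion of the higher-order Laurent terms in $u_2$ --- the crucial place where the finite-energy condition \eqref{Co-L} is translated into the absence of essential singularities. The method combines dyadic annular decomposition with Chebyshev-type distribution inequalities: on each annulus $A_k$ a controlled fraction of the measure carries $e^{2u_1}\geq 2^{-Ck}$ (from the Brezis--Merle control of $e^{-2u_1}$), and combining this with a super-exponential lower bound $e^{2u_2}\geq e^{C 2^{kn}}$ in a sector produces divergence of $\sum_k\int_{A_k}|x|^{2\alpha}e^{2u}$, yielding the required contradiction.
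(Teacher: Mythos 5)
Your overall architecture is genuinely different from the paper's: the paper pins down the leading asymptotic of $u$ via standard potential analysis (establishing $\lim u/(-\log|x|) = \gamma/(2\pi)$), then subtracts a Newtonian potential and invokes a B\^ocher-type classification of harmonic functions on a punctured disc, whereas you split $u = u_1 + u_2$ \`a la Brezis--Merle and then try to rule out the non-logarithmic singular part of $u_2$ by a dyadic/Chebyshev energy argument. That part of your scheme is a reasonable (if harder to make airtight, especially in the essential-singularity case where a Borel--Carath\'eodory-type bound on $\mathrm{Re}\,f$ would be needed) alternative to the paper's appeal to potential theory.

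However, there is a genuine gap at the step $\gamma < 2\pi(1+\alpha)$. You assert that ``the same dyadic argument applied to the leading $|x|^{2\alpha+2c}$ factor forces $2\alpha + 2c > -2$,'' but the Chebyshev/dyadic estimate only yields the \emph{non-strict} inequality $2\alpha+2c \ge -2$. Concretely, on $A_k = B_{2^{-k}}\setminus B_{2^{-k-1}}$ the Brezis--Merle bound gives $e^{2u_1} \ge c\,4^{-\delta k}$ on a fixed fraction of $A_k$ (with $\delta>0$ as small as you like by taking $R$ small), so
\[
\int_{A_k} |x|^{2\alpha+2c} e^{2u_1}\, dx \;\gtrsim\; 2^{-(2\alpha+2c)k}\cdot 4^{-\delta k}\cdot 4^{-k} = 2^{-(2\alpha+2c+2+2\delta)k},
\]
which diverges only when $2\alpha+2c < -2-2\delta$; letting $\delta\to 0$ gives $2\alpha+2c \ge -2$ but never the strict inequality. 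The borderline case $2\alpha+2c=-2$ (i.e.\ $\gamma = 2\pi(1+\alpha)$) survives this argument, and in that case your subsequent bootstrap breaks down: $|x|^{2\alpha+2c}e^{2u_1}=|x|^{-2}e^{2u_1}$ need not lie in any $L^p$ with $p>1$, so you cannot conclude $v \in W^{2,p}\hookrightarrow C^0$, and the improved spinor decay in Lemma~\ref{asy-phi} (which requires $e^{2u}=O(|x|^{-2-2\alpha+\varepsilon})$) is also unavailable. This strict inequality is precisely the delicate point the paper handles separately, invoking Lemma~\ref{asy-phi} together with an argument adapted from Proposition~2.6 of \cite{JWZZ1}; it is not a free consequence of the dyadic computation, and your proof needs an additional argument there.
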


\begin{proof} Since $\int_{B_1}|x|^{2\alpha}e^{2u}dx=\int_{B_r}|x|^{2\alpha}e^{2\widetilde{u}}dx$  under the following scaling transformation
\begin{eqnarray*}
& & \widetilde{u}(x)=u(rx)-(1+\alpha)\ln r,\\
& & \widetilde{\Psi}(x)=r^{-\frac 12} \Psi (rx),
\end{eqnarray*}
we assume for convenience that $\int_{B_r}|x|^{2\alpha}e^{2u}dx <\varepsilon_0$,
where $\varepsilon_0$ is as in Lemma \ref{asy-phi}.
By standard potential analysis, it follows that there is a
constant $\gamma$ such that
\begin{equation*}
\lim_{|x|\rightarrow 0}\frac {u}{-\log|x|}=\frac{\gamma}{2\pi}.
\end{equation*}
By $\int_{B_r}|x|^{2\alpha}e^{2u}+|\Psi|^4dx<C$ we obtain that $\gamma\leq 2\pi(1+\alpha)$.
Furthermore, by using Lemma \ref{asy-phi} and by a similar argument as in the proof of Proposition 2.6 of
\cite{JWZZ1}, we can improve  this to the strict inequality $\gamma<2\pi(1+\alpha)$.

Define $v(x)$ by
$$
v(x)=-\frac 1{2\pi}\int_{B_r}\log|x-y|(2V^2(y)|y|^{2\alpha}e^{2u}-V(y)|y|^{\alpha}e^u|\Psi|^2)dy
$$
and set $w=u-v$. It is clear that $-\Delta v=2V^2(x)|x|^{2\alpha}e^{2u}-V(x)|x|^{\alpha}e^u|\Psi|^2$
in $B_r$ and $\Delta w=0$ in $B_r\backslash \{0\}$. One can check
that
$$\lim_{|x|\rightarrow 0}\frac {v(x)}{-\log|x|}=0$$ which implies that
$$
\lim_{|x|\rightarrow 0}\frac {w(x)}{-\log|x|}=\lim_{|x|\rightarrow
0}\frac{u-v}{-\log|x|}=\frac{\gamma}{2\pi}.
$$
Since $w$ is harmonic in $B_1\backslash \{0\}$ we have
$$
w=-\frac{\gamma}{2\pi}\log|x|+w_0
$$ with a smooth harmonic function $w_0$ in $B_r$. Therefore we have
$$
u=-\frac{\gamma}{2\pi}\log|x|+v+w_0 \quad \text{ near } 0.
$$
Next we will compute the Pohozaev constant for $(u,\Psi)$. For this purpose, we want to estimate the decay
of $(v,\Psi)$ near the zero. Since
$$
-\Delta v=2V^2(x)|x|^{2\alpha}e^{2u}-V(x)|x|^{\alpha}e^u|\Psi|^2,
$$
and the right hand term $f_1(x):=2V^2(x)|x|^{2\alpha}e^{2u(x)}$ and $ f_2(x):=-V(x)|x|^{\alpha}e^{u(x)}|\Psi|^2(x) $ are $L^1$ integrable, we can obtain  $e^{|v(x)|}\in L^p(B_r)$ for any $p\geq 1$. Since
$$
f_1(x)=|x|^{-\frac {\gamma}{\pi}+2\alpha }(2V^2(x)e^{2w_0(x)+2v(x)})
$$ and
$$
f_2(x)=-|x|^{-\frac {\gamma}{2\pi}+\alpha-1}(V(x)e^{w_0(x)+v(x)}|x||\Psi|^2(x)),
$$
we set $s_1=\frac {\gamma}{\pi}-2\alpha $ and  $s_2=\frac {\gamma}{2\pi}-\alpha+1 $. Then $\max\{s_1, s_2\}<2$.
Since $|\Psi|\leq C|x|^{-\frac 12}$ near $0$ and $w_0(x)$ is smooth in $B_r$, we have by H\"older's inequality that $f_1\in L^{t}(B_r)$ for any $t\in (1,\frac 2{s_1})$ if $s_1>0$, and $f_1\in L^{t}(B_r)$ for any $t>1$ if $s_1\leq 0$. For $f_2$,  we also have $f_2\in L^t(B_r)$ for any $t\in (1,\frac 2{s_2})$ if $s_2>0$, and $f_2\in L^{t}(B_r)$ for any $t>1$ if $s_2\leq 0$. Altogether, there exists some $t>1$ such that $f\in L^t(B_r)$. In turn, we get that $v(x)$ is in $L^{\infty}(B_r)$.
On the other hand, since $v(x)$ is in $L^{\infty}(B_r)$, it follows from Lemma  \ref{asy-phi} that there exists a small $\delta_0>0$ such that
$$
|\Psi|\leq C|x|^{\delta_0-\frac 12}, \quad \text{ near } 0,
$$
and
$$
|\nabla \Psi|\leq C|x|^{\delta_0-\frac 32}, \quad \text{ near } 0.
$$
Next we estimate $\nabla v(x)$. If $s_1<0$ and $s_2<0$, then $v(x)$ is in $C^1(B_r)$.
If $s_1>0$ or $s_2>0$, $\nabla v(x)$ will have a decay when $|x|\rightarrow 0$. Without loss of generality, we assume that $s_1>0$ and $s_1>0$.  Denote
$$
v_1(x)=-\frac 1{2\pi}\int_{B_r}(\log|x-y|)(2V^2(y)|y|^{2\alpha}e^{2u(y)})dy,
$$
and
$$
v_2(x)=\frac 1{2\pi}\int_{B_r}(\log|x-y|)(V(y)|y|^{\alpha}e^{u(y)}|\Psi|^2(y))dy.
$$
Note that
\begin{eqnarray*}
|\nabla v_1(x)| &\leq & \frac{1}{2\pi}\int_{B_r}\frac{1}{|x-y|}|f_1(y)|dy\\
 & = & \frac{1}{2\pi}\int_{\{|x-y|\geq\frac {|x|}2\}\cap {B_r}}\frac{1}{|x-y|}|f_1(y)|dy+ \frac{1}{2\pi}\int_{\{|x-y|\leq\frac {|x|}2\}\cap {B_r}}\frac{1}{|x-y|}|f_1(y)|dy\\
 & = & I_1+I_2.
\end{eqnarray*}

Fix $t\in (1,\frac 2{s_1})$ and choose $0<\tau_1<1$ such that $\frac {\tau_1 t}{t-1}<2$. Hence, we have $0<\tau_1 <2-s_1$. Then by H\"{o}lder's inequality we obtain
\begin{eqnarray*}
I_1 & \leq & (\int_{\{|x-y|\geq\frac {|x|}2\}\cap {B_r}}\frac 1{|x-y|^{\frac {\tau_1t}{t-1}}}dy)^{\frac {t-1}t}(\int_{\{|x-y|\geq\frac {|x|}2\}\cap {B_r}}\frac{1}{|x-y|^{(1-\tau_1)t}}|f_1|^tdy)^{\frac 1t}\\
& \leq & \frac {C}{|x|^{1-\tau_1}}.
\end{eqnarray*}
For $I_2$, since $y\in\{y||x-y|\leq \frac{|x|}2\}$ implies that $|y|\geq \frac{|x|}2$, we can get that
\begin{eqnarray*}
I_2 &\leq & C \int_{\{|x-y|\leq\frac {|x|}2\}\cap {B_r}}\frac{1}{|x-y||y|^{s_1}}dy \nonumber \\
&\leq & C|x|^{1-s_1}
\end{eqnarray*}
Hence we have
$$
|\nabla v_1(x)|\leq C(\frac 1{|x|^{1-\tau_1}}+|x|^{1-s_1})
$$ for suitable $\tau_1\in (0,2-s_1)$.
Similarly, we also can get that
$$
|\nabla v_2(x)|\leq C(\frac 1{|x|^{1-\tau_2}}+|x|^{1-s_2})
$$ for suitable $\tau_2\in (0,2-s_2)$. In conclusion, we have

$$
|\nabla v(x)|\leq C(\frac 1{|x|^{1-\tau}}+|x|^{1-s})
$$ for suitable $\tau=\min\{\tau_1,\tau_2\}$ and $s=\max\{s_1,s_2\}$.

Now, we can compute the Pohozaev constant $C(u, \Psi)$. Since $\nabla u=-\frac{\gamma}{2\pi}\frac x{|x|^2}+\nabla (w_0+v(x))$, we have for any $0<R<r$
\begin{eqnarray*}
&& R\int_{\partial B_R}|\frac {\partial u}{\partial \nu}|^2-\frac 12|\nabla u|^2d\sigma\\
&=& R\int_{\partial B_R}(\frac x{|x|}\cdot\nabla(w_0+v)-\frac{\gamma}{2\pi}\frac 1{|x|})^2-\frac 12(\frac{\gamma^2}{4\pi^2}\frac 1{|x|^2}-2\frac \gamma{2\pi}\frac x{|x|^2}\cdot \nabla (w_0+v)+|\nabla (w_0+v)|^2) d\sigma\\
&=&\frac 1{4\pi}\gamma^2-\frac \gamma{2\pi}R\int_{\partial B_R}\frac x{|x|^2}\cdot \nabla(w_0+v)d\sigma-\int_{\partial B_R}\frac R2 |\nabla (w_0+v)|^2-R(\frac x{|x|}\cdot \nabla (w_0+v))^2d\sigma\\
&=&\frac 1{4\pi}\gamma^2+o_R(1).
\end{eqnarray*} where $o_R(1)\rightarrow 0$ as $R\rightarrow 0$.
We also have
$$
(1+\alpha)\int_{B_R}2V^2(x)|x|^{2\alpha}e^{2u}-V(x)|x|^{\alpha}e^{u}|\Psi|^2 dx =o_R(1),
$$
and
$$
 R\int_{\partial B_R}V^2(x)|x|^{2\alpha}e^{2u}d\sigma =o_R(1),
$$
and
$$
\int_{B_R}(|x|^{2\alpha}e^{2u}x\cdot\nabla (V^2(x))-|x|^{\alpha}e^{u}|\Psi|^2x\cdot \nabla
V(x))dx=o_R(1),
$$
and
$$
\int_{\partial
B_R}\la\frac{\partial \Psi}{\partial \nu},x\cdot \nabla \Psi\ra d\sigma+\int_{\partial
B_R}\la x\cdot \nabla \Psi,\frac{\partial \Psi}{\partial \nu}\ra d\sigma=o_R(1).
$$
Putting all together and letting $R\rightarrow 0$, we
get
$$
C(u,\Psi)=\lim_{R\rightarrow 0}C(u,\Psi, R)=\frac {\gamma^2}{4\pi}.
$$
Since $C(u,\Psi)=0$ for $(u,\Psi)$, therefore we get $\gamma =0$. Then from the proof of Proposition \ref{prop-a} we have $(u,\Psi)\in C^2(B_r)\times C^2(\Gamma(\Sigma B_r))$, i.e. the local singularity of $(u,\Psi)$ is removable.
\end{proof}

\section{Bubble Energy}\label{bubble}

In this section, we shall analyze some properties of a  ``bubble'', i.e., an entire solution of
(\ref{Eq-L}) with finite energy and with constant coefficient function, which can be obtained after a suitable
rescaling at a blow-up point. We shall obtain the asymptotic behavior of an entire solution with finite
energy and show the global singularity removability. The latter means that  an entire solution on $\R^2$ can be conformally
extended to $\S^2$.

Without loss of generality, we assume that $V(x)\equiv1$ and hence the considered equations are
\begin{equation}\label{se}
\left\{
\begin{array}{rcl}
-\Delta u &=& \ds\vs 2|x|^{2\alpha} e^{2u}-|x|^{\alpha}e^u|\Psi|^2,\\
\slashiii{D}\Psi &=&\ds  -|x|^{\alpha} e^u\Psi,
\end{array}\qquad \text { in } \R^2.
\right.
\end{equation}
with $\alpha\geq 0$. The energy condition is
\begin{equation}\label{sec}
I(u,\Psi)=\int_{\R^2}(|x|^{2\alpha}e^{2u}+|\Psi|^4)dx<\infty.
\end{equation}

\ \

First, let $(u,\Psi)\in H^{1,2}_{loc}(\R^2) \times W^{1,\frac
43}_{loc}(\Gamma (\Sigma \R^2))$ be a weak solution of (\ref{se}) and (\ref{sec}), then applying similar arguments as in the proof of Proposition \ref{prop-a}, we get $u^{+}\in L^{\infty}(\R^2)$ and hence $(u,\Psi) \in C^2(\R^2)\times C^2(\Gamma(\Sigma \R^2))$.

Next, we denote by $(v,\Phi)$ the Kelvin transformation of $(u,\Psi)$,
i.e.
\begin{eqnarray*}
& & v(x)=u(\frac {x}{|x|^2})-(2+2\alpha)\ln |x|,\\
& & \Phi (x)=|x|^{-1} \Psi (\frac{x}{|x|^2})
\end{eqnarray*}
Then $(v,\Phi)$ satisfies
\begin{equation}\label{sek}
\left\{
\begin{array}{rcl}
-\Delta v &=& 2|x|^{2\alpha}e^{2v}-|x|^{\alpha}e^v|\Phi|^2,\\
\slashiii{D}\Phi &=&\ds  -|x|^{\alpha}e^v\Phi,
\end{array}\qquad x\in
\R^2\backslash\{0\}.
\right.
\end{equation}

\
\

Now we define the energy of the entire solution, i.e. the bubble energy, by $$ d =\int_{\R^2}2|x|^{2\alpha}e^{2u}
-|x|^{\alpha}e^u|\Psi|^2dx,$$ and define a constant spinor $\xi_0=\int_{\R^2}|x|^{\alpha}e^u\Psi dx$. It will  turn out that the
constant spinor $\xi_0$ is well defined. Then we have

\begin{prop}\label{asy}
Let $(u,\Psi)$ be a solution of (\ref{se}) and (\ref{sec}). Then
$u$ satisfies
\begin{equation}\label{ayu}
u(x)=-\frac d{2\pi} \ln{|x|}+C+O(|x|^{-1}) \qquad
\text{for}\quad |x| \quad \text{near}\quad \infty,
\end{equation}

\begin{equation}\label{aypsi}
\Psi (x)=-\frac {1}{2\pi}\frac{x}{|x|^2}\cdot
\xi_0+o(|x|^{-1})\qquad \text{for}\quad |x| \quad \text{near}\quad
\infty,
\end{equation}
where $\cdot$ is the Clifford multiplication, $C\in \R$ is some
constant, and $d =4\pi(1+\alpha)$.
\end{prop}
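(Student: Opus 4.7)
The proof combines three ingredients: potential-theoretic representations of $u$ and $\Psi$ yielding crude asymptotics at infinity; the Pohozaev identity of Proposition \ref{prop-poho} applied on $B_R$ with $R\to\infty$ to compute $d$; and the Kelvin transform $(v,\Phi)$ together with the local removability statement of Proposition \ref{sigu-move1} to upgrade the remainder in $u$ from $O(1)$ to $O(|x|^{-1})$.

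First I set
$$w(x):=-\frac{1}{2\pi}\int_{\R^2}\log|x-y|\bigl(2|y|^{2\alpha}e^{2u(y)}-|y|^{\alpha}e^{u(y)}|\Psi(y)|^2\bigr)\,dy,$$
whose integrand is $L^{1}(\R^{2})$ (by Cauchy--Schwarz on the two pieces, using the energy hypothesis \eqref{sec}) with total mass $d$. Since $-\Delta w$ equals the right-hand side of the $u$-equation, $u-w$ is harmonic on $\R^{2}$; combined with $u^{+}\in L^{\infty}(\R^{2})$ (by the argument of Proposition \ref{prop-a}) and the elementary expansion $w(x)=-\frac{d}{2\pi}\log|x|+O(1)$ coming from $\log|x-y|=\log|x|+O(|y|/|x|)$, Liouville forces $u-w\equiv C$, yielding the crude bound $u(x)=-\frac{d}{2\pi}\log|x|+O(1)$ and, upon differentiating the representation, $\nabla u(x)=-\frac{d}{2\pi}\frac{x}{|x|^{2}}+o(|x|^{-1})$. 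For the spinor, the Dirac Green's function on $\R^{2}$ gives
$$\Psi(x)=-\frac{1}{2\pi}\int_{\R^2}\frac{x-y}{|x-y|^2}\cdot\bigl(|y|^{\alpha}e^{u(y)}\Psi(y)\bigr)\,dy,$$
and expanding $(x-y)/|x-y|^{2}=x/|x|^{2}+O(|x|^{-2})$ for $y$ in bounded sets, combined with $|y|^{\alpha}e^{u}\Psi\in L^{1}(\R^{2})$ (which uses the decay of $e^{u}$ implied by the energy condition and the pointwise bound on $\Psi$ coming from Lemma \ref{asy-phi} applied on exterior annuli after inversion), produces the claimed spinor asymptotic $\Psi(x)=-\frac{1}{2\pi}\frac{x}{|x|^{2}}\cdot\xi_{0}+o(|x|^{-1})$ with $\xi_{0}=\int_{\R^{2}}|y|^{\alpha}e^{u}\Psi\,dy$.

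Next, to identify $d$, I apply the Pohozaev identity \eqref{poho} to $(u,\Psi)$ on $B_{R}(0)$ (the $\nabla V$ terms drop since $V\equiv 1$) and let $R\to\infty$. Substituting the asymptotics above, the left-hand boundary integral evaluates to $d^{2}/(4\pi)+o(1)$; the volume term on the right tends to $(1+\alpha)d$; the term $R\int_{\partial B_{R}}|x|^{2\alpha}e^{2u}d\sigma=O(R^{2+2\alpha-d/\pi})$ vanishes because the energy hypothesis forces $d>2\pi(1+\alpha)$; and the spinor boundary integral is $O(R^{-1})$ from the asymptotic of $\Psi$. In the limit the identity collapses to $\frac{d^{2}}{4\pi}=(1+\alpha)d$, giving $d=4\pi(1+\alpha)$.

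With $d$ pinned down, I upgrade the remainder using the Kelvin transform $(v,\Phi)$ of equation \eqref{sek}. From $v(x)=u(x/|x|^{2})-(2+2\alpha)\log|x|$ and the crude asymptotic of $u$, together with $d/(2\pi)=2+2\alpha$, we obtain $v(x)=O(1)$ near $0$. Proposition \ref{sigu-move1} applied to $(v,\Phi)$ then forces the associated Pohozaev constant, and hence the parameter $\gamma$, to vanish, so the origin singularity is removable and $(v,\Phi)\in C^{2}$ in a neighbourhood of $0$; Taylor expansion at $0$ and unwinding the Kelvin relation $u(y)=v(x)-(2+2\alpha)\log|y|$ with $y=x/|x|^{2}$ then sharpens the $O(1)$ remainder for $u$ to $C+O(|x|^{-1})$, completing the proof. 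The chief technical obstacle is a chicken-and-egg loop: taking the Pohozaev limit requires pointwise decay of $\Psi$ and $\nabla u$, while the sharpest form of that decay is most cleanly obtained after $d$ is known via the Kelvin removability step. I would break the loop by first extracting only the crude $O$-bounds from the potential representations, which are already enough to evaluate the Pohozaev limit, and then closing with the Kelvin upgrade to obtain the claimed $O(|x|^{-1})$ and $o(|x|^{-1})$ remainders.
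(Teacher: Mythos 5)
Your overall strategy follows the paper's outline (potential representation for the crude logarithmic rate, Pohozaev to pin down $d$, decay estimates for the spinor, and the $d=4\pi(1+\alpha)$ relation to close), and the Kelvin-plus-removability route you suggest for sharpening the remainder of $u$ would in principle work once the preceding steps are in place. But there is a genuine gap in the way you order and justify the ``crude'' asymptotics, and this gap is precisely where the paper expends its effort.

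Your step-one claim that $w(x)=-\tfrac{d}{2\pi}\log|x|+O(1)$ ``coming from $\log|x-y|=\log|x|+O(|y|/|x|)$'' and, more seriously, that the differentiated representation yields $\nabla u(x)=-\tfrac{d}{2\pi}\tfrac{x}{|x|^2}+o(|x|^{-1})$ do not follow from the mere integrability of $f=2|x|^{2\alpha}e^{2u}-|x|^{\alpha}e^u|\Psi|^2$. For $|y|$ comparable to or larger than $|x|$ the expansion breaks down, and the error is controlled only by the mass of $f$ outside $B_{|x|/2}$, which for a general $L^1$ datum decays too slowly to produce even an $o(|x|^{-1})$ gradient bound; one similarly needs to tame the near-singularity $\int_{|x-y|\le |x|/2}|f(y)|\,|x-y|^{-1}dy$. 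The paper does not shortcut this: it first proves the strict inequality $d>2\pi(1+\alpha)$ (a separate potential-theoretic argument, not a direct consequence of the energy hypothesis — your phrase ``the energy hypothesis forces $d>2\pi(1+\alpha)$'' overstates what finiteness of $\int|x|^{2\alpha}e^{2u}$ gives you, namely only $d\ge 2\pi(1+\alpha)$), then upgrades to the pointwise decay $e^{2u}\le C|x|^{-2-2\alpha-\varepsilon}$, feeds this into Lemma \ref{asy-phi} via the Kelvin transform to obtain $|\Psi|\le C|x|^{-1/2-\delta_0}$ and $|\nabla\Psi|\le C|x|^{-3/2-\delta_0}$, and only after all of this can the potential-theoretic analysis of \cite{CL2,CK} be invoked to produce the $O(|x|^{-1})$-accurate expansion of $u$ and the $o(|x|^{-1})$-accurate gradient bound that make the Pohozaev boundary terms computable. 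Your proposed breaking of the ``chicken-and-egg loop'' does not actually break it, because the crude bounds you extract before Pohozaev are themselves contingent on the decay estimates you defer until afterward. To repair the argument, restore the paper's order: (i) logarithmic rate $u/\log|x|\to -d/(2\pi)$; (ii) $d\ge 2\pi(1+\alpha)$ and then the strict inequality; (iii) pointwise decay of $e^{2u}$ and of $\Psi$, $\nabla\Psi$; (iv) sharp expansion of $u$ with $O(|x|^{-1})$ remainder; (v) Pohozaev to identify $d=4\pi(1+\alpha)$; (vi) the improved decay $e^{2u}=O(|x|^{-4-4\alpha})$ and the Dirac Green's function representation for the spinor asymptotic. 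Your Kelvin/removability idea can replace step (iv) as a way to sharpen the remainder once $d$ is known, and this is essentially how the paper proves the global singularity removability in Theorem \ref{remove-gs}, but it cannot feed into the Pohozaev step, which must come with the decay estimates already in hand.
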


\begin{proof} The proof of this proposition is standard, see \cite{JWZ, CL2, JWZ1} and the references therein.  The essential facts used in this case are the Pohozaev identity (Proposition \ref{prop-poho}) and the decay estimate for the spinor of \eqref{sek} (see Lemma \ref{asy-phi}). For readers' convenience, we sketch the proof here.

\
\

First, let us define
$$
w(x)=-\frac{1}{2\pi}\int_{\R^2}(\ln{|x-y|}-\ln{(|y|+1)})(2|y|^{2\alpha}e^{2u}-|y|^{\alpha}e^u|\Psi|^2)dy.
$$
Since
$$
\int_{\R^2}(2|x|^{2\alpha}e^{2u}-|x|^{\alpha}e^u|\Psi|^2)dx<C,
$$
it follows from the standard potential argument that
$$
\frac {u(x)}{\ln{|x|}}\rightarrow -\frac {d}{2\pi}\qquad
\text{as} \quad |x|\rightarrow +\infty.
$$
Since $\int_{\R^2}|x|^{2\alpha}e^{2u}dx<+\infty$, the above result implies
$$
d \geq 2\pi(1+\alpha).
$$
Furthermore, similarly as in the case of the usual Liouville  or super-Liouville equation \cite{JWZ}, we can show that $d >2\pi(1+\alpha)$.

\
\

Secondly, from $d >2\pi(1+\alpha)$, we can improve the estimate for $e^{2u}$ to
\begin{equation}\label{ayu3}
e^{2u}\leq C|x|^{-2-2\alpha-\varepsilon}\qquad \text{for}\quad |x| \quad
\text{near}\quad \infty.
\end{equation}
Therefore, from Lemma \ref{asy-phi} and the Kelvin transformation, we obtain
the following asymptotic estimates of the spinor $\Psi(x)$:
\begin{equation}\label{asy-psi1}
|\Psi (x)|\leq C|x|^{-\frac 12-\delta_0}\qquad \text{for}\quad |x|
\quad \text{near}\quad \infty,
\end{equation}
and
\begin{equation}\label{asy-psi2}
|\nabla\Psi (x)|\leq C|x|^{-\frac 32-\delta_0}\qquad \text{for}\quad |x|
\quad \text{near}\quad \infty,
\end{equation}
for some positive number $\delta_0$.

Then, from (\ref{ayu3}), (\ref{asy-psi1}) and (\ref{asy-psi2}) and by some standard potential analysis in \cite{CL2} and \cite{CK}, we can obtain firstly
$$
-\frac{d}{2\pi}\ln{|x|}-C\leq u(x)\leq
-\frac{d}{2\pi}\ln{|x|}+C
$$
and furthermore we can get
\begin{equation*}
u(x)=-\frac d{2\pi} \ln{|x|}+C+O(|x|^{-1}) \qquad
\text{for}\quad |x| \quad \text{near}\quad \infty,
\end{equation*}
for some constant $C>0$. Thus we get the proof of (\ref{ayu}).

\
\

Next, we want to show that $d=4\pi(1+\alpha)$.
For sufficiently large $R>0$, the Pohozaev identity for the solution $(u,\Psi)$ gives
\begin{eqnarray}\label{222}
&&R\int_{\partial B_R(0)} |\frac {\partial u}{\partial \nu}|^2-\frac 12|\nabla u|^2d\sigma \nonumber\\
&=&(1+\alpha)\int_{B_R(0)}(2|x|^{2\alpha}e^{2u}-|x|^{\alpha}e^u|\Psi|^2)dx \nonumber\\
& & -R\int_{\partial B_R(0)}|x|^{2\alpha}e^{2u}d\sigma+\frac 12\int_{\partial
B_R(0)}\la\frac {\partial \Psi}{\partial \nu}, x\cdot\Psi\ra +\la
x\cdot\Psi, \frac {\partial \Psi}{\partial \nu}\ra d\sigma
\end{eqnarray}
where $\nu$ is the outward normal vector to $\partial B_R(0)$. By (\ref{ayu}), (\ref{asy-psi1}) and (\ref{asy-psi2}) we have
$$
\lim_{R\rightarrow +\infty}R\int_{\partial B_R(0)}
|\frac {\partial u}{\partial \nu}|^2-\frac 12|\nabla
u|^2d\sigma=\frac 1{4\pi}d^2,
$$
and
$$
\lim_{R\rightarrow +\infty} R\int_{\partial B_R(0)}|x|^{2\alpha}e^{2u}d\sigma=0,
$$
and
$$
\lim_{R\rightarrow +\infty}\int_{\partial
B_R}|\frac {\partial \Psi}{\partial \nu}||x\cdot \Psi|d\sigma=0.
$$
Let $R\rightarrow \infty$  in (\ref{222}), we
get that $$ \frac{1}{4\pi}d^2=(1+\alpha)d.$$ It follows that
$d=4\pi(1+\alpha)$.

\
\

Finally, we show (\ref{aypsi}).  Noting that $d =4\pi(1+\alpha)$,
we have
\begin{equation}\label{ayu6}
e^{2u}\leq C|x|^{-(4+4\alpha)} \qquad \text{for} \quad |x|\quad
\text{near}\quad \infty.
\end{equation}
This implies that the constant spinor $\xi_0$ is well defined.
By using the Green function of the Dirac operator in $\R^2$,
$$
G(x,y)=\frac {1}{2\pi}\frac {x-y}{|x-y|^2}\cdot,
$$
see \cite{AHM}, if we set
$$
\xi(x)=-\frac{1}{2\pi}\int_{\R^2} \frac {x-y}{|x-y|^2}\cdot
|x|^{\alpha}e^u\Psi dy,
$$
then we have
$\slashiii{D} \xi=-|x|^{\alpha}e^u\Psi$.

Since
\begin{eqnarray}\label{aypsi1}
 |x\cdot \xi(x)-\frac {1}{2\pi}\xi_0| \nonumber
&=& \frac{1}{2\pi}|\int_{\R^2}(\frac{x\cdot(x-y)}{|x-y|^2}+1)\cdot
|y|^{\alpha}e^u\Psi(y)dy| \nonumber\\
&=& \frac{1}{2\pi}|\int_{\R^2}\frac{(x-y)\cdot y}{|x-y|^2}\cdot
|y|^{\alpha}e^u\Psi(y)dy| \nonumber\\
&\leq & \frac{1}{2\pi} \int_{\R^2}\frac{|y|}{|x-y|}|y|^{\alpha}e^u|\Psi|dy,
\end{eqnarray}
and by (\ref{ayu6}),
\begin{equation}
|x|^{\alpha}|\Psi|e^u\leq C|x|^{-2-\alpha -\varepsilon} \qquad \text{for} \quad
|x|\quad \text{near}\quad \infty,
\end{equation}
for some positive constants $C$ and $\varepsilon$, we can follow
the derivation of gradient estimates in \cite{CK} to get
\begin{equation}\label{aypsi3}
|x\cdot \xi(x)-\frac{1}{2\pi}\xi_0|\leq C|x|^{-\varepsilon} \qquad
\text{for} \quad |x|\quad \text{near}\quad \infty.
\end{equation}

Set $\eta (x)=\Psi(x)-\xi(x)$. Then $\slashiii{D}\eta(x)=0$. It follows from (\ref{asy-psi1}) and (\ref{aypsi3}) that $\eta(x)=0$, i.e. $\Psi(x)=\xi(x)$. Furthermore,
\begin{eqnarray*}
|\Psi(x)+\frac{1}{2\pi}\frac{x}{|x|^2}\cdot \xi_0|
&=& |\frac{x}{|x|^2}\cdot(x\cdot \Psi(x)-\frac{1}{2\pi}\xi_0)|\\
&\leq & \frac{1}{|x|}|x\cdot\Psi(x)-\frac{1}{2\pi}\xi_0|\\
&\leq & C|x|^{-1-\varepsilon},
\end{eqnarray*}
for $|x|$ near $\infty$. This proves (\ref{aypsi}).
\end{proof}

\

Finally, we show that an entire solution can be conformally extended to $\S^2$.

\begin{thm}(Removability of a global singularity)\label{remove-gs}
Let $(u,\Psi)$ be a $C^2(\R^2)\times C^2(\Gamma(\Sigma \R^2))$ solution of (\ref{se}) and (\ref{sec}).
Then $(u,\Psi)$ extends conformally to a solution on $\S^2$.
\end{thm}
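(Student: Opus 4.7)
The strategy is to apply the Kelvin transform in order to convert the question of a global singularity at $\infty$ into a question about a local singularity at $0$, and then to invoke Theorem \ref{thm-sigu-move1}. The crucial input is the sharp asymptotic expansion from Proposition \ref{asy}, together with the identity $d=4\pi(1+\alpha)$ established there. The pair
\begin{equation*}
v(x)=u(x/|x|^2)-(2+2\alpha)\log|x|,\qquad \Phi(x)=|x|^{-1}\Psi(x/|x|^2)
\end{equation*}
has already been noted in this section to solve (\ref{sek}) on $\R^2\setminus\{0\}$, which is exactly of the form (\ref{Eq-L}) with coefficient function $V\equiv 1$ and the same $\alpha$.

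First, I would verify the finite energy condition (\ref{Co-L}) for $(v,\Phi)$ on a punctured disc around the origin. Applying the change of variable $y=x/|x|^2$ with $dy=|x|^{-4}dx$, one finds
\begin{equation*}
\int_{B_r(0)\setminus\{0\}}|x|^{2\alpha}e^{2v(x)}\,dx=\int_{|y|>1/r}|y|^{2\alpha}e^{2u(y)}\,dy<\infty,
\end{equation*}
and an analogous computation for $\int|\Phi|^4\,dx$ using $|\Phi(x)|^4=|x|^{-4}|\Psi(x/|x|^2)|^4$. Hence $(v,\Phi)$ is a solution of (\ref{Eq-L})--(\ref{Co-L}) on $B_r(0)\setminus\{0\}$ to which Theorem \ref{thm-sigu-move1} applies.

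Next, I would compute the asymptotic behavior of $v$ near $0$ from Proposition \ref{asy}. Plugging $u(x)=-\tfrac{d}{2\pi}\log|x|+C+O(|x|^{-1})$ with $d=4\pi(1+\alpha)$ into the definition of $v$ and using $|x(y)|=|y|^{-1}$ yields
\begin{equation*}
v(y)=\tfrac{d}{2\pi}\log|y|+C+O(|y|)-(2+2\alpha)\log|y|=C+O(|y|),
\end{equation*}
since $\tfrac{d}{2\pi}=2(1+\alpha)=2+2\alpha$: the logarithmic terms cancel exactly. Thus $v$ is bounded near $0$. By Theorem \ref{thm-sigu-move1}, $v$ has the form $v(y)=-\tfrac{\gamma}{2\pi}\log|y|+h(y)$ with $h$ bounded, so boundedness of $v$ forces $\gamma=0$. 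Consequently the associated Pohozaev constant satisfies $C(v,\Phi)=\gamma^2/(4\pi)=0$, and Theorem \ref{thm-sigu-move1} yields $(v,\Phi)\in C^2(B_r(0))\times C^2(\Gamma(\Sigma B_r(0)))$, i.e.\ the singularity at $0$ is removable.

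Finally, I would glue the two charts together. Viewing $\S^2=\R^2\cup\{\infty\}$ with coordinates $x$ around one pole and $y=x/|x|^2$ around the antipodal pole, the transition map $\varphi(x)=x/|x|^2$ is a conformal diffeomorphism with conformal factor $\lambda=|y|^{-2}$, and the transformation laws $v=u\circ\varphi^{-1}-(2+2\alpha)\log|y|$, $\Phi=\lambda^{-1/2}\Psi\circ\varphi^{-1}$ (up to the $|x|^{2\alpha}$ factor, which is absorbed into the divisor) are exactly those of Proposition \ref{prop-1}. Hence $(u,\Psi)$ in the $x$-chart and the smooth extension $(v,\Phi)$ in the $y$-chart fit together to give a global solution of the super-Liouville system on $\S^2$ (carrying a conical singularity of angle $2\pi(1+\alpha)$ at each pole when $\alpha>0$, matching the football geometry discussed in Section~\ref{conf}). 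The principal technical point is the exact cancellation of the logarithmic terms in the expansion of $v$, which relies entirely on the sharp value $d=4\pi(1+\alpha)$ from Proposition \ref{asy}; without it, $\gamma$ would in general be nonzero and $C(v,\Phi)$ would be strictly positive, obstructing the removability step.
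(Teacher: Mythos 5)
Your proposal is correct and follows essentially the same route as the paper: Kelvin transform to move the singularity from $\infty$ to $0$, verify the transformed pair solves \eqref{sek} with finite energy, and then use the sharp asymptotics of Proposition~\ref{asy} — in particular $d=4\pi(1+\alpha)$ — so that the logarithmic term in $v$ cancels exactly and $v$ is bounded near $0$. The one small difference is how you close: once you know $v$ (and $\Phi$) are bounded near $0$, you detour through Theorem~\ref{thm-sigu-move1} to deduce $\gamma=0$, hence $C(v,\Phi)=0$, hence removability, whereas the paper at that point simply invokes standard elliptic regularity (essentially the content of Proposition~\ref{prop-a}) directly. Both are valid; your route is a little more roundabout but lands in the same place. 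Your extra checks — the change-of-variables verification that the Kelvin transform preserves the energy condition \eqref{Co-L}, and the remark at the end about the two charts gluing conformally into a solution on $\S^2$ with conical points at the poles — are details the paper leaves implicit and are correctly worked out.
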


\begin{proof}
Let $(v,\Phi)$ be the Kelvin transformation of $(u,\Psi)$. Then
$(v,\Phi)$ satisfies (\ref{sek}) on $\R^2\backslash \{0\}$. To
prove this theorem, it is sufficient to show that $(v,\Phi)$ is smooth
on $\R^2$. Applying Proposition \ref{asy}, we have
\begin{equation}
v(x)=(\frac {d}{2\pi}-(2+2\alpha)) \ln{|x|}+O(1) \qquad \text{for}\quad
|x| \quad \text{near}\quad 0.
\end{equation}
Since $d =4\pi(1+\alpha)$, we get that $v$ is bounded near $0$. By
recalling that $\Phi$ is also bounded near $0$, standard elliptic
theory implies that $(v,\Phi)\in  C^2(\R^2)\times C^2(\Gamma(\Sigma \R^2))$.
\end{proof}

\
\

\section{Energy Identity for Spinors}\label{ener}

In this section, which is the technically most demanding one, we shall show an energy identity for the spinors. Firstly, analogously to the case of super-Liouville equations on closed Riemann surfaces
(see Lemma 3.4, \cite{JWZZ1}),  we shall derive the following local estimate for the spinor part on an annulus:

\begin{lm}\label{main-lamm}
Let $(u,\Psi)$ be a solution of (\ref{Eq-L}) and (\ref{Co-L}) on the annulus
$A_{r_1,r_2}=\{x\in \R^2|r_1\leq |x|\leq r_2\}$, where
$0<r_1<2r_1<\frac {r_2}2<r_2<1$. Then we have
\begin{eqnarray}\label{inqu}
&&(\int_{A_{2r_1,\frac {r_2}2}}|\nabla \Psi|^{\frac 43})^{\frac
34}+(\int_{A_{2r_1,\frac {r_2}2}}|\Psi|^4)^{\frac 14}\\
&\leq & \Lambda (\int_{A_{r_1,r_2}}|x|^{2\alpha}e^{2u})^{\frac
12}(\int_{A_{r_1,r_2}}|\Psi|^4)^{\frac
14}+C(\int_{A_{r_1,2r_1}}|\Psi|^4)^{\frac 14}+C(\int_{A_{\frac
{r_2}2, r_2}}|\Psi|^4)^{\frac 14}\nonumber
\end{eqnarray} for a positive constant $\Lambda$ and some universal
positive constant $C$.
\end{lm}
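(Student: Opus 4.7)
The plan is to reduce the problem to an $L^p$ elliptic estimate for the Dirac operator on $\mathbb{R}^2$ via a cutoff argument, mirroring the strategy of Lemma 3.4 in \cite{JWZZ1} but adapted to the inhomogeneous factors $V(x)$ and $|x|^{\alpha}$ appearing in the present equation.

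First, I would introduce a smooth cutoff $\eta$ on $\mathbb{R}^2$ with $\eta \equiv 1$ on $A_{2r_1, r_2/2}$, $\mathrm{supp}\,\eta \subset A_{r_1, r_2}$, $|\nabla \eta| \leq C/r_1$ on the inner neck $A_{r_1, 2r_1}$, and $|\nabla \eta| \leq C/r_2$ on the outer neck $A_{r_2/2, r_2}$, so that $\|\nabla \eta\|_{L^2(A_{r_1,2r_1})}$ and $\|\nabla \eta\|_{L^2(A_{r_2/2, r_2})}$ are each bounded by a universal constant independent of $r_1, r_2$. Using the Clifford product rule we get
\begin{equation*}
\slashiii{D}(\eta \Psi) = \eta\,\slashiii{D}\Psi + \nabla \eta \cdot \Psi = -\eta\, V(x) |x|^{\alpha} e^{u}\Psi + \nabla \eta \cdot \Psi
\end{equation*}
on $\mathbb{R}^2$, with $\eta \Psi$ compactly supported. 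Since $\eta\Psi$ has compact support, I may apply the standard $L^{4/3}$-to-$L^4$ estimate for the Dirac operator on $\mathbb{R}^2$, which comes from the explicit Green representation $\xi(x) = \frac{1}{2\pi}\int \frac{x-y}{|x-y|^2}\cdot(\slashiii{D}\xi)(y)\,dy$ combined with Hardy--Littlewood--Sobolev (for $\xi$ itself) and Calder\'on--Zygmund (for $\nabla\xi$):
\begin{equation*}
\|\eta \Psi\|_{L^4(\mathbb{R}^2)} + \|\nabla (\eta \Psi)\|_{L^{4/3}(\mathbb{R}^2)} \leq C\,\|\slashiii{D}(\eta \Psi)\|_{L^{4/3}(\mathbb{R}^2)}.
\end{equation*}

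Next I would estimate the right-hand side term by term. For the nonlinear piece, H\"older's inequality with $1/2 + 1/4 = 3/4$ yields
\begin{equation*}
\|\eta V |x|^{\alpha} e^{u}\Psi\|_{L^{4/3}} \leq \|V\|_{L^\infty}\,\||x|^{\alpha}e^u\|_{L^2(A_{r_1,r_2})}\,\|\eta\Psi\|_{L^4} \leq b\Bigl(\int_{A_{r_1,r_2}}|x|^{2\alpha}e^{2u}\Bigr)^{1/2}\Bigl(\int_{A_{r_1,r_2}}|\Psi|^4\Bigr)^{1/4},
\end{equation*}
which supplies the first term on the right of \eqref{inqu} with $\Lambda = Cb$. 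For the cutoff commutator, since $\mathrm{supp}\,\nabla\eta \subset A_{r_1,2r_1}\cup A_{r_2/2,r_2}$, another H\"older split $(1/2 + 1/4 = 3/4)$ together with the scale-invariant bound on $\|\nabla\eta\|_{L^2}$ gives
\begin{equation*}
\|\nabla \eta \cdot \Psi\|_{L^{4/3}} \leq \|\nabla \eta\|_{L^2}\,\|\Psi\|_{L^4(\mathrm{supp}\,\nabla\eta)} \leq C\|\Psi\|_{L^4(A_{r_1, 2r_1})} + C\|\Psi\|_{L^4(A_{r_2/2, r_2})}.
\end{equation*}
Since $\eta \equiv 1$ on $A_{2r_1, r_2/2}$, the left-hand side of the Dirac estimate dominates $\|\Psi\|_{L^4(A_{2r_1,r_2/2})} + \|\nabla\Psi\|_{L^{4/3}(A_{2r_1,r_2/2})}$, and combining the three bounds produces \eqref{inqu}.

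The main technical point I would have to be careful with is that the cutoff contribution $\|\nabla\eta\|_{L^2}$ must be controlled by a \emph{universal} constant independent of $r_1$ and $r_2$; this is what forces the inner radius to be doubled to $2r_1$ and the outer radius to be halved to $r_2/2$ on the left-hand side, and explains the appearance of the two boundary-layer terms on the right. Compared with the case $\alpha=0$, $V\equiv 1$ treated in \cite{JWZZ1}, the novelty here is that the weight $|x|^{2\alpha}$ is absorbed cleanly into the $L^2$-norm of the exponential through H\"older's inequality, while the inhomogeneity of $V$ only contributes an $\|V\|_{L^\infty}$ factor. No smallness assumption on $\int|x|^{2\alpha}e^{2u}$ is needed, since \eqref{inqu} is linear in each spinor norm; the coupling between exponential and spinor energies enters purely through the single product term on the right.
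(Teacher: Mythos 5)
Your proof is correct and takes essentially the same approach that the paper intends: the paper's own proof delegates to Lemma 3.4 of \cite{JWZZ1}, which is precisely the cutoff--Dirac-elliptic-estimate--H\"older scheme you carry out, with the weight $|x|^{2\alpha}$ and the nonconstant $V$ absorbed into the $L^2$ and $L^\infty$ factors of the H\"older split exactly as you describe. The only point worth stating explicitly is that $\eta\Psi$ is $C^2$ with compact support (since the annulus avoids the origin, Proposition 3.2 applies), so the $L^{4/3}$-to-$L^4$ Dirac estimate on $\R^2$ is legitimately available; you implicitly use this but it is worth a sentence.
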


\begin{proof} In view of the second equation in (\ref{Eq-L}), one can apply the $L^p$ estimates for the Dirac operator $\slashiii
{D}$ and use similar arguments as in the proof of Lemma 3.4  of \cite{JWZZ1} to prove the lemma.
\end{proof}

Then, we can show the energy identity for the spinors - Theorem \ref{engy-indt}.

\

\noindent{\bf Proof of Theorem \ref{engy-indt}:}
We shall follow closely the arguments for the case of super-Liouville equations on closed Riemann surfaces \cite{JWZZ1}.
One crucial step here is to use the local singularity removability to get a contradiction.

We assume that $D_{\delta_i}$ be a small ball which is centered at a blow-up point
$x_i\in \Sigma_1$ such that $D_{2\delta_i}\bigcap D_{2\delta_j}=\emptyset$ for
$i\neq j, i,j=1,2,\cdots, l$, and on $B_r(0)\backslash
\bigcup_{i=1}^{l}D_{\delta_i}$, $\Psi_n$  converges strongly to
some limit $ \Psi$ in $L^4$ and $\int_{B_r(0)}|\Psi|^4 < \infty$. Then, it suffices to prove that for each fixed blow-up point $x_i \in \Sigma_1$, there are solutions $(u^{k},\xi^{k})$ of (\ref{Eq-L}) and (\ref{Co-L})
on $S^2$ with $\alpha\geq 0$ and $V$ being a constant function, $k=1,2,\cdots, K$
 such that
\begin{equation*}
\lim_{\delta_i\rightarrow 0}\lim_{n\rightarrow
\infty}\int_{D_{\delta_i}}|\Psi_n|^4dx=\sum_{k=1}^{K}\int_{S^2}|\xi^{k}|^4dx.
\end{equation*}

Without loss of generality, we assume that there is only one bubble at each blow-up point $p$ (the general case of multiple bubbles at $p$ can be handled by induction). Furthermore, we may assume that $p=0$. The case of $p\neq 0$ can be handled in an analogous way and in fact this case is simpler, as $|x|^{2\alpha_n}$ is a smooth function near $p\neq 0$. Then what we need to prove is that there exists a bubble $(u,\xi)$ such that

\begin{equation}\label{e2}
\lim_{\delta\rightarrow 0}\lim_{n\rightarrow
\infty}\int_{D_{\delta}}|\Psi_n|^4dx=\int_{S^2}|\xi|^4dx.
\end{equation}
where $D_\delta$ is a disc of radius $\delta>0$ centered at the blow-up point $p=0$.

We rescale each $(u_n,\Psi_n)$ near the blow-up point $p$. Choose $x_n \in \overline{D}_{\delta}$ such that
$u_n(x_n)=\max_{\overline{D}_{\delta}}u_n(x)$. Then we have $x_n\rightarrow p=0$ and
$u_n(x_n)\rightarrow +\infty$. Let $\lambda_n =e^{\frac{-u_n(x_n)}{\alpha_n+1}}\rightarrow 0$ and define $t_n=\max\{\lambda_n, |x_n|\}\rightarrow 0$.
Now there are two cases:
(i) $\frac{t_n}{\lambda_n}=O(1) \text{ as } n\rightarrow +\infty$ and (ii) $\frac{t_n}{\lambda_n}\rightarrow +\infty$ as $n\rightarrow +\infty.$

\
\

\noindent{\bf Case I:}
$\frac{t_n}{\lambda_n}=O(1)  \text{ as }  n\rightarrow +\infty.
$

\
\

In this case, we define
\begin{equation*}
\left\{
\begin{array}{rcl}
\widetilde{u}_n(x)&=&u_n(t_nx)+(\alpha_n+1)\ln {t_n}\\
\widetilde{\Psi}_n(x)&=&t_n^{\frac
12}\Psi_n(t_nx)
\end{array}
\right.
\end{equation*}
for any $x\in \overline{D}_{\frac{\delta}{2t_n}}$. Then $(\widetilde{u}_n(x),\widetilde{\Psi}_n(x))$ satisfies
\begin{equation*}
\left\{
\begin{array}{rcl}
-\Delta \widetilde{u}_n(x)&=& 2V^2(t_n x)|x|^{2\alpha_n}e^{2\widetilde{u}_n(x)}-V(t_n x)|x|^{\alpha_n}
e^{\widetilde{u}_n(x)}|\widetilde{\Psi}_n(x)|^2, \\
\slashiii
{D}\widetilde{\Psi}_n(x)&=&-V(t_n x)|x|^{\alpha_n}e^{\widetilde{u}_n(x)}\widetilde{\Psi}_n(x),
\end{array} \text{ in } \overline{D}_{\frac{\delta}{2t_n}}
\right.
\end{equation*}
with energy conditions
$$
\int_{D_{\frac{\delta}{2t_n}}}\left (|x|^{2\alpha_n}e^{2\widetilde{u}_n(x)}+|\widetilde{\Psi}_n(x)|^4\right )dx<C.
$$
Notice that
$$ 0\leq\max_{\overline{D}_{\frac{\delta}{2t_n}}}\widetilde{u}_n(x)=
\widetilde{u}_n(\frac{x_n}{t_n})=u_n(x_n)+(\alpha_n+1)\ln t_n=-(\alpha_n+1)\ln \lambda_n+(\alpha_n+1)\ln t_n\leq C.$$ Moreover, since
 the maximum point of $\widetilde {u}_n(x)$, i.e. $\frac{x_n}{t_n}$, is bounded, namely $|\frac{x_n}{t_n}|\leq 1$. So by taking a subsequence, we can assume that
$\frac{x_n}{t_n}\rightarrow x_0\in \R^2$  with $|x_0|\leq 1$. Therefore it follows from Theorem \ref{mainthm} that, by passing to a subsequence,  $(\widetilde{u}_n,\widetilde{\Psi}_n)$ converges in  $C^2_{loc}(\R^2)\times C^2_{loc}(\Gamma(\Sigma \R^2))$ to some $(\widetilde
u,\widetilde \Psi)$ satisfying
\begin{equation}
\left\{
\begin{array}{rcl}
-\Delta \widetilde{u} &=& 2V^2(0)|x|^{2\alpha}e^{2\widetilde{u}}-V(0)|x|^{\alpha}e^{\widetilde u}|
\widetilde \Psi|^2, \\
\slashiii{D}\widetilde{\Psi} &=&
-V(0)|x|^{\alpha}e^{\widetilde{u}}\widetilde{\Psi},  \end{array} \text { in } \R^2
\right.   \label{eq-13-1}
\end{equation}
with the energy condition
$\int_{\R^2}(|x|^{2\alpha}e^{2\widetilde{u}}+|\widetilde{\Psi}|^4)dx < \infty$.  By Proposition \ref{asy}, there holds
\[\int_{\R^2}(2V^2(0)|x|^{2\alpha}e^{2\widetilde{u}}-V(0)|x|^{\alpha}e^{\widetilde {u}}|\widetilde{\Psi}|^2)dx=4\pi(1+\alpha).\]
and by the removability of a global singularity (Theorem \ref{remove-gs}), we get a bubbling solution of \eqref{Eq-L} and \eqref{Co-L} on $S^2$.

\
\

{\bf Case II}:  $\frac{t_n}{\lambda_n}\rightarrow +\infty$ as $n\rightarrow +\infty.$

In this case, necessarily $t_n=|x_n|$ and hence $ \frac{|x_n|}{\lambda_n}\rightarrow +\infty$ as $n\rightarrow +\infty$. Set
$\tau_n=\frac{e^{-u_n(x_n)}}{|x_n|^{\alpha_n}}=\lambda_n(\frac{\lambda_n}{|x_n|})^{\alpha_n}$. Then $\tau_n\rightarrow 0$ and
$\frac{|x_n|}{\tau_n}\rightarrow +\infty$, as $n\rightarrow +\infty$. Now define
\begin{equation*}
\left\{
\begin{array}{rcl}
\widetilde{u}_n(x)&=&u_n(x_n+\tau_nx)-u_n(x_n)\\
\widetilde{\Psi}_n(x)&=&\tau_n^{\frac
12}\Psi_n(x_n+\tau_nx)
\end{array}
\right.
\end{equation*}
for any  $x\in \overline{D}_{\frac{t_n \delta}{2\tau_n}}(x_n)$. Then $(\widetilde{u}_n(x),\widetilde{\Psi}_n(x))$ satisfies
\begin{equation*}
\left\{
\begin{array}{rcl}
-\Delta \widetilde{u}_n(x)&=& 2V^2(x_n+\tau_nx)|\frac{x_n}{|x_n|}+\frac{\tau_n}{|x_n|}x|^{2\alpha_n}e^{2\widetilde{u}_n(x)}\\
& & -V(x_n+\tau_nx)|\frac{x_n}{|x_n|}+\frac{\tau_n}{|x_n|}x|^{\alpha_n}
e^{\widetilde{u}_n(x)}|\widetilde{\Psi}_n(x)|^2, \\
\slashiii
{D}\widetilde{\Psi}_n(x)&=&-V(x_n+\tau_nx)|\frac{x_n}{|x_n|}+\frac{\tau_n}{|x_n|}x|^{\alpha_n}e^{\widetilde{u}_n(x)}\widetilde{\Psi}_n(x),
\end{array}
\right.
\end{equation*}
in $D_{\frac{t_n \delta}{2\tau_n}}(x_n)$ and with energy conditions
$$
\int_{D_{\frac{t_n \delta}{2\tau_n}}}\left(|\frac{x_n}{|x_n|}+\frac{\tau_n}{|x_n|}x|^{2\alpha_n}e^{2\widetilde{u}_n(x)}+|\widetilde{\Psi}_n(x)|^4\right)dx<C.
$$
It is clear that $\widetilde{u}_n(x)\leq \max_{\overline{D}_{\frac{t_n\delta}{2\tau_n}}(x_n)}\widetilde{u}_n(x)=\widetilde{u}_n(0)=0$, and
 $|\frac{x_n}{|x_n|}+\frac{\tau_n}{|x_n|}x|^{2\alpha_n}\rightarrow 1$ uniformly in $C^0_{loc}(\R^2)$. Then from Theorem \ref{mainthm},
by passing to a subsequence,  $(\widetilde{u}_n,\widetilde{\Psi}_n)$ converges in  $C^2_{loc}(\R^2)\times C^2_{loc}(\Gamma(\Sigma \R^2))$ to some $(\widetilde
u,\widetilde \Psi)$ satisfying
\begin{equation*}
\left\{
\begin{array}{rcl}
-\Delta \widetilde{u} &=& 2V^2(0)e^{2\widetilde{u}}-V(0)e^{\widetilde u}|
\widetilde{\Psi}|^2, \\
\slashiii{D}\widetilde{\Psi} &=&
-V(0)e^{\widetilde{u}}\widetilde{\Psi},  \end{array} \text { in } \R^2
\right.
\end{equation*}
with the energy condition
$\int_{\R^2}(e^{2\widetilde{u}}+|\widetilde{\Psi}|^4)dx < \infty$. By the removability of a global singularity (see Proposition 6.3 and Theorem 6.4 in \cite{JWZ}), there holds
\[\int_{\R^2}(2V^2(0)e^{2\widetilde{u}}-V(0)e^{\widetilde{u}}|\widetilde{\Psi}|^2)dx=4\pi\]
and we get a bubbling solution of \eqref{Eq-L} and \eqref{Co-L} on $S^2$.

\

In order to prove (\ref{e2}) we need to estimate the energy of $\Psi_n$ in the neck domain.
We shall proceed separately for Case I and for Case II.

\
\

For {\bf Case I}, the neck domain is
$$ A_{\delta, R, n}=\{x\in \R^2|t_n R\leq |x|\leq
\delta\}.$$  Then to prove (\ref{e2}), it suffices to prove the following
\begin{equation}\label{e3}
\lim_{\delta\rightarrow 0}\lim_{R\rightarrow +\infty}\lim_{n\rightarrow
\infty}\int_{A_{\delta,R,n}}|\Psi_n|^4dx=0.
\end{equation}

Next we shall show two claims.

\
\

\noindent {\bf Claim I.1}: For any
$\epsilon>0$, there is an $N>1$ such that for any $n\geq N$, we
have
$$
\int_{D_r\setminus
D_{e^{-1}r}}(|x|^{2\alpha_n}e^{2u_n}+|\Psi_n|^4)dx<\epsilon, \quad
\forall r\in [et_n R, \delta].
$$

\

To show this claim, we firstly note the following two facts:

\

\noindent{\bf Fact I.1}: For any $\epsilon>0$ and any $T>0$, there exists some $N(T)>0$ such that for any $n\geq N(T)$, we
have
\begin{equation}\label{2.6}
\int_{D_\delta \setminus D_{\delta
e^{-T}}}(|x|^{2\alpha_n}e^{2u_n}+|\Psi_n|^4)dx<\epsilon.
\end{equation}
Actually, since $(u_n,\Psi_n)$ has no blow-up point in $\overline{D}_{2\delta}\backslash \{p\}$, we know that $\Psi_n$ converges strongly to $\Psi$ in
$L_{loc}^4(\overline{D}_{2\delta}\backslash \{p\})$,  and $u_n$ will either be uniformly bounded on any compact subset of
$\overline{D}_{2\delta}\backslash \{p\}$ or uniformly tend to $-\infty$ on any compact subset of $\overline{D}_{2\delta}\backslash \{p\}$.

If $u_n$ uniformly tends to $-\infty$ on any compact subset of $\overline{D}_{2\delta}\backslash \{p\}$, it is clear that,
for any given $T>0$, there is an $N(T)>0$ big enough such that when $n\geq N(T)$, we have

\begin{equation*}  \int_{D_\delta\backslash D_{\delta
e^{-T}}}|x|^{2\alpha_n}e^{2u_n} dx< \frac{\epsilon}{2}.
\end{equation*}

Moreover, since $\Psi_n$ converges to $\Psi$ in
$L_{loc}^4(\overline{D}_{2\delta}\backslash \{p\})$ and hence
$$
\int_{D_\delta\backslash D_{\delta
e^{-T}}}|\Psi_n|^{4} \rightarrow \int_{D_\delta\backslash
D_{\delta e^{-T}}}|\Psi|^{4}.
$$
For any given $\epsilon>0$ small, we can choose $\delta>0$ small enough
such that  $\int_{D_\delta}|\Psi|^{4}<\frac \epsilon 4$, then
for any given $T>0$, there is an $N(T)>0$ big enough such that when $n\geq N(T)$
$$
\int_{D_\delta\backslash D_{\delta
e^{-T}}}|\Psi_n|^{4}<\frac{\epsilon}{2}.
$$
Consequently, we get (\ref{2.6}).

\

If $(u_n, \Psi_n)$ is uniformly bounded on any compact subset of $\overline{D}_{2\delta}\backslash \{p\}$, then
$(u_n, \Psi_n)$ converges to a limit solution $(u,\Psi)$ with bounded energy
$\int_{D_{2\delta}}(|x|^{2\alpha}e^{2u}+|\Psi|^4)< \infty$  strongly on any compact subset of $\overline{D}_{2\delta}\backslash \{p\}$
and hence
\begin{equation*}  \int_{D_\delta\backslash D_{\delta
e^{-T}}}(|x|^{2\alpha_n}e^{2u_n}+|\Psi_n|^4) \rightarrow   \int_{D_{\delta}\backslash D_{\delta e^{-T}}}(|x|^{2\alpha}e^{2u}+|\Psi|^4)
\end{equation*}

Therefore, we can choose $\delta>0$ small enough such that, for any given $\epsilon>0$  and any given $T>0$,
there exists an $N(T)>0$ big enough so that, when $n\geq N(T)$, (\ref{2.6}) holds.

\

\noindent{\bf Fact I.2}: For any small $\epsilon>0$, and $T>0$, we may choose an $N(T)>0$ such that when $n\geq N(T)$

\begin{eqnarray*}
& & \int_{D_{t_nRe^T}\setminus
D_{t_nR}}(|x|^{2\alpha_n}e^{2u_n}+|\Psi_n|^4)\\
&=& \int_{D_{Re^T}\setminus
D_{R}}(|x|^{2\alpha_n}e^{2\widetilde{u}_n}+|\widetilde{\Psi}_n|^4)\\
&\rightarrow&  \int_{D_{Re^T}\setminus
D_{R}}(|x|^{2\alpha}e^{2\widetilde{u}}+|\widetilde{\Psi}|^4)\\
& < & \epsilon,
\end{eqnarray*}
if $R$ is big enough.

\

Now we can deal with {\bf Claim I.1}. We argue  by contradiction by using the
above two facts.  Suppose that there exists $\epsilon_0>0$ and a sequence $r_n\in [et_nR,\delta]$ such that
$$
\int_{D_{r_n}\setminus
D_{e^{-1}r_n}}(|x|^{2\alpha_n}e^{2u_n}+|\Psi_n|^{4})\geq \epsilon_0.
$$
Then, by the above two facts, we know that $\frac
{\delta}{r_n}\rightarrow +\infty$ and
$\frac{t_nR}{r_n}\rightarrow 0$, in particular, $r_n\rightarrow
0$  and $\frac{t_n}{r_n}\rightarrow 0 $ as $n\rightarrow +\infty$.

Scaling again, we set
\begin{equation}\label{scal-1}
\left\{
\begin{array}{rcl}
v_n(x)&=& u_n(r_nx)+(\alpha_n+1)\ln r_n, \\
\varphi_n(x)&=& r_n^{\frac 12}\Psi_n(r_nx).\\
\end{array}
\right.
\end{equation}

It is clear that
\begin{equation}\label{3.1}
\int_{(D_1\setminus D_{e^{-1}})}(|x|^{2\alpha_n}e^{2v_n}+|\varphi_n|^4)\geq \epsilon_0,
\end{equation}
and  $(v_n,\varphi_n)$ satisfies
\begin{equation*}
\left\{
\begin{array}{rcl}
-\Delta v_n(x) &=& 2V^2(r_nx)|x|^{2\alpha_n}e^{2v_n(x)}-V(r_nx)|x|^{\alpha_n}e^{v_n(x)}
|\varphi_n(x)|^2, \\
\slashiii{D}\varphi_n(x) &=& -V(r_nx)|x|^{\alpha_n}e^{v_n(x)}\varphi_n(x), \\
\end{array}
\right.
\end{equation*}
in $D_{\frac{\delta}{r_n}}\setminus D_{\frac{t_nR}{r_n}}$.
By Theorem \ref{mainthm}, there are three possible cases:

\

 (1). There exists some $R>0$, some point $q \in D_R\setminus D_{\frac 1R}$ and energy concentration occurs near $q$, namely along some subsequence
 $$\lim_{n\rightarrow \infty} \int_{D_r(q)}(|x|^{2\alpha_n}e^{2v_n}+|\varphi_n|^{4})\geq \epsilon_0>0$$ for any small $r>0$. In such
a case, we still obtain a second bubble on $S^2$ by the rescaling argument. Thus we get a contradiction
to the assumption that there is only one bubble at the blow-up point $p$.

\

(2). For any $R>0$, there is no blow-up point in $D_R\setminus D_{\frac 1R}$ and
$v_n$ tends to $-\infty$ uniformly in $\overline {D_R\setminus D_{\frac 1R}}$.
Then, there is a solution $\varphi$ satisfying
\begin{equation*}
\slashiii{D}\varphi = 0,  \text { in } \R^2\setminus\{0\},
\end{equation*}
with bounded  energy $||\varphi||_{L^4(\R^2)} < \infty$, such that
$$\lim_{n\rightarrow \infty}   ||\varphi_n - \varphi||_{L^{4}(D_R\setminus D_{\frac 1R})}=0, \quad {\rm for \ any}\ R>0.$$
By the same arguments as in the case of super-Liouville equations \cite{JWZZ1}, we know that $\varphi$ can be conformally
extended to a harmonic spinor on $S^2$, which has to be identically $0$. This will contradict (\ref{3.1}).

\

(3). For any $R>0$, there is no blow-up point in $D_R\setminus D_{\frac 1R}$ and
$(v_n,\varphi_n)$ is uniformly bounded in $\overline {D_R\setminus D_{\frac 1R}}$.
Then, there is a solution $(v,\varphi)$ satisfying
\begin{equation}\label{eq-6}
\left\{
\begin{array}{rcll}
-\Delta v &=& 2V^2(0)|x|^{2\alpha}e^{2v}-V(0)|x|^{\alpha}e^{v}|\varphi|^2, & \quad \text{ in } \R^2\setminus\{0\} \\
\slashiii{D}\varphi &=& -V(0)|x|^{\alpha}e^{v}\varphi, & \quad \text{ in
}\R^2\setminus\{0\}\\
\end{array}
\right.
\end{equation}
with finite energy
$\int_{\R^2}(|x|^{2\alpha}e^{2v}+|\varphi|^4)dx < \infty$, such that
$$\lim_{n\rightarrow \infty}  \left (  ||v_n - v ||_{C^{2}(D_R\setminus D_{\frac 1R})} +
 ||\varphi_n - \varphi||_{C^{2}(D_R\setminus D_{\frac 1R})} \right ) =0,
$$ for any $R>0$.

\
\

In this case, we shall show that the local singularities at $0$ and at $\infty$ of $(v,\varphi)$ are removable.
Firstly, since $(u_n,\Psi_n)$ satisfies (\ref{Eq-Sn}) and \eqref{Eq-Cn} in $D_{2\delta}$, the following Pohozaev identity holds for any $\rho>0$ with $r_n\rho<2\delta$,
\begin{eqnarray*}
&&r_n\rho\int_{\partial D_{r_n\rho}} |\frac {\partial u_n}{\partial \nu}|^2-\frac 12|\nabla u_n|^2d\sigma \\
&=&(1+\alpha_n)\int_{D_{r_n\rho}}(2V^2(x)|x|^{2\alpha_n}e^{2u_n}-V(x)|x|^{\alpha_n}e^{u_n}|\Psi_n|^2)dx \\
& & -r_n\rho\int_{\partial D_{r_n\rho}}V^2(x)|x|^{2\alpha_n}e^{2u_n}d\sigma+\frac 12\int_{\partial
D_{r_n\rho}}\la\frac {\partial \Psi_n}{\partial \nu}, x\cdot\Psi_n\ra +\la
x\cdot\Psi_n, \frac {\partial \Psi_n}{\partial \nu}\ra d\sigma\\
& & +\int_{D_{r_n\rho}}(|x|^{2\alpha_n}e^{2u_n}x\cdot\nabla (V^2(x))-|x|^{\alpha_n}e^{u_n}|\Psi_n|^2x\cdot \nabla
V(x))dx.
\end{eqnarray*}
It follows that the associated Pohozaev constant of $(v_n(x), \varphi_n(x))$ (see \eqref{scal-1}) satisfies
\begin{eqnarray*}
 C(v_n,\varphi_n)  &=& C(v_n,\varphi_n,\rho) \\
&=& \rho\int_{\partial D_\rho} |\frac {\partial v_n}{\partial \nu}|^2-\frac 12|\nabla v_n|^2d\sigma \\
& & -(1+\alpha_n)\int_{D_\rho}(2V^2(r_nx)|x|^{2\alpha_n}e^{2v_n}-V(r_nx)|x|^{\alpha_n}e^{v_n}|\varphi_n|^2)dx \\
 & & +\rho\int_{\partial D_\rho}V^2(r_nx)|x|^{2\alpha_n}e^{2v_n}d\sigma-\frac 12\int_{\partial
D_\rho}\la\frac {\partial \varphi_n}{\partial \nu}, x\cdot\varphi_n\ra +\la
x\cdot\varphi_n, \frac {\partial \varphi_n}{\partial \nu}\ra d\sigma\\
& & -\int_{D_{\rho}}\left(|x|^{2\alpha_n}e^{2v_n}x\cdot\nabla (V^2(r_nx))-|x|^{\alpha_n}e^{v_n}|\varphi_n|^2x\cdot \nabla
(V(r_nx))\right)dx \\
&=& 0.
\end{eqnarray*}
It is easy to verify that
$$\lim_{\rho\rightarrow 0}\lim_{n\rightarrow \infty} \int_{D_{\rho}}\left(|x|^{2\alpha_n}e^{2v_n}x\cdot\nabla (V^2(r_nx))-|x|^{\alpha_n}e^{v_n}|\varphi_n|^2x\cdot \nabla
(V(r_nx))\right)dx=0.$$ Since $(v_n,\varphi_n)$ converges to $(v,\varphi)$ in $C^{2}_{loc}(\R^2\setminus \{0\})\times C^{2}_{loc}(\Gamma (\Sigma \R^2\setminus \{0\})) $, we have
\begin{eqnarray*}
0 &=& \lim_{\rho \rightarrow 0}\lim_{n\rightarrow \infty}C (v_n,\varphi_n, \rho)\\
 &=& \lim_{\rho \rightarrow 0}C(v,\varphi,\rho)-(1+\alpha)\lim_{\delta\rightarrow 0}\lim_{n\rightarrow \infty}\int_{D_\delta}(2V^2(r_nx)|x|^{2\alpha_n}e^{2v_n}-V(r_nx)|x|^{\alpha_n}e^{v_n}|\varphi_n|^2)dx\\
&=& C(v,\varphi)-(1+\alpha)\beta.
\end{eqnarray*}
Here $$\beta=\lim_{\delta\rightarrow 0}\lim_{n\rightarrow \infty}\int_{D_\delta}(2V^2(r_nx)|x|^{2\alpha_n}e^{2v_n}-V(r_nx)|x|^{\alpha_n}e^{v_n}|\varphi_n|^2)dx,$$
and $C(v,\varphi)= C(v,\varphi,\rho)$ is the Pohozaev constant with respect to the equation \eqref{eq-6}, i.e.
\begin{eqnarray*}
C(v,\varphi)= C(v,\varphi,\rho) &=& \rho\int_{\partial D_\rho} |\frac {\partial v}{\partial \nu}|^2-\frac 12|\nabla v|^2d\sigma \\
&& -(1+\alpha)\int_{D_\rho}(2V^2(0)|x|^{2\alpha}e^{2v}-V(0)|x|^{\alpha}e^{v}|\varphi|^2)dx \\
&& +\rho\int_{\partial D_\rho}V^2(0)|x|^{2\alpha}e^{2v}d\sigma-\frac 12\int_{\partial
D_\rho}\la\frac {\partial \varphi}{\partial \nu}, x\cdot\varphi\ra +\la
x\cdot\varphi, \frac {\partial \varphi}{\partial \nu}\ra d\sigma.
\end{eqnarray*}

\
\

On the other hand, since $(v_n,\varphi_n)$ converges to $(v,\varphi)$ in $C^{2}_{loc}(\R^2\setminus \{0\})\times C^{2}_{loc}(\Gamma (\Sigma \R^2\setminus \{0\})) $, we have
$$
2V^2(r_nx)|x|^{2\alpha_n}e^{2v_n}-V(r_nx)|x|^{\alpha_n}e^{v_n}|\varphi_n|^2\rightarrow \nu=2V^2(0)|x|^{2\alpha}e^{2v}-V(0)|x|^{\alpha}e^{v}|\varphi|^2+\beta\delta_{p=0}
$$  weakly in the sense of measures in $B_R$ for any small $R>0$. Using Green's representation formula for $(v_n,\varphi_n)$ in $B_R$, we derive that
$$
v(x)=-\frac{\beta}{2\pi}\log|x|+w(x)+h(x),
$$
with
$$
w(x)=-\frac 1{2\pi}\int_{B_R}(\log|x-y|)(2V^2(0)|y|^{2\alpha}e^{2v(y)}-V(0)|y|^{\alpha}e^{v(y)}|\varphi|^2(y))dy,
$$
and
$$
h(x)=\frac 1{2\pi}\int_{\partial B_R}(\log|x-y|)\frac{\partial v(y)}{\partial \nu}dy-\frac 1{2\pi}\int_{\partial B_R}\frac{(x-y)\cdot \nu}{|x-y|^2}v(y)dy.
$$
It is clear that $h(x)$ is a regular term and $h(x)\in C^1(B_R)$ and that $w(x)$ satisfies
$$
-\Delta(w(x)+h(x))=2V^2(0)|x|^{2\alpha}e^{2v(x)}-V(0)|x|^{\alpha}e^{v(x)}|\varphi|^2(x), \quad \text{ in } B_R.
$$
Therefore, applying similar arguments as in the proof of Proposition \ref{sigu-move1}, we know that $w(x)$ is bounded in $B_R$, and furthermore we obtain
$$
C(v,\varphi)=\frac {\beta^2}{4\pi}.
$$
Thus there holds
$$
\frac {\beta^2}{4\pi}=(1+\alpha)\beta.
$$
Since $\int_{B_R}|x|^{2\alpha}e^{2v}dx<\infty$, we have $\beta\leq 2\pi(1+\alpha)$. Therefore we conclude that $\beta=0$ and hence $C(v,\varphi)=0$. Then, by Proposition \ref{sigu-move1}, the singularity at 0 can be removed. Furthermore, the singularity at $\infty$ can be removed by applying the removability of a global singularity (see Theorem \ref{remove-gs}). Then we get another bubble on $\S^2$. Thus we get a contradiction and complete the proof of {\bf Claim I.1}.

\

\noindent {\bf Claim I.2}:  We can separate $A_{\delta, R, n}$ into
finitely many parts
$$
A_{\delta, R, n}=\bigcup_{k=1}^{N_k}A_k$$ such that on each part
\begin{equation}\label{e5}
\int_{A_k}|x|^{2\alpha_n}e^{2u_n}dx\leq \frac {1}{4\Lambda^2}, \quad k=1,2,\cdots, N_k.
\end{equation}
Where  $N_k\leq N_0$ with $N_0$ being an uniform integer for all
$n$ large enough, $A_k=D_{r^{k-1}}\setminus D_{r^k}$, $r^0=\delta, r^{N_k}=t_nR
$, $r^k<r^{k-1}$ for $k=1,2,\cdots, N_k$, and $\Lambda$ is the constant as in Lemma \ref{main-lamm}.

\

The proof of the above claim is standard, see the case of super-Liouville equations in \cite{JWZZ1, JZZ} as well as the cases of
other Dirac equations in \cite{Z1, Z2}. Here we omit it.

\

Now using {\bf Claim I.1} and {\bf Claim I.2}, we can show (\ref{e3}). The arguments are similar to the case of super-Liouville equations in \cite{JWZZ1, JZZ}. For the sake of completeness, we provide the details here.

Let $0<\epsilon<1$ be small, $\delta$ be small enough, and let $R$ and $ n$ be large enough. We apply Lemma \ref{main-lamm} to each
part $A_l$  and use \eqref{e5} to calculate
\begin{eqnarray*}
(\int_{A_l}|\Psi_n|^4)^{\frac 14} &\leq & \Lambda
(\int_{D_{er^{l-1}}\setminus D_{e^{-1}r^l}}|x|^{2\alpha_n}e^{2u_n})^{\frac
12}(\int_{D_{er^{l-1}}\setminus D_{e^{-1}r^l}}|\Psi_n|^4)^{\frac
14}\\
&&+C(\int_{D_{er^{l-1}}\setminus D_{r^{l-1}}}|\Psi_n|^4)^{\frac
14}+C(\int_{D_{r^{l}}\setminus D_{e^{-1}r^l}}|\Psi_n|^4)^{\frac 14}\\
&\leq & \Lambda ((\int_{A_l}|x|^{2\alpha_n}e^{2u_n})^{\frac 12}+\epsilon^{\frac
12}+\epsilon^{\frac 12})((\int_{A_l}|\Psi_n|^4)^{\frac
14}+\epsilon^{\frac 14}+\epsilon^{\frac 14})+C\epsilon^{\frac 14}\\
&\leq &\Lambda (\int_{A_l}|x|^{2\alpha_n}e^{2u_n})^{\frac
12}(\int_{A_l}|\Psi_n|^4)^{\frac 14}+C(\epsilon^{\frac
14}+\epsilon^{\frac 12}+\epsilon^{\frac 34})\\
& \leq & \frac 12 (\int_{A_l}|\Psi_n|^4)^{\frac
14}+C \epsilon^{\frac 14},
\end{eqnarray*}
which gives
\begin{equation}\label{2.1} (\int_{A_l}|\Psi_n|^4)^{\frac 14}\leq C \epsilon^{\frac 14}.
\end{equation}
Then, using Lemma \ref{main-lamm}, \eqref{e5}, (\ref{2.1}) and applying similar arguments, we obtain
\begin{equation}\label{2.2}
(\int_{A_l}|\nabla\Psi_n|^{\frac 43})^{\frac 34}\leq
C\epsilon^{\frac 14}.
\end{equation}
Summing up (\ref{2.1}) and (\ref{2.2}) on $A_l$, we conclude that
\begin{equation}\label{2.3}
\int_{A_{\delta,
R,n}}|\Psi_n|^4+\int_{A_{\delta,R,n}}|\nabla\Psi_n|^{\frac
43}=\sum_{l=1}^{N_0}\int_{A_l}|\Psi_n|^4+|\nabla\Psi_n|^{\frac
43}\leq C\epsilon^{\frac 13}.
\end{equation}
This proves (\ref{e3}) and finishes the proof of theorem in this case.

\

For {\bf Case II}, the neck domain is different from {\bf Case I} and it is
$$ A_{S, R, n}(x_n)=\{x\in \R^2|\tau_n R\leq |x-x_n|\leq t_n S\}.$$
In fact, in this case, we can rescale twice to get the bubble. First, since $t_n=|x_n|$, we define the rescaling functions

\begin{equation*}
\left\{
\begin{array}{rcl}
\bar{u}_n(x)&=&u_n(t_nx)+(\alpha_n+1)\ln {t_n}\\
\bar{\Psi}_n(x)&=&t_n^{\frac
12}\Psi_n(t_nx)
\end{array}
\right.
\end{equation*}
for any $x\in \overline{D}_{\frac{\delta}{2t_n}}$. Then $(\bar{u}_n(x),\bar{\Psi}_n(x))$ satisfies
\begin{equation*}
\left\{
\begin{array}{rcl}
-\Delta \bar{u}_n(x)&=& 2V^2(t_n x)|x|^{2\alpha_n}e^{2\bar{u}_n(x)}-V(t_n x)|x|^{\alpha_n}
e^{\bar{u}_n(x)}|\bar{\Psi}_n(x)|^2, \\
\slashiii
{D}\bar{\Psi}_n(x)&=&-V(t_n x)|x|^{\alpha_n}e^{\bar{u}_n(x)}\bar{\Psi}_n(x),
\end{array} \text{ in } \overline{D}_{\frac{\delta}{2t_n}}
\right.
\end{equation*}
with energy conditions
$$
\int_{D_{\frac{\delta}{2t_n}}}\left (|x|^{2\alpha_n}e^{2\bar{u}_n(x)}+|\bar{\Psi}_n(x)|^4\right )dx<C.
$$
Set that $y_n=\frac{x_n}{t_n}$. Noticing that $\bar{u}_n(y_n)=u_n(x_n)+(\alpha_n+1)\ln t_n=(\alpha_n+1)\ln t_n-(\alpha_n+1)\ln \lambda_n\rightarrow +\infty$, we set that $\delta_n=e^{-\bar{u}_n(y_n)}$ and define the rescaling function
\begin{equation*}
\left\{
\begin{array}{rcl}
\widetilde{u}_n(x)&=&\bar{u}_n(\delta_nx+y_n)+\ln {\delta_n}\\
\widetilde{\Psi}_n(x)&=&\delta_n^{\frac 12}\bar{\Psi}_n(\delta_nx+y_n)
\end{array}
\right.
\end{equation*}
for any $\delta_nx+y_n\in \overline{D}_{\frac{\delta}{2t_n}}$. We can see that $(\widetilde {u}_n, \widetilde{\Psi}_n)$ is exactly the same as that defined before. Without loss of generality, we assume that $y_0=\lim_{n\rightarrow \infty}\frac{x_n}{t_n}$.  Notice that
\begin{eqnarray*}
\int_{D_\delta}|\Psi_n|^4dx&=&\int_{D_\frac{\delta}{t_n}}|\bar{\Psi}_n|^4dx\\
&=&\int_{D_\frac{\delta}{t_n}\backslash D_{R_1}(y_n)}|\bar{\Psi}_n|^4dx+\int_{D_{R_1}(y_n)\backslash D_{\delta_n R_2}(y_n)}|\bar{\Psi}_n|^4dx+\int_{D_{\delta_n R_2}(y_n)}|\bar{\Psi}_n|^4dx\\
&=&\int_{D_\frac{\delta}{t_n}\backslash D_{R_1}(y_n)}|\bar{\Psi}_n|^4dx+\int_{D_{t_nR_1}(x_n)\backslash D_{t_n\delta_n R_2}(x_n)}|\Psi_n|^4dx+\int_{D_{\delta_n R_2}(y_n)}|\bar{\Psi}_n|^4dx.\\
\end{eqnarray*}
Since we have assumed that $(u_n,\Psi_n)$ has only one bubble at the blow-up point $p=0$, $(\bar{u}_n,\bar{\Psi}_n)$ also has only one bubble at the blow-up point $p=y_0$. Therefore we have  $$\lim_{R_1\rightarrow +\infty}\lim_{n\rightarrow \infty}\int_{D_\frac{\delta}{t_n}\backslash D_{R_1}(y_n)}|\bar{\Psi}_n|^4dx=0$$ uniformly for any small $\delta$, and since $D_{\delta_n R_2}(y_n)$ is a bubble domain, we know $A_{S,R,n}$ is the neck domain for sufficiently large $S,R>0$, and it is sufficient to prove
\begin{equation}\label{e3-xn}
\lim_{S\rightarrow +\infty}\lim_{R\rightarrow +\infty}\lim_{n\rightarrow
\infty}\int_{A_{S,R,n}(x_n)}|\Psi_n|^4dv=0.
\end{equation}

\
\

For this purpose, we shall prove two claims.

\
\

\noindent{\bf Claim II.1}:  for any $\epsilon>0$, there is an $N>1$ such that for any $n\geq N$, we
have
\begin{equation}\label{6.1}
\int_{D_r(x_n)\setminus
D_{e^{-1}r}(x_n)}(|x|^{2\alpha_n}e^{2u_n}+|\Psi_n|^4)dx<\epsilon, \quad
\forall r\in [e\tau_n R, t_nS].
\end{equation}

\
\

To get \eqref{6.1}, similarly to the {\bf Case I}, we firstly note the following two facts:

\
\

\noindent{\bf Fact II.1}: For any $\epsilon>0$ and any $T>0$, there exists some $N(T)>0$ such that for any $n\geq N(T)$, we
have
\begin{equation*}
\int_{D_{t_nS} (x_n)\setminus D_{t_nS
e^{-T}}(x_n)}(|x|^{2\alpha_n}e^{2u_n}+|\Psi_n|^4)dx<\epsilon.
\end{equation*} if $S$ is large enough.

\

{\bf Fact II.2}: For any small $\epsilon>0$, and $T>0$, we may choose an $N(T)>0$ such that when $n\geq N(T)$
\begin{eqnarray*}
& & \int_{D_{\tau_nRe^T}(x_n)\setminus
D_{\tau_nR}(x_n)}(|x|^{2\alpha_n}e^{2u_n}+|\Psi_n|^4)\\
&=& \int_{D_{Re^T}\setminus
D_{R}}(|\frac {x_n}{|x_n|}+\frac{\tau_n}{|x_n|}x|^{2\alpha_n}e^{2\widetilde{u}_n}+|\widetilde{\Psi}_n|^4)\\
&\rightarrow&  \int_{D_{Re^T}\setminus
D_{R}}(e^{2\widetilde{u}}+|\widetilde{\Psi}|^4)\\
& < & \epsilon,
\end{eqnarray*}
if $R$ is large enough.

\

Now we argue  by contradiction to show \eqref{6.1} by using the
above two facts. We assume that there exists $\epsilon_0>0$ and a sequence $r_n\in [e\tau_nR, t_n S]$ such that
$$
\int_{D_{r_n}(x_n)\setminus
D_{e^{-1}r_n}(x_n)}(|x|^{2\alpha_n}e^{2u_n}+|\Psi_n|^{4})\geq \epsilon_0.
$$
Then, by the above two facts, we know that $\frac {t_nS}{r_n}\rightarrow +\infty$ and
$\frac{\tau_nR}{r_n}\rightarrow 0$, in particular, $r_n\rightarrow 0$  as $n\rightarrow +\infty$. Note that $|\frac {x_n}{r_n}|= |\frac {t_n}{r_n}| \rightarrow +\infty$ as $n\rightarrow \infty$. We define
\begin{equation}\label{scal-2}
\left\{
\begin{array}{rcl}
v_n(x)&=& u_n(r_nx+x_n)+\ln (r_n|x_n|^{\alpha_n}), \\
\varphi_n(x)&=& r_n^{\frac 12}\Psi_n(r_nx+x_n).\\
\end{array}
\right.
\end{equation}
Then $(v_n,\varphi_n)$ satisfies
\begin{equation*}
\left\{
\begin{array}{rcl}
-\Delta v_n(x) &=& 2V^2(r_nx+x_n)|\frac{x_n}{|x_n|}+\frac{r_nx}{|x_n|}|^{2\alpha_n}e^{2v_n(x)}-V(r_nx+x_n)|\frac{x_n}{|x_n|}+\frac{r_nx}{|x_n|}|^{\alpha_n}e^{v_n(x)}
|\varphi_n|^2, \\
\slashiii{D}\varphi_n(x) &=& -V(r_nx+x_n)|\frac{x_n}{|x_n|}+\frac{r_n}{|x_n|}x|^{\alpha_n}e^{v_n(x)}\varphi_n(x), \\
\end{array}
\right.
\end{equation*} in $D_{\frac{t_nS}{r_n}} \setminus D_{\frac{\tau_nR}{r_n}}$,
and
$$ \int_{e^{-1}\leq |x|\leq 1}(|\frac{x_n}{|x_n|}+\frac{r_n}{|x_n|}x|^{2\alpha_n}e^{2v_n}+|\varphi_n|^{4})\geq \epsilon_0.
$$ By Theorem \ref{mainthm}, there are three possible cases. However, similarly to the Case I, we can rule out the first and the second possible cases. If  the third  case happens, then  for any $R>0$, there is no blow-up point in $D_R\setminus D_{\frac 1R}$ and $(v_n,\varphi_n)$ is uniformly bounded in $\overline {D_R\setminus D_{\frac 1R}}$.
Then, there is a solution $(v,\varphi)$ satisfying
\begin{equation}\label{6.3}
\left\{
\begin{array}{rcll}
-\Delta v &=& 2V^2(0)e^{2v}-V(0)e^{v}|\varphi|^2, & \quad \text{ in } \R^2\setminus\{0\} \\
\slashiii{D}\varphi &=& -V(0)e^{v}\varphi, & \quad \text{ in
}\R^2\setminus\{0\}\\
\end{array}
\right.
\end{equation}
with finite energy
$\int_{\R^2}(e^{2v}+|\varphi|^4)dx < \infty$, such that
$$\lim_{n\rightarrow \infty}  \left (  ||v_n - v ||_{C^{2}(D_R \setminus D_{\frac 1R})} +
 ||\varphi_n - \varphi||_{C^{2}(D_R\setminus D_{\frac 1R})} \right ) =0,
$$ for any $R>0$.

\
\
Next we shall use the Pohozaev identity to remove the two singularities to get another bubble.

 Firstly, since  $(u_n,\Psi_n)$ satisfies (\ref{Eq-Sn}) and \eqref{Eq-Cn} in $D_{2\delta}$, the following Pohozaev identity holds for any $\rho>0$ with $r_n\rho<  t_n$,
\begin{eqnarray*}
&&r_n\rho\int_{\partial D_{r_n\rho}(x_n)} |\frac {\partial u_n}{\partial \nu}|^2-\frac 12|\nabla u_n|^2d\sigma \\
&=&\int_{D_{r_n\rho}(x_n)}(2V^2(x)|x|^{2\alpha_n}e^{2u_n}-V(x)|x|^{\alpha_n}e^{u_n}|\Psi_n|^2)dx -r_n\rho\int_{\partial D_{r_n\rho}(x_n)}V^2(x)|x|^{2\alpha_n}e^{2u_n}d\sigma\\
& &+\frac 12\int_{\partial
D_{r_n\rho}(x_n)}\la\frac {\partial \Psi_n}{\partial \nu}, (x-x_n)\cdot\Psi_n\ra +\la
(x-x_n)\cdot\Psi_n, \frac {\partial \Psi_n}{\partial \nu}\ra d\sigma\\
& & +\int_{D_{r_n\rho}(x_n)}(e^{2u_n}(x-x_n)\cdot\nabla (V^2(x)|x|^{2\alpha_n})-e^{u_n}|\Psi_n|^2(x-x_n)\cdot \nabla
(V(x)|x|^{\alpha_n}))dx.
\end{eqnarray*}
Here we have used the fact that $|x|^{2\alpha_n}$ is smooth in $D_{r_n\rho}(x_n) \subset \R^2 \setminus \{0\}$.

Noticing again that
\begin{equation*}
\left\{
\begin{array}{rcl}
v_n(x)&=& u_n(r_nx+x_n)+\ln (r_n|x_n|^{\alpha_n}), \\
\varphi_n(x)&=& r_n^{\frac 12}\Psi_n(r_nx+x_n). \\
\end{array}
\right.
\end{equation*}
Hence, the Pohozaev constant associated with $(v_n,\varphi_n)$ (see definition \eqref{scal-2}) satisfies
\begin{eqnarray*}
& & C(v_n,\varphi_n)= C(v_n,\varphi_n,\rho)\\
& &=\rho\int_{\partial D_{\rho}} |\frac {\partial v_n}{\partial \nu}|^2-\frac 12|\nabla v_n|^2d\sigma \\
&&-\int_{D_{\rho}}(2V^2(r_nx+x_n)|\frac{x_n}{|x_n|}+\frac {r_n}{|x_n|}x|^{2\alpha_n}e^{2v_n}
-V(r_nx+x_n)|\frac{x_n}{|x_n|}+\frac {r_n}{|x_n|}x|^{\alpha_n}e^{v_n}|\varphi_n|^2)dx \\
 &&+\rho\int_{\partial D_{\rho}}V^2(r_nx+x_n)|\frac{x_n}{|x_n|}+\frac {r_n}{|x_n|}x|^{2\alpha_n}e^{2v_n}d\sigma-\frac 12\int_{\partial
D_{\rho}}\la\frac {\partial \varphi_n}{\partial \nu}, x\cdot\varphi_n\ra +\la
x\cdot\varphi_n, \frac {\partial \varphi_n}{\partial \nu}\ra d\sigma\\
& & -\int_{D_{\rho}}(e^{2v_n}x\cdot\nabla (V^2(r_nx+x_n)|\frac{x_n}{|x_n|}+\frac {r_nx}{|x_n|}|^{2\alpha_n})-e^{v_n}|\varphi_n|^2x\cdot \nabla
(V(r_nx+x_n)|\frac{x_n}{|x_n|}+\frac {r_nx}{|x_n|}|^{\alpha_n}))dx \\
&&= 0
\end{eqnarray*}
Note that $(v_n,\varphi_n)$ converges to $(v,\varphi)$ in
$C^{2}_{loc}(\R^2\setminus \{0)\})\times C^{2}_{loc}(\Gamma (\Sigma \R^2\setminus \{0\})) $ and
$|\frac{x_n}{|x_n|}+\frac {r_n}{|x_n|}x|^{\alpha_n}$ is a smooth function in $D_{\delta}$ for $\delta>0$ small enough. Therefore, we have
\begin{eqnarray*}
0 &=& \lim_{\rho \rightarrow 0} \lim_{n\rightarrow \infty}C(v_n,\varphi_n, \rho)\\
 &=& \lim_{\rho \rightarrow 0}C(v,\varphi,\rho)\\
 &&- \lim_{\delta\rightarrow 0}\lim_{n\rightarrow \infty}\int_{D_\delta}(2V^2(r_nx+x_n)|\frac{x_n}{|x_n|}+\frac {r_nx}{|x_n|}|^{2\alpha_n}e^{2v_n}
-V(r_nx+x_n)|\frac{x_n}{|x_n|}+\frac {r_nx}{|x_n|}|^{\alpha_n}e^{v_n}|\varphi_n|^2)dx\\
&=& C(v,\varphi)-\beta.
\end{eqnarray*}
Here $$\beta=\lim_{\delta \rightarrow 0}\lim_{n\rightarrow \infty}\int_{D_\delta}(2V^2(r_nx+x_n)|\frac{x_n}{|x_n|}+\frac {r_n}{|x_n|}x|^{2\alpha_n}e^{2v_n}
-V(r_nx+x_n)|\frac{x_n}{|x_n|}+\frac {r_n}{|x_n|}x|^{\alpha_n}e^{v_n}|\varphi_n|^2)dx,$$
and $C(v,\varphi)$ is the Pohozaev constant with respect to \eqref{6.3}, i.e.
\begin{eqnarray*}
C(v,\varphi)&=& \rho\int_{\partial D_\rho} |\frac {\partial v}{\partial \nu}|^2-\frac 12|\nabla v|^2d\sigma \\
&& -\int_{D_\rho}(2V^2(0)e^{2v}-V(0)e^{v}|\varphi|^2)dx \\
&& +\rho\int_{\partial D_\rho}V^2(0)e^{2v}d\sigma-\frac 12\int_{\partial
D_\rho}\la\frac {\partial \varphi}{\partial \nu}, x\cdot\varphi\ra +\la
x\cdot\varphi, \frac {\partial \varphi}{\partial \nu}\ra d\sigma.
\end{eqnarray*}

\
\

On the other hand, since  $(v_n,\varphi_n)$ converges to $(v,\varphi)$ in $C^{2}_{loc}(\R^2\setminus \{0\})\times C^{2}_{loc}(\Gamma (\Sigma \R^2\setminus \{0\})) $, we have
\begin{eqnarray*}
&&¡¡2V^2(r_nx+x_n)|\frac{x_n}{|x_n|}+\frac {r_n}{|x_n|}x|^{2\alpha_n}e^{2v_n}
-V(r_nx+x_n)|\frac{x_n}{|x_n|}+\frac {r_n}{|x_n|}x|^{\alpha_n}e^{v_n}|\varphi_n|^2\\
&&¡¡\rightarrow \nu=2V^2(0)e^{2v}-V(0)e^{v}|\varphi|^2+\beta\delta_{p=0}
\end{eqnarray*}  weakly in the sense of measures in $B_R$ for any sufficient small $R>0$.
Then, applying similar arguments as in {\bf Case I}, we can show that
$$
v(x)=-\frac{\beta}{2\pi}\log|x|+w(x)+h(x),
$$
with $w(x)$ being a bounded term and $h(x)$ being a regular term and furthermore we have
$$
C(v,\varphi)=\frac {\beta^2}{4\pi}.
$$
Hence there holds
$$
\frac {\beta^2}{4\pi}= \beta.
$$
Since $\int_{B_R}e^{2v}dx<\infty$, we have $\beta\leq 2\pi$.
Therefore we deduce that $C(v,\varphi)=0$, $\beta=0$ and hence the singularities at $0$ and $\infty$ of \eqref{6.3} can be removed. Then we get another bubble on $\S^2$. Thus we get a contradiction and complete the proof of \eqref{6.1}.

\
\

Next, similarly to {\bf Case I }, we can prove the following:

\
\

\noindent {\bf Claim II.2} :\ We can separate $A_{\delta, R, n}(x_n)$ into
finitely many parts
$$
A_{\delta, R, n}(x_n)=\bigcup_{k=1}^{N_k}A_k$$ such that on each part
\begin{equation}\label{e4}
\int_{A_k}|x|^{2\alpha_n}e^{2u_n}dx\leq \frac {1}{4\Lambda^2}, \quad k=1,2,\cdots, N_k,
\end{equation}
where  $N_k\leq N_0$ with $N_0$ being an uniform integer for all
$n$ large enough, $A_k=D_{r^{k-1}}(x_n)\setminus D_{r^k}(x_n)$, $r^0=t_n S, r^{N_k}=\tau_nR
$, $r^k<r^{k-1}$ for $k=1,2,\cdots, N_k$, and $\Lambda>0$ is the constant as in Lemma \ref{main-lamm}.

\

Then, we can use {\bf Claim II.1} and {\bf Claim II.2} to show (\ref{e3-xn}). This finishes the proof of the theorem in the second case.
\hfill{$\square$}

\
\

\section{Blow-up Behavior}\label{blow}
With  the energy identity for spinors in place, we can now rule out the possibility that $u_n$ is uniformly
bounded in $L^{\infty}_{loc}(B_r\setminus\Sigma_1)$ in Theorem \ref{mainthm} and hence the result can be improved.

\

\noindent{\bf Proof of Theorem \ref{mainthm1}:} We shall prove this by contradiction. Assume that the conclusion of the theorem is false.
Then by Theorem \ref{mainthm}, $u_n$ is uniformly bounded in $L^{\infty}$ on any compact subset of $B_r(0)\backslash\Sigma_1$.
Since $(u_n,\Psi_n)$ is a sequence of solutions to \eqref{Eq-Sn} with uniformly bounded energy (\ref{Eq-Cn}), by classical elliptic estimates for
both the Laplacian $\Delta $ and the Dirac operator $\slashiii{D}$, we know that $(u_n, \Psi_n)$
converges in $C^2$ on any compact subset of $B_r(0) \setminus \Sigma_1$ to some limit solution $(u,\Psi)$ of \eqref{Eq-L} with bounded energy $\int_{B_r(0)}(|x|^{2\alpha}e^{2u}+|\Psi|^4) < +\infty.$

Since the blow-up set $\Sigma_1$ is not empty, we can take a point $p \in \Sigma_1$. Choose a small $\delta_0>0$ such that
$p$ is the only point of $\Sigma _1$ in $\overline{B}_{2\delta_0} (p)\subset B_r(0)$. Without loss of generality, we assume that $p=0$. The case of $p\neq 0$ can be handled in an analogous way.

We shall first show that the limit $(u,\Psi)$ is $C^2$ at the isolated singularity $p=0$. In fact, since $(u_n,\Psi_n)$ satisfies the
Pohozaev identity on $D_{\rho}$ for $0<\rho<\delta_0$, the Pohozaev constant $C(u_n, \Psi_n)=C(u_n, \Psi_n,\rho)$ satisfies
\begin{eqnarray*}
0 = C(u_n, \Psi_n) &=& C(u_n, \Psi_n,\rho)\\
&=& \rho\int_{\partial D_\rho} |\frac {\partial u_n}{\partial \nu}|^2-\frac 12|\nabla u_n|^2d\sigma \\
& & -(1+\alpha_n)\int_{D_\rho}(2V^2(x)|x|^{2\alpha_n}e^{2u_n}-V(x)|x|^{\alpha_n}e^{u_n}|\Psi_n|^2)dx \\
 & & +\rho\int_{\partial D_\rho}V^2(x)|x|^{2\alpha_n}e^{2u_n}d\sigma-\frac 12\int_{\partial
D_\rho}\la\frac {\partial \Psi_n}{\partial \nu}, x\cdot\Psi_n\ra +\la
x\cdot\Psi_n, \frac {\partial \Psi_n}{\partial \nu}\ra d\sigma\\
& & -\int_{D_{\rho}}(|x|^{2\alpha_n}e^{2u_n}x\cdot\nabla (V^2(x))-|x|^{\alpha_n}e^{u_n}|\Psi_n|^2x\cdot \nabla
V(x))dx.
\end{eqnarray*}
Since $(u_n,\Psi_n)$ converges to $(u,\Psi)$ in $C^2$ on any compact subset of $\overline{B}_{2\delta_0} \setminus \{0\}$, we have
\begin{eqnarray*}
0 &=& \lim_{\rho\rightarrow 0}\lim_{n\rightarrow \infty}C(u_n,\Psi_n, \rho)\\
 &=&\lim_{\rho\rightarrow 0}C(u,\Psi, \rho) -(1+\alpha)\lim_{\delta\rightarrow 0}\lim_{n\rightarrow \infty}\int_{D_\delta}(2V^2(x)|x|^{2\alpha_n}e^{2u_n}-V(x)|x|^{\alpha_n}e^{u_n}|\Psi_n|^2)dx\\
&=&C(u,\Psi)-(1+\alpha)\beta
\end{eqnarray*}
where
\begin{eqnarray*}
C(u,\Psi) &=& C(u, \Psi,\rho)\\
&=& \rho\int_{\partial D_\rho} |\frac {\partial u}{\partial \nu}|^2-\frac 12|\nabla u|^2d\sigma \\
& & -(1+\alpha)\int_{D_\rho}(2V^2(x)|x|^{2\alpha}e^{2u}-V(x)|x|^{\alpha}e^{u}|\Psi|^2)dx \\
 & & +\rho\int_{\partial D_\rho}V^2(x)|x|^{2\alpha}e^{2u}d \sigma-\frac 12\int_{\partial
D_\rho}\la\frac {\partial \Psi}{\partial \nu}, x\cdot\Psi\ra +\la
x\cdot\Psi, \frac {\partial \Psi}{\partial \nu}\ra d\sigma\\
& & -\int_{D_{\rho}}(|x|^{2\alpha}e^{2u}x\cdot\nabla (V^2(x))-|x|^{\alpha}e^{u}|\Psi|^2x\cdot \nabla
V(x))dx,
\end{eqnarray*}
and
$$\beta=\lim_{\delta\rightarrow 0}\lim_{n\rightarrow \infty}\int_{D_\delta}(2V^2(x)|x|^{2\alpha_n}e^{2u_n}-V(x)|x|^{\alpha_n}e^{u_n}|\Psi_n|^2)dx.$$
Moreover, we can also assume  that
\begin{eqnarray*}
2V^2(x)|x|^{2\alpha_n}e^{2u_n}-V(x)|x|^{\alpha_n}e^{u_n}|\Psi_n|^2 \rightarrow \nu =2V^2(x)|x|^{2\alpha}e^{2u}-V(x)|x|^{\alpha}e^{u}|\Psi|^2+\beta\delta_{p=0}
\end{eqnarray*}in the sense of distributions in $B_R$ for any small $R>0$. Then, applying similar arguments as in the proof of the local singularity removability in Claim I.1, Theorem \ref{engy-indt}, we can show that  $C(u,\Psi)=0$, $\beta = 0$ and hence $(u, \Psi)$ is a $C^2$ solution of \eqref{Eq-L} on $B_{2\delta_0}$ with bounded energy
$$\int_{B_{2\delta_0}}(|x|^{2\alpha} e^{2u}+|\Psi|^4)< +\infty.$$

\

Now we can choose some small $\delta_1 \in (0, \delta_0)$ such that for any $\delta \in (0, \delta_1)$,
\begin{equation} \label{less1/10}
\int_{B_{\delta}}(2V^2(x)|x|^{2\alpha}e^{2u}-V(x)|x|^{\alpha}e^{u}|\Psi|^2)dx < {\rm min} \{ \frac{1+\a}{10},  \frac{1}{10}\}.
\end{equation}

Next, as  in the proof of Theorem \ref{engy-indt}, we rescale $(u_n,\Psi_n)$ near $p=0$.
Choose $x_n\in B_{\delta_1}$ with $u_n(x_n)=\max_{\bar{B}_{\delta_1}}u_n(x)$. Then we have $x_n\rightarrow p$ and
$u_n(x_n)\rightarrow +\infty$. Let $\lambda_n=e^{\frac{-u_n(x_n)}{\alpha_n+1}}\rightarrow 0$ and denote $t_n=\max\{\lambda_n, |x_n|\}\rightarrow 0$.
We distinguish the following two cases:

\
\

\noindent{\bf Case I}:
$\frac{t_n}{\lambda_n}=O(1), \text{ as } n\rightarrow +\infty.
$

\
\

In this case, the rescaling functions are
\begin{equation*}
\left\{
\begin{array}{rcl}
\widetilde{u}_n(x)&=&u_n(t_nx)+(\alpha_n+1)\ln {t_n}\\
\widetilde{\Psi}_n(x)&=&t_n^{\frac
12}\Psi_n(t_nx)
\end{array}
\right.
\end{equation*}
for any $x\in \overline{D}_{\frac{\delta_1}{2t_n}}$. And by passing to a subsequence,
 $(\widetilde{u}_n,\widetilde{\Psi}_n)$ converges in $C^{2}_{loc}(\R^2)$ to some $(\widetilde
u,\widetilde \Psi)$ satisfying
\begin{equation*}
\left\{
\begin{array}{rcl}
-\Delta \widetilde{u} &=& 2V^2(0)|x|^{2\alpha}e^{2\widetilde{u}}-V(0)|x|^{\alpha}e^{\widetilde u}|
\Psi|^2, \\
\slashiii{D}\widetilde{\Psi} &=&
-V(0)|x|^{\alpha}e^{\widetilde{u}}\widetilde{\Psi},  \end{array} \text { in } \R^2
\right.
\end{equation*}
with
\begin{equation}
\label{7.1}\int_{\R^2}(2V^2(0)|x|^{2\alpha}e^{2\widetilde{u}}-V(0)|x|^{\alpha}e^{\widetilde {u}}|\widetilde{\Psi}|^2)dx=4\pi(1+\alpha).
\end{equation}
Then for $\delta\in(0, \delta_1)$ small enough, $R>0$ large enough  and $n$ large enough, we have
\begin{eqnarray}
& & \int_{B_{\delta}}(2V^2(x)|x|^{2\alpha_n}e^{2u_n}-V(x)|x|^{\alpha_n}e^{u_n}|\Psi_n|^2)dx \nonumber\\
&=&\int_{B_{t_n R}}(2V^2(x)|x|^{2\alpha_n}e^{2u_n}-V(x)|x|^{\alpha_n}e^{u_n}|\Psi_n|^2)dx\nonumber\\
& & +\int_{B_\delta\backslash B_{t_nR}}(2V^2(x)|x|^{2\alpha_n}e^{2u_n}-V(x)|x|^{\alpha_n}e^{u_n}|\Psi_n|^2)dx\nonumber\\
&\geq & \int_{B_{R}}(2V^2(t_nx)|x|^{2\alpha_n}e^{2\widetilde{u}_n}-V(t_nx)|x|^{\alpha_n}e^{\widetilde{u}_n}|\widetilde{\Psi}_n|^2)
 -\int_{B_\delta\backslash B_{t_nR}}V(x)|x|^{\alpha_n}e^{u_n}|\Psi_n|^2 \nonumber\\
& \geq &  4\pi(1+\alpha) - \frac{1+\a}{10}.  \label{almost2pi}
\end{eqnarray}
Here in the last step, we have used \eqref{7.1} and the fact from Theorem \ref{engy-indt} that the neck energy of the spinor field $\Psi_n$ is converging to zero. We remark that in the above estimate, if there are multiple bubbles then we need to decompose
$B_\delta\backslash B_{t_nR}$ further into bubble domains and neck domains and then apply the no neck energy result in Theorem \ref{engy-indt}
to each of these neck domains.

On the other hand, we fix some $\delta\in(0, \delta_1)$ small such that \eqref{almost2pi} holds and then let $n\rightarrow \infty$
to conclude that
\begin{eqnarray*}
4\pi(1+\alpha) - \frac{1+\a}{10} &\leq & \int_{B_{\delta}}(2V^2(x)|x|^{2\alpha_n}e^{2u_n}-V(x)|x|^{\alpha_n}e^{u_n}|\Psi_n|^2)dx\\
&  =& -\int_{B_{\delta}}\Delta u_n= -\int_{\partial B_{\delta}}\frac{\partial u_n}{\partial n}\\
& \rightarrow &   -\int_{\partial B_{\delta}}\frac{\partial u}{\partial n} =  -\int_{B_{\delta}}\Delta u\\
& =& \int_{B_{\delta}}(2V^2(x)|x|^{2\alpha}e^{2u}-V(x)|x|^{\alpha}e^{u}|\Psi|^2)dx < \frac{1+\a}{10}
\end{eqnarray*}
Here in the last step, we have used \eqref{less1/10}. Thus we get a contradiction and finish the proof of the Theorem in this case.

\
\

\noindent{\bf Case II}:  $\frac{t_n}{\lambda_n}\rightarrow +\infty$, as $n\rightarrow \infty.$

\
\

In this case, we should  rescale twice to get the bubble.
First, since $t_n=|x_n|$, we define the rescaling functions

\begin{equation*}
\left\{
\begin{array}{rcl}
\bar{u}_n(x)&=&u_n(t_nx)+(\alpha_n+1)\ln {t_n}\\
\bar{\Psi}_n(x)&=&t_n^{\frac
12}\Psi_n(t_nx)
\end{array}
\right.
\end{equation*}
for any $x\in \overline{D}_{\frac{\delta}{2t_n}}$. Set  $y_n:=\frac{x_n}{t_n}$. Noticing that $\bar{u}_n(y_n)\rightarrow +\infty$, we set that $\delta_n=e^{-\bar{u}_n(y_n)}$ and define the rescaling function
\begin{equation*}
\left\{
\begin{array}{rcl}
\widetilde{u}_n(x)&=&\bar{u}_n(\delta_nx+y_n)+\ln {\delta_n}\\
\widetilde{\Psi}_n(x)&=&\delta_n^{\frac 12}\bar{\Psi}_n(\delta_nx+y_n)
\end{array}
\right.
\end{equation*}
for any $\delta_nx+y_n\in \overline{D}_{\frac{\delta}{2t_n}}$. Without loss of generality, we assume that $y_0=\lim_{n\rightarrow \infty}\frac{x_n}{t_n}$.
Then  by also passing to a subsequence,
$(\widetilde{u}_n,\widetilde{\Psi}_n)$ converges in $C^{2}_{loc}(\R^2)$ to some $(\widetilde
u,\widetilde \Psi)$ satisfying
\begin{equation*}
\left\{
\begin{array}{rcl}
-\Delta \widetilde{u} &=& 2V^2(0)e^{2\widetilde{u}}-V(0)e^{\widetilde u}|
\Psi|^2, \\
\slashiii{D}\widetilde{\Psi} &=&
-V(0)e^{\widetilde{u}}\widetilde{\Psi},  \end{array} \text { in } \R^2
\right.
\end{equation*}
with
\begin{equation}\label{7.2} \int_{\R^2}(2V^2(0)e^{2\widetilde{u}}-V(0)e^{\widetilde{u}}|\widetilde{\Psi}|^2)dx=4\pi.
\end{equation}
Now fixing $\delta\in(0, \delta_1)$ small enough, $S, R>0$ large enough  and $n$ large enough,
by using \eqref{7.2} and the fact that the neck energy of the spinor field $\Psi_n$
is converging to zero, we have
\begin{eqnarray*}
 && \int_{B_{\delta}}(2V^2(x)|x|^{2\alpha_n}e^{2u_n}-V(x)|x|^{\alpha_n}e^{u_n}|\Psi_n|^2)dx \nonumber\\
&=&\int_{B_{\frac{\delta}{t_n}}}(2V^2(t_nx)|x|^{2\alpha_n}e^{2\bar{u}_n}-V(t_nx)|x|^{\alpha_n}e^{\bar{u}_n}|\bar{\Psi}_n|^2)dx \nonumber\\
&=&\int_{B_{\frac{\delta}{t_n}}\setminus B_S(y_n)}(2V^2(t_nx)|x|^{2\alpha_n}e^{2\bar{u}_n}-V(t_nx)|x|^{\alpha_n}e^{\bar{u}_n}|\bar{\Psi}_n|^2)dx \nonumber\\
&&+\int_{ B_S(y_n)\setminus B_{\frac{\tau_n}{t_n}R}(y_n)}(2V^2(t_nx)|x|^{2\alpha_n}e^{2\bar{u}_n}-V(t_nx)|x|^{\alpha_n}e^{\bar{u}_n}|\bar{\Psi}_n|^2)dx \nonumber\\
&&+\int_{B_{\frac{\tau_n}{t_n}R}(y_n)}(2V^2(t_nx)|x|^{2\alpha_n}e^{2\bar{u}_n}-V(t_nx)|x|^{\alpha_n}e^{\bar{u}_n}|\bar{\Psi}_n|^2)dx \nonumber\\
&\geq & \int_{B_{R}}(2V^2(x_n+\tau_nx)|\frac{x_n}{|x_n|}+\frac{\tau_n}{|x_n|}x|^{2\alpha_n}e^{2\widetilde{u}_n(x)}
-V(x_n+\tau_nx)|\frac{x_n}{|x_n|}+\frac{\tau_n}{|x_n|}x|^{\alpha_n}
e^{\widetilde{u}_n(x)}|\widetilde{\Psi}_n|^2)dx \nonumber\\
& & -\int_{B_{t_nS}\backslash B_{\tau_nR}(x_n)}V(x)|x|^{\alpha_n}e^{u_n}|\Psi_n|^2 - \int_{B_{\frac{\delta}{t_n}}\setminus B_S(y_n)}V(t_nx)|x|^{\alpha_n}e^{\bar{u}_n}|\bar{\Psi}_n|^2 \nonumber\\
& \geq &  4\pi - \frac{1}{10}.
\end{eqnarray*}
Then, applying similar arguments as in {\bf Case I} we get a contradiction. Thus we finish the proof of the Theorem.
\hfill{$\square$}

\
\

\section{Blow-up Value}\label{value}

In this section, we shall further investigate the blow-up behavior of a sequence of solutions of (\ref{Eq-Sn}) and (\ref{Eq-Cn}).
Let $m(p)$ be the blow-up value at a blow-up point $p \in \Sigma_1$ defined as in \eqref{mp}. It is clear from the result in Theorem \ref{mainthm1} that $m(p)\geq 4\pi$. Now we shall determine the precise value of $m(p)$ under a boundary condition.

\

\noindent{\bf Proof of Theorem \ref{BV}:} Without loss of generality, we assume $p=0$. The case of $p\neq 0$ can be handled analogously.
 It follows from the boundary condition in
(\ref{bc}) that $0\leq u_n-\min_{\partial B_{\rho_0}(p)}u_n\leq C$ on
$\partial B_{\rho_0}(p)$. Define $w_n$ as the unique solution of the following Dirichlet problem
\begin{equation*}
\left\{
\begin{array}{lcl}
-\Delta w_n=0, &\text{in } B_{\rho_0}(p),\\
w_n=u_n-\min_{\partial B_{\rho_0}}u_n, &\text{on }\partial B_{\rho_0}(p).
\end{array}
\right.
\end{equation*}
By the maximum principle,  $w_n$ is uniformly bounded
in $B_{\rho_0}(p)$ and consequently $w_n$ is $C^2$ in $B_{\rho_0}(p)$. Furthermore, the function $v_n=u_n-\min_{\partial
B_{\rho_0}(p)}u_n-w_n$ solves the Dirichlet problem
\begin{equation*}
\left\{
\begin{array}{lcl}
-\Delta v_n=2V^2(x)|x|^{2\alpha_n}e^{2u_n}-V(x)|x|^{\alpha_n}e^{u_n}|\Psi_n|^2, &\text{in } B_{\rho_0}(p),\\
v_n=0, &\text{on }\partial B_{\rho_0}(p),
\end{array}
\right.
\end{equation*}
with the energy condition
$$\int_{B_{\rho_0}(p)}(2V^2(x)|x|^{2\alpha_n}e^{2u_n}-V(x)|x|^{\alpha_n}e^{u_n}|\Psi_n|^2)dx\leq C. $$
By  Green's representation formula, we have
\[v_n(x)=\frac 1{2\pi}\int_{B_{\rho_0}(p)}\log\frac
1{|x-y|}(2V^2(y)|y|^{2\alpha_n}e^{2u_n}-V(y)|y|^{\alpha_n}e^{u_n}|\Psi_n|^2)dy+R_n(x)
\]
where $R_n(x) \in C^1 (B_{\rho_0}(p))$ is a regular term. Since $p=0$ is the only blow-up point in $B_{\rho_0}(p)$, from Theorem \ref{mainthm1}, we know
\begin{equation}\label{8.2}
v_n(x)\rightarrow \frac{m(p)}{2\pi}\ln\frac{1}{|x|}+R(x), \text{ in } C^{1}_{loc}(B_{\rho_0}(p)\setminus\{0\})
\end{equation}
for $R(x)\in C^1(B_{\rho_0}(p))$.
On the other hand, we observe that $(v_n,\Psi_n)$ satisfies
\begin{equation*}
\left\{
\begin{array}{lcl}
-\Delta v_n &=& 2K_n^2(x)|x|^{2\alpha_n}e^{2v_n}-K_n(x)|x|^{\alpha_n}e^{v_n}|\Psi_n|^2, \\
\slashiii{D}{\Psi_n} &=&-K_n(x)e^{v_n}{\Psi_n},
\end{array}\quad \text{in } B_{\rho_0}(p)
\right.
\end{equation*}
where $K_n=V(x)e^{\min_{\partial B_{\rho_0}(p)}u_n+w_n}$. Noticing that $p=0$, the Pohozaev identity of $(v_n,\Psi_n)$ in $B_{\rho}(p)$ for $0<\rho<\rho_0$ is
\begin{eqnarray}\label{8.3}
&&\rho\int_{\partial B_\rho(0)} |\frac {\partial v_n}{\partial \nu}|^2-\frac 12|\nabla v_n|^2d\sigma \nonumber\\
&=&(1+\alpha_n                     )\int_{B_\rho(0)}(2K_n^2(x)|x|^{2\alpha_n}e^{2v_n}-K_n(x)|x|^{\alpha_n}e^{v_n}|\Psi_n|^2)dx \nonumber\\
& & -\rho\int_{\partial B_\rho(0)}K_n^2(x)|x|^{2\alpha_n}e^{2v_n}d\sigma+\frac 12\int_{\partial
B_\rho(0)}\la\frac {\partial \Psi_n}{\partial \nu}, x\cdot\Psi_n\ra +\la
x\cdot\Psi_n, \frac {\partial \Psi_n}{\partial \nu}\ra d\sigma\nonumber\\
& & +\int_{B_\rho(0)}(|x|^{2\alpha_n}e^{2v_n}x\cdot\nabla (K_n^2(x))-|x|^{\alpha_n}e^{v_n}|\Psi_n|^2x\cdot \nabla
K_n(x))dx.
\end{eqnarray}

By (\ref{8.2}), we have
$$
\lim_{\rho\rightarrow 0}\lim_{n\rightarrow \infty}\rho\int_{\partial B_\rho(0)}
|\frac {\partial v_n}{\partial \nu}|^2-\frac 12|\nabla
v_n|^2d\sigma=\lim_{\rho\rightarrow 0}\rho\int_{\partial B_\rho(0)}\frac 12 |\frac
{\partial (\frac{m(p)}{2\pi}\ln\frac{1}{|x|})}{\partial \nu}|^2d\sigma=\frac
1{4\pi}m^2(p).
$$
Since $u_n\rightarrow -\infty$ uniformly on $\partial B_{\rho}(0)$, we also have
$$
\lim_{\rho\rightarrow 0}\lim_{n\rightarrow \infty} \rho\int_{\partial B_\rho(0)}K_n^2(x)|x|^{2\alpha_n}e^{2v_n}d\sigma =\lim_{\rho\rightarrow 0}\lim_{n\rightarrow \infty}
\rho\int_{\partial B_\rho(0)}V^2(x)|x|^{2\alpha_n}e^{2u_n}d\sigma=0.
$$
Noticing that  $\int_{B_{\rho_0}(0)}(|x|^{2\alpha_n}e^{2u_n}+\Psi_n|^4)dx\leq C$, we can obtain that
$$
\lim_{\rho\rightarrow 0}\lim_{n\rightarrow \infty}\int_{B_\rho(0)}(|x|^{2\alpha_n}e^{2v_n}x\cdot\nabla (K_n^2(x))-|x|^{\alpha_n}e^{v_n}|\Psi_n|^2x\cdot \nabla
K_n(x))dx=0.
$$
Since $u_n\rightarrow -\infty$ uniformly in $B_{2\rho}(0)\backslash B_{\frac \rho 4}(0)$, and  $|\Psi_n|$ is uniformly bounded in
$B_{2\rho}(0)\backslash B_{\frac \rho4}(0)$ for any $\rho>0$, we know
$$\slashiii{D}\Psi=0,  ~~~ \text{ in } B_{\rho_0}\setminus\{0\}.$$
Since the local singularity of a harmonic spinor with finite energy is removable, we have
$$\slashiii{D}\Psi=0,  ~~~ \text{ in } B_{\rho_0}.$$
It follows that $\Psi$ is smooth in $B_{\rho_0}$.
Therefore we obtain that
$$
\lim_{\rho\rightarrow 0}\lim_{n\rightarrow \infty}\int_{\partial
B_\rho(0)}|\Psi_n||x\cdot \nabla \Psi_n|d\sigma=0.
$$
Let $n\rightarrow \infty$ and then $\rho\rightarrow 0$  in (\ref{8.3}), we
get that $$ \frac{1}{4\pi}m^2(p)=(1+\alpha)m(p).$$ It follows that
$m(p)=4\pi(1+\alpha)$. Thus we finish the proof of Theorem \ref{BV}.
\hfill{$\square$}

\
\section{The global super-Liouville system on a singular Riemann surface}\label{global}

In this section, we study the blow-up behavior of a sequence of solutions of the global super-Liouville system
on a singular Riemann surface and  prove Theorem \ref{thmsin} and Theorem \ref{global-blowup}.

\

\noindent{\bf Proof of Theorem \ref{thmsin}:}
Since $g=e^{2\phi}g_0$ with $g_0$ being smooth, then by the well known properties of $\phi$ (see e.g. \cite{T1} or \cite{BDM}, p. 5639), we know that $(u_n,\psi_n)$ satisfies
\begin{equation*}
\left\{
\begin{array}{rcl}
-\Delta_{g_0} (u_n+\phi) &=& \ds\vs 2e^{2(u_n+\phi)}-e^{u_n+\phi}\left\langle e^{\frac {\phi} 2}\psi_n ,e^{\frac {\phi} 2}\psi_n
\right\rangle -K_{g_0}  -   \sum_{j=1}^m  2\pi\alpha_j \delta_{q_j}   \qquad
\\
\slashiii{D}_{g_0}(e^{\frac {\phi} 2}\psi_n) &=&\ds  -e^{u_n+\phi}(e^{\frac {\phi} 2}\psi_n)
\end{array} \text {in } M.
\right.
\end{equation*}
with the energy conditions:
$$
\int_{M}e^{2(u_n+\phi)}dg_0<C,~~~~~~\int_{M}|e^{\frac {\phi} {2} }\psi_n|^4dg_0<C.
$$
If we define the blow-up set of $u_n + \phi$ as
$$
\Sigma '_1=\left\{ x\in M,\text{ there is a sequence
}y_n\rightarrow x\text{ such that }(u_n+\phi)(y_n)\rightarrow +\infty
\right\}, $$
then by Remark \ref{rem3.4}, we have  $\Sigma_1=\Sigma'_1$. By the blow-up results of the local sytem, it follows that one of the following alternatives holds:
\begin{enumerate}
\item[i)]  $u_n$ is bounded in $L^{\infty}(M).$

\item[ii)]  $u_n$ $\rightarrow -\infty $ uniformly on $M$.

\item[iii)]  $\Sigma _1$ is finite, nonempty and
\begin{equation*}
u_n\rightarrow -\infty \text{ uniformly on compact subsets of
}M\backslash \Sigma _1.
\end{equation*}
Furthermore,
\begin{equation*}
2e^{2(u_n+\phi)}-e^{u_n+\phi}|e^{\frac{\phi}{2}}\psi_n|^2\rightharpoonup\sum_{p_i\in \Sigma_1}m(p_i)\delta_{p_i},
\end{equation*} in the sense of distributions.
\end{enumerate}

\
\

Now let $p=\frac q{q-1}>2$. We have
\begin{eqnarray*}
& & ||\nabla (u_n+\phi)||_{L^q(M,g_0)}\\
& \leq & \sup\{|\int_M\nabla (u_n+\phi)\nabla \varphi
dg_0||\varphi\in W^{1,p}(M,g_0),\int_{M}\varphi
dg_0=0,||\varphi||_{W^{1,p}(M,g_0)}=1\}.
\end{eqnarray*}
By the Sobolev embedding theorem, we get
$$
||\varphi ||_{L^{\infty}(M,g_0)\leq C}.
$$
It is clear that
\bee
|\int_{M}\nabla (u_n+\phi)\nabla\varphi dg_0| &=& |\int_M\Delta_{g_0} (u_n+\phi)\varphi
dg_0|  \nn \\
&\leq& \int_{M}(2e^{2(u_n+\phi)}+e^{u_n+\phi}|e^{\frac {\phi} {2}} \psi_n|^2 + |K_{g_0}|)|\varphi|dg_0
 + \sum_{j=1}^m  |\int_M 2\pi\alpha_j \delta_{q_j}   \varphi dg_0|\leq C.  \nn
\eee
Therefore, $u_n+\phi-\frac{1}{|M|}\int_{M}(u_n+\phi)dg_0$ is uniformly bounded in $W^{1,q}(M,g_0)$.

\
\

Next, we define the Green function $G$ by
\begin{equation*}
\left \{\begin{array}{l} - \Delta_{g_0}
G=\sum_{p\in\Sigma_1}m(p)\delta_p-K_{g_0}-\sum_{j=1}^m 2\pi\alpha_j \delta_{q_j},\\
\int_{M}Gdg_0=0.
\end{array}\right.
\end{equation*}
Then $G$ satisies \eqref{greenfct}. We have for any $\varphi\in C^{\infty}(M)$
\begin{eqnarray*}
& & \int_{M}\nabla (u_n+\phi-G)\nabla
\varphi dg_0 = -\int_{M}\Delta_{g_0}(u_n+\phi-G)\varphi dg_0\\
&=& \int_{M}( 2e^{2(u_n+\phi)}-e^{u_n+\phi}\left\langle e^{\frac {\phi} 2}\psi_n ,e^{\frac {\phi} 2}\psi_n
\right\rangle-\sum_{p\in
\Sigma_1}m(p)\delta_p)\varphi dg_0\rightarrow 0,\quad \text { as }
n\rightarrow \infty.
\end{eqnarray*}
Combining this with the fact that $u_n+\phi-\frac{1}{|M|}\int_{M}(u_n+\phi)dg_0$ is uniformly bounded in $W^{1,q}(M,g_0)$, we get the conclusion of the lemma.
\hfill{$\square$}

\
\

\noindent{\bf Proof of Theorm \ref{global-blowup}:} The result follows from Theorem \ref{thmsin} and the Gauss-Bonnet formula.\hfill{$\square$}

\

\section{The local super-Liouville equations with two coefficient functions}\label{two}

 In this section, we discuss  the following local super-Liouville type equations with two different coefficient functions:
\begin{equation}\label{equvc}
\left\{
\begin{array}{rcl}
-\Delta u(x) &=& 2V^2(x)|x|^{2\alpha}e^{2u(x)}-W(x)|x|^{\alpha}e^{u(x)}|\Psi|^2  \quad\\
\slashiii{D}\Psi &=& -W(x)|x|^{\alpha}e^{u(x)}\Psi
\end{array} \text { in } B_{r}(0),
\right.
\end{equation}
and with the energy condition
\begin{equation}\label{cequvc}
\int_{B_r(0)}|x|^{2\alpha}e^{2u}+|\Psi|^4dx<+\infty,
\end{equation}
where $\a > -1$ and $V(x), W(x) \in W^{1,\infty}(\overline{B_r(0)})$ satisfying $0< a\leq V(x), W(x) \leq b < + \infty$.
In analogy to the case considered in Section \ref{local}, we can define the notion of weak solutions
$(u,\Psi)\in W^{1,2}(B_r(0)) \times W^{1,\frac{4}{3}} (\Gamma(\Sigma B_r(0)))$ of (\ref{equvc}) and  (\ref{cequvc}) and
show that any such weak solution $(u,\Psi)$ is {\it regular} in the sense that $(u,\Psi)\in W^{1,p}(B_r(0)) \times W^{1,q} (\Gamma(\Sigma B_r(0)))$
 for some $p>2$ and some $q>2$ and $(u,\Psi)$ is  $C^{2}_{loc}\times C^{2}_{loc}$ in $B_r(0)\setminus \{0\} $.

\

Firstly, it is easy to check that the following Pohozaev type identity holds:
\begin{eqnarray*}
&&R\int_{\partial B_R(0)} |\frac {\partial u}{\partial \nu}|^2-\frac 12|\nabla u|^2d\sigma \nn\\
&=&(1+\alpha)\int_{B_R(0)}(2V^2(x)|x|^{2\alpha}e^{2u}-W(x)|x|^{\alpha}e^u|\Psi|^2)dx \nn\\
& & -R\int_{\partial B_R(0)}V^2(x)|x|^{2\alpha}e^{2u}d\sigma+\frac 12\int_{\partial
B_R(0)}(\la\frac {\partial \Psi}{\partial \nu}, x\cdot\Psi\ra +\la
x\cdot\Psi, \frac {\partial \Psi}{\partial \nu}\ra ) d\sigma   \nn\\
& & +\int_{B_R(0)}(|x|^{2\alpha}e^{2u}x\cdot\nabla (V^2(x))-|x|^{\alpha}e^u|\Psi|^2x\cdot \nabla
W(x))dx
\end{eqnarray*} for any regular solution $(u,\Psi)$ of (\ref{equvc}) and  (\ref{cequvc}) on $B_r(0)$ and for any $0<R<r$.

Secondly, when $(u,\Psi)$ is a regular solution of \eqref{equvc} and \eqref{cequvc} in $B_r(0)\setminus \{0\}$, we define the Pohozaev constant associated to $(u, \Psi)$ as follows
 \begin{eqnarray*}
C(u,\Psi)&=& R\int_{\partial B_R(0)} |\frac {\partial u}{\partial \nu}|^2-\frac 12|\nabla u|^2d\sigma \\
&-&(1+\alpha)\int_{B_R(0)}(2V^2(x)|x|^{2\alpha}e^{2u}-W(x)|x|^{\alpha}e^u|\Psi|^2)dx \\
& & +R\int_{\partial B_R(0)}V^2(x)|x|^{2\alpha}e^{2u}d\sigma-\frac 12\int_{\partial
B_R(0)}\la\frac {\partial \Psi}{\partial \nu}, x\cdot\Psi\ra +\la
x\cdot\Psi, \frac {\partial \Psi}{\partial \nu}\ra d\sigma\\
& & -\int_{B_R(0)}(|x|^{2\alpha}e^{2u}x\cdot\nabla (V^2(x))-|x|^{\alpha}e^u|\Psi|^2x\cdot \nabla
W(x))dx.
\end{eqnarray*}
Then the local singularity removability as in Proposition \ref{sigu-move1} holds.

Thirdly, for a bubble, namely an entire regular solution on $\R^2$ with bounded energy, we consider the following equation:
\begin{equation*}
\left\{
\begin{array}{rcl}
-\Delta u &=& \ds\vs 2a|x|^{2\alpha} e^{2u}-b|x|^{\alpha}e^u|\Psi|^2,\\
\slashiii{D}\Psi &=&\ds  -b|x|^{\alpha} e^u\Psi,
\end{array}\qquad \text { in } \R^2.
\right.
\end{equation*}
with $\alpha >-1 $ and for two real numbers $a>0$ and $b>0$. The energy condition is
\begin{equation*}
I(u,\Psi)=\int_{\R^2}(|x|^{2\alpha}e^{2u}+|\Psi|^4)dx<\infty.
\end{equation*}
By using its corresponding Pohozaev type identy, we can prove the same results as in Proposition \ref{asy} and Theorem \ref{remove-gs}. In particular, we have
$$ d =\int_{\R^2}2a|x|^{2\alpha}e^{2u} -b|x|^{\alpha}e^u|\Psi|^2dx=4\pi(1+\alpha).$$

Finally, for a sequence of regular solutions $(u_n, \Psi_n)$ to (\ref{Eq-Sn2}) and (\ref{Eq-Cn2}),  we define the blow-up value $m(p)$ at a blow-up point $p$ as
\begin{equation*}
m(p)=\lim_{\rho\rightarrow 0}\lim_{n\rightarrow \infty}\int_{B_\rho(p)}(2V_n^2(x)|x|^{2\alpha_n}e^{2u_n}-W_n(x)|x|^{\alpha_n}e^{u_n}|\Psi_n|^2)dx,
\end{equation*}
and we can show that the blow-up behaviors for $(u_n, \Psi_n)$ as in Theorem \ref{mainthm}, Theorem \ref{engy-indt}, Theorem \ref{mainthm1}, Theorem \ref{BV}
and Theorem \ref{thmsin} hold.

\


\begin{thebibliography}{JWZZ1}
\bibitem{ARS} C. Ahn, C. Rim and M. Stanishkov, {\it Exact one-point function of $N=1$ super-Liouville theory with boundary}, Nucl. Phys. B {\bf 636} (2002) 497-513, MR1917413, Zbl 0996.81045.

\bibitem{AHM} B. Ammann, E. Humbert and B. Morel, {\it Mass
endomorphism and spinorial Yamabe type problems on conformally
flat manifolds}, Comm. Anal. Geom. {\bf 14} (2006) 163-182,  MR2230574, Zbl 1126.53024.

\bibitem{baum} H. Baum, T. Friedrich, R. Grunewald and
I. Kath, {\it Twistor and Killing spinors on Riemannian manifolds},  Seminarber., Humboldt Universit\"at, Berlin, 1990, MR1084369, Zbl 0705.53004.

\bibitem{BM} H. Brezis and F. Merle, {\it Uniform estimates and blow-up
behavior for solutions of $-\Delta u=V(x)e^{u\text{ }}$ in two
dimensions}, Comm. Partial Differential Equations {\bf 16} (1991) 1223-1253, MR1132783,  Zbl 0746.35006.

\bibitem{B} D. Bartolucci, {\it A compactness result for periodic multivortices
in the electroweak theory}, Nonlinear Anal. {\bf 53} (2003) 277-297,  MR1959815, Zbl 1138.58308.

\bibitem{BCLT} D. Bartolucci, C. -C. Chen, C. -S. Lin and G. Tarantello, {\it Profile of blow-up solutions to mean field equations with singular data}, Comm. Partial Differential Equations {\bf 29} (2004) 1241-1265, MR2097983,  Zbl 1062.35146.

\bibitem{BDM} D. Bartolucci, F. De Marchis and A. Malchiodi, {\it Supercritical conformal metrics on surfaces with conical
singularities}, Int. Math. Res. Not. {\bf 24} (2011), 5625-5643,  MR2863376, Zbl 1254.30066.

\bibitem{BaMo} D. Bartolucci and E. Montefusco, {\it Blow-up analysis, existence and qualitative
properties of solutions of the two-dimensional Emden-Fowler equation with
singular potential}, Math. Methods Appl. Sci., {\bf 30} (2007) 2309-2327, MR2362955, Zbl 1149.35034.


\bibitem{bt} D. Bartolucci and G. Tarantello, {\it Liouville type equations with singular data and their applications
to periodic multivortices for the electroweak theory,} Comm. Math. Phys. {\bf 229} (2002) 3-47,  MR1917672, Zbl 1009.58011.

\bibitem{BT1} D. Bartolucci and G. Tarantello, {\it The Liouville Equation with Singular Data:
A Concentration-Compactness Principle via a Local Representation Formula}, J. Differ. Equations {\bf 185} (2002), 161-180,  MR1935635, Zbl 1247.35032.


\bibitem{CL1} W. Chen and C. Li, {\it Prescribing Gaussian Curvatures on Surfaces with Conical Singularities}, J. Geom. Anal. {\bf  1} (1991) 359-372,  MR1129348, Zbl 0739.58012.


\bibitem{CL2} W. Chen and C. Li, {\it Classification of solutions of
some nonlinear elliptic equations,} Duke Math. J. {\bf  63}  (1991), 615-623, MR1121147, Zbl 0768.35025.



\bibitem{CK} S. Chanillo and M. K. -H. Kiessling, {\it  Conformally
invariant systems of nonlinear PDE of Liouville type,}  Geom. Funct. Anal. {\bf 5} (1995) 924-947, MR1361515,  Zbl 0858.35035.


\bibitem{FH} T. Fukuda, and K, Hosomichi, {\it Super-Liouville theory with boundary}, Nucl. Phys, B {\bf 635} (2002) 215-254, MR1914157, Zbl 0996.81095.


\bibitem{JLW} J. Jost,  C. -S. Lin and G. Wang, {\it Analytic aspects of the Toda system. II. Bubbling behavior and existence of solutions},
Comm. Pure Appl. Math. {\bf 59} (2006), 526-558,  MR2199785, Zbl 1207.35140.

\bibitem{JWZ} J. Jost, G. Wang and C. Q. Zhou, {\it super-Liouville equations on closed Riemann
surface}, Comm. Partial Differ. Equations {\bf 32} (2007) 1103-1128, MR2353139,  Zbl 1210.35093.

\bibitem{JWZ1} J. Jost, G. Wang and C. Q. Zhou, {\it  Metrics of constant curvature
on a Riemann surface with two corners on the boundary}, Ann. I.H. Poincare-AN,  {\bf 26} (2009) 437-456, MR2504038,  Zbl 1181.53034.

\bibitem{JWZZ1} J. Jost, G. Wang, C. Q. Zhou and M. M. Zhu, {\it
Energy identities and blow-up analysis for solutions of the super
Liouville equation}, J. Math. Pures Appl. {\bf 92} (2009) 295-312,  MR2555181, Zbl 1176.35057.

\bibitem{JWZZ2} J. Jost, G. Wang, C. Q. Zhou and M. M. Zhu, {\it The boundary value problem for the super-Liouville equation},
Ann. Inst. H. Poincare Anal. Non Lineaire {\bf 31} (2014) 685-706, MR3249809, Zbl 1319.30028.

\bibitem{JZZ} J. Jost, C. Q. Zhou and M. M. Zhu, {\it The qualitative boundary behavior of blow-up solutions of the super-Liouville equations},  J. Math. Pures Appl. {\bf 101} (2014) 689-715, MR3192428, Zbl 1290.35047.

\bibitem{JZZ1} J. Jost, C. Q. Zhou and M. M. Zhu, {\it A local estimate for the super-Liouville equations on closed Riemann surfaces},
 Calc. Var. Partial Differential  Equations  {\bf 53} (2015) 247-264, MR3336319, Zbl 06440165.

\bibitem{LM} H. B. Lawson and M. Michelsohn, {\it Spin geometry, Princeton Mathematical Series 38}, Princeton University Press, Princeton (1989), MR1031992, Zbl 0688.57001.


\bibitem{LS} Y. Y. Li and I. Shafrir, {\it Blow-up analysis for solutions of $-\Delta u=Ve\sp u$ in dimension two,} Indiana Univ. Math. J. {\bf 43} (1994) 1255-1270, MR1322618, Zbl 0842.35011.

\bibitem{Li} Y. Y. Li, {\it Harnack type inequality: The method of moving planes}, Comm. Math. Phys. {\bf 200} (1999) 421-444,  MR1673972,  Zbl 0928.35057.

\bibitem{L} J. Liouville, {\it  Sur l'\' equation aux diff\' erences partielles
$\frac{d^2}{dudv}\log\lambda\pm \frac {\lambda}{2a^2}=0$,}
J. Math. Pures Appl. {\bf 18} (1853) 71.

\bibitem{Po2}  A. M. Polyakov, {\it Gauge fields and strings}, Harwood, 1987.

\bibitem{Pr} J. N. G. N. Prata, {\it  The super-Liouville equation on the half-line}, Phys. Lett. B {\bf 405} (1997) 271-279,  MR1461246.

\bibitem{SU} J. Sacks and K. Uhlenbeck, {\it  The existence of minimal immersions of 2-spheres}, Ann. Math. {\bf 113} (1981), 1-24, MR0604040, Zbl 0462.58014.

\bibitem{St} M. Struwe, {\it Variational methods. Applications to nonlinear partial differential equations and Hamiltonian systems}, Springer, Berlin, 4th ed., 2008,  MR2431434, Zbl 1284.49004.

\bibitem{Ta} G. Tarantello, {\it A quantization property for blow up solutions of singular Liouville-type equations}, J. Funct. Anal. {\bf 219} (2005) 368-399, MR2109257, Zbl 1174.35379.

\bibitem{T1} M. Troyanov, {\it Prescribing curvature on compact
surfaces with conical singularities}, Trans. Amer. Math. Soc.
{\bf 324} (1991) 793-821, MR1005085,  Zbl 0724.53023.

\bibitem{T2} M. Troyanov, {\it Metrics of constant curvature on a sphere with two conical singularities}, Lecture Notes in Math., {\bf 1410} (1989) 296-308, MR1034288, Zbl 0697.53037.

\bibitem{W} H. Wente, {\it Large solutions to the volume-constrained plateau problem}, Arch. Rat. Mech. Anal. {\bf 75} (1980) 59--77,  MR0592104, Zbl 0473.49029.

\bibitem{Z1} M. M. Zhu, {\em Dirac-harmonic maps from degenerating spin surfaces I: the Neveu-Schwarz case}, Calc. Var. Partial Differ. Equ. {\bf 35} (2009) 169-189,  MR2481821,  Zbl 1169.53034.

\bibitem{Z2} M. M. Zhu, {\em Quantization for a nonlinear Dirac equation}, Proc. Amer. Math. Soc. 144 (2016), no. 10, 4533-4544. MR3531200, Zbl 1354.58017.

\end{thebibliography}
\end{document}